\newtheorem{theorem}{Theorem}[section] 
\newtheorem{definition}[theorem] {Definition}
\newtheorem{lemma}[theorem]{Lemma}
\newtheorem{remark}[theorem]{Remark}
\newtheorem{proposition}[theorem]{Proposition}
\newtheorem{mainthm}{Theorem}
\newtheorem*{critpointspreserved}{Theorem~\ref{t:deckfixescrits}}
\theoremstyle{remark}
\newtheorem{example}[theorem]{Example}
\newcommand{\hatC}{\hat{\mathbb{C}}}
\DeclareMathOperator{\Deck}{Deck}
\DeclareMathOperator{\ran}{ran}
\newcommand{\Mob}{\textrm{M\"ob}}
 \newcommand{\Z}{\mathbb{Z}}
 \newcommand{\N}{\mathbb{N}}
\title{On the deck groups of iterates of bicritical rational maps}
\author{Sarah Koch}
\address{Department of Mathematics, University of Michigan, Ann Arbor, MI 48109,
U.S.A.}
\email{kochsc@umich.edu}
\author{Kathryn Lindsey}
\address{Department of Mathematics, Boston College, Chestnut Hill, MA 02467, U.S.A}
\email{kathryn.lindsey@bc.edu}
\author{Thomas Sharland}
\address{Department of Mathematics and Applied Mathematical Sciences, University of Rhode Island, RI 02881, U.S.A}
\email{tsharland@uri.edu}
\begin{document}

\begin{abstract} Given a rational map $f:\widehat{\mathbb C}\to\widehat{\mathbb C}$ on the Riemann sphere, we define $\Deck(f)$ to be the group of M\"obius transformations $\mu$ satisfying $f \circ \mu = f$. In this note, we consider the groups $\Deck(f^k)$, where $f$ is a \emph{bicritical} rational map (that is, a rational map with exactly two critical points) and $f^k$ denotes the $k$th iterate of $f$. In particular, we give a complete description of which groups (up to isomorphism) arise as the groups $\Deck(f^k)$ for bicritical rational maps $f$.
\end{abstract}

\maketitle

\section{Introduction}

Let $f:\hat{\mathbb{C}} \to \hat{\mathbb{C}}$ be a rational map on the Riemann sphere. We will denote the set of critical points (resp. critical values) of $f$ by $\mathcal{C}_f$ (resp. $\mathcal{V}_f$). A rational map $f$ is called bicritical if $|\mathcal{C}_f| = 2$. In addition, a bicritical rational map $f$ is called a power map if $\mathcal{C}_f = \mathcal{V}_f$. This is equivalent to the condition that $f$ is conjugate to $z \mapsto z^{\pm d}$ for some $d \geq 2$. 

In this note, we study particular groups of symmetries of the iterates of biciritical rational maps.  

\begin{definition}
The \emph{deck group} of a rational map $f:\hat{\mathbb{C}} \to \hat{\mathbb{C}}$ is the group 
\[
 \Deck(f) \coloneqq \{ \mu \in \Mob \mid f \circ \mu = f \},
\]
where $\Mob$ denotes the group of M\"obius transformations of the Riemann sphere $\hat{\mathbb{C}}$. 
\end{definition}

Deck groups were used in \cite{KLS1} as a tool for characterizing bicritical rational maps with shared iterates. Given a rational map $f$, the deck group $\Deck(f)$ is a finite subgroup of $\Mob$ (in fact, $|\Deck(f)| \leq \deg(f)$). It is well-known that a finite group of M\"obius transformations is isomorphic to either a cyclic group $\Z_n$ ($n \in \N$), a dihedral group $D_n$ ($n \in 2 \N$), or one of the \emph{polyhedral groups} $A_4$, $A_5$ or $S_4$ (see e.g \cite{Klein}).

The present work builds on the results of \cite{KLS1} to give a complete classification of which subgroups of the M\"obius group $\Mob$ are 
realized as deck groups of iterates of a bicritical rational map. Our main results are the following two theorems. 

\begin{mainthm}\label{t:odddegree}
 Let $f$ be a bicritical map of odd degree $d$. 
 \begin{enumerate}
\item Then $\Deck(f^k) \cong \mathbb{Z}_d$ for all $k \in \mathbb{N}$ if and only if $f$ is \emph{not} a power map. 
\item Furthermore, $f$ \emph{is} a power map if and only if there exists $k \in \mathbb{N}$ such that $\Deck(f^k) \cong \mathbb{Z}_n$ for some $n > d$.  
\end{enumerate}
 %Then $f$ is a power map if and only if there exists $k \in \mathbb{N}$ such that $\Deck(f^k)$ is a cyclic group of order greater than $d$. Furthermore, if $f$ is not a power map then $\Deck(f^k) \cong \mathbb{Z}_d$ for all $k \geq 1$.
\end{mainthm}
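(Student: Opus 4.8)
The plan is to put $f$ into a normal form, set up a recursion relating $\Deck(f^k)$ to $\Deck(f^{k-1})$, and then exploit the parity of $d$ to show that this recursion stabilizes precisely when $f$ is not a power map. First I would normalize. Since $f$ is bicritical of degree $d$, a Riemann--Hurwitz count shows each of its two critical points is totally ramified (local degree $d$); in particular the two critical values are distinct. Conjugating by a M\"obius transformation, I may assume the critical points are $0$ and $\infty$, and normalizing the critical values then gives $f = F \circ p_d$, where $p_d(z) = z^d$ and $F \in \Mob$. In this form $\Deck(f) = \mathbb{Z}_d$ is generated by $z \mapsto \omega z$ with $\omega^d = 1$, and $f$ is a power map if and only if $F$ preserves the set $\{0,\infty\}$ of branch values of $p_d$. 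Writing $f^k = F \circ p_d \circ f^{k-1}$ and using that $F$ is injective, one sees that $\mu \in \Deck(f^k)$ if and only if $f^{k-1} \circ \mu = m_\zeta \circ f^{k-1}$ for some $d$-th root of unity $\zeta$, where $m_\zeta(w) = \zeta w$. This defines a homomorphism $\Phi_k \colon \Deck(f^k) \to \mathbb{Z}_d$ whose kernel is $\Deck(f^{k-1})$, so that $\Deck(f^{k-1}) \trianglelefteq \Deck(f^k)$ with cyclic quotient of order dividing $d$. By Theorem~\ref{t:deckfixescrits} every element of $\Deck(f^k)$ fixes $0$ and $\infty$, so each $\Deck(f^k)$ is cyclic; this removes any dihedral or polyhedral possibilities from the outset.

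When $f$ is a power map, conjugacy to $z \mapsto z^{\pm d}$ gives $\Deck(f^k) \cong \mathbb{Z}_{d^k}$ directly, and since $d \ge 3$ we have $d^k > d$ for $k \ge 2$; this yields the forward direction of part (2) and shows a power map fails the condition in part (1). The whole content of the theorem is therefore the claim that when $f$ is \emph{not} a power map, $\Phi_k$ is trivial for every $k$, i.e.\ $\Deck(f^k) = \Deck(f^{k-1}) = \cdots = \mathbb{Z}_d$.

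To prove this, suppose $\mu \in \Deck(f^k)$ with $\Phi_k(\mu) = m_\zeta$. Peeling off one factor of $f$ on the left and cancelling the outer $F$ turns the relation into $G \circ \mu = A_1 \circ G$, where $G = p_d \circ f^{k-2}$ and $A_1 = F^{-1} m_\zeta F$. The key point is to pin down a distinguished pair of points preserved by $A_1$. I would consider the set $T(G)$ of values all of whose $G$-preimages are ramified to order divisible by $d$. One checks $\{0,\infty\} \subseteq T(G)$, since these are the branch values of the outer $p_d$, while a Riemann--Hurwitz estimate (using $d \ge 3$) rules out any other point of $T(G)$; hence $T(G) = \{0,\infty\}$. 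As $T$ is intrinsic to $G$ as a branched cover, the relation $G \circ \mu = A_1 \circ G$ forces $A_1(\{0,\infty\}) = \{0,\infty\}$, equivalently $m_\zeta$ permutes the critical value set $\{F(0), F(\infty)\}$ of $f$. Now parity enters: since $m_\zeta$ fixes $0$ and $\infty$ and $d$ is odd, $m_\zeta$ cannot swap two distinct points, so it must fix each of $F(0), F(\infty)$; as $\zeta^d = 1$ this forces $\zeta = 1$ unless both critical values lie in $\{0,\infty\}$, that is, unless $f$ is a power map. Thus for non-power $f$ we get $\zeta = 1$, so $\mu \in \ker \Phi_k = \Deck(f^{k-1})$, and induction gives $\Deck(f^k) = \mathbb{Z}_d$ for all $k$. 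Combining the two cases proves both parts: part (1) is the equivalence just established, and part (2) follows because by part (1) a non-power map can never achieve $\mathbb{Z}_n$ with $n > d$.

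The main obstacle is the descent step, namely showing that the semiconjugacy $f^{k-1} \circ \mu = m_\zeta \circ f^{k-1}$ forces $m_\zeta$ to preserve the first-generation critical value pair $\{F(0), F(\infty)\}$ for \emph{every} $k$, not merely $k = 2$. The difficulty is that $f^{k-1}$ has a proliferating set of critical values, and one must isolate the correct two-point set among them; this is exactly what the invariant $T(G)$ together with the Riemann--Hurwitz bound accomplishes, with Theorem~\ref{t:deckfixescrits} supplying cyclicity so that only the rotational elements need to be analyzed.
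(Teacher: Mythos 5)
Your proposal is correct, but it takes a genuinely different route from the paper's. The paper deduces Theorem~\ref{t:odddegree} from its general machinery: Theorem~\ref{t:deckfixescrits} feeds into Proposition~\ref{p:keyprop} and Lemma~\ref{l:deckincreases}, and the argument then splits into the cases $|\mathcal{C}_f \cup \mathcal{V}_f| = 3$ (Lemma~\ref{l:CfcapVfequals3}) and $|\mathcal{C}_f \cup \mathcal{V}_f| = 4$, where the only M\"obius maps preserving both $\mathcal{C}_f$ and $\mathcal{V}_f$ are involutions (Lemma~\ref{l:fourMobius}); since $\Z_d$ with $d$ odd contains no involution, Lemma~\ref{l:deckincreases} gives $\Deck(f^2) = \Deck(f)$, and Lemma~\ref{l:eventuallyconstant} stabilizes the chain. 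You instead work in the normal form $f = F \circ p_d$ and build the homomorphism $\Phi_k \colon \Deck(f^k) \to \Z_d$ with kernel $\Deck(f^{k-1})$ directly (this parallels the paper's Proposition~\ref{p:Sarah1} and Lemma~\ref{l:decksizes}, but your derivation --- cancel the injective $F$ and note that a rational function taking values in the $d$-th roots of unity is constant --- needs no input from Theorem~\ref{t:deckfixescrits}); you then replace Lemma~\ref{l:deckincreases} by the ramification invariant $T(G)$, where Riemann--Hurwitz gives $|T(G)| < 2d/(d-1) \leq 3$ for $d \geq 3$, so $T(G) = \{0,\infty\}$ and invariance of $T$ under the semiconjugacy forces $m_\zeta(\mathcal{V}_f) = \mathcal{V}_f$. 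The parity endgame is the same in both proofs: an element of odd order cannot transpose two distinct points, and fixing both critical values pointwise forces either $\zeta = 1$ or $\mathcal{C}_f = \mathcal{V}_f$. Your route buys self-containedness for odd degree --- it bypasses Theorem~\ref{t:deckfixescrits}, the case analysis on $|\mathcal{C}_f \cup \mathcal{V}_f|$, and the stabilization lemma, and it makes transparent why $d = 2$ is genuinely harder (there your Riemann--Hurwitz bound only yields $|T(G)| \leq 3$); the paper's route buys economy across the whole paper, since Proposition~\ref{p:keyprop}, Lemma~\ref{l:deckincreases}, and Lemma~\ref{l:eventuallyconstant} are reused to prove the even-degree Theorem~\ref{mthm}. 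One blemish, though a harmless one: your claim that Theorem~\ref{t:deckfixescrits} makes each $\Deck(f^k)$ cyclic is a non sequitur, since setwise preservation of $\mathcal{C}_f = \{0,\infty\}$ also permits maps of the form $z \mapsto a/z$ and hence dihedral groups; fortunately nothing in your argument uses this remark, and cyclicity emerges at the end anyway because you prove $\Deck(f^k) = \Deck(f) \cong \Z_d$.
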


\begin{mainthm}\label{mthm} 
 Let $f$ be a bicritical map of even degree $d$. 
 \begin{enumerate}
 \item For each $k \in \mathbb{N}$, $\Deck(f^k)$ is isomorphic to either $D_{2d}$, $D_{4d}$, or $\mathbb{Z}_{d^n}$ for some $n \geq 1$.
\item Furthermore, if $f$ is not a power map then $|\Deck(f^k)| \leq 4d$.
 \end{enumerate}
\end{mainthm}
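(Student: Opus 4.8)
The plan is to reduce the whole problem to a single, highly structured family of maps. Since $f$ is bicritical of degree $d$, the Riemann--Hurwitz formula $\sum_c(\deg_c f-1)=2d-2$ forces \emph{both} critical points to be totally ramified, so after pre- and post-composing with M\"obius transformations we may write $f=A\circ p_d\circ B$, where $p_d(z)=z^d$ and $A,B\in\Mob$. Iterating gives $f^k=A\circ g_k\circ B$ with $g_k=p_d\,(Mp_d)^{k-1}$ and $M=B\circ A$, and since $\Deck(f^k)=B^{-1}\Deck(g_k)B$ it suffices to classify $\Deck(g_k)$. A direct check shows that $f$ is a power map exactly when $M$ preserves $\{0,\infty\}$ setwise; this is the dividing line between the two regimes in the statement.

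Next I would peel off the innermost factor $p_d$. Writing $g_k=F\circ p_d$ with $F=(p_dM)^{k-1}=g_{k-1}\circ M$, the relation $g_k\circ\mu=g_k$ holds iff $\tilde g\circ\mu=R_\zeta\circ\tilde g$ for some $\zeta$ with $\zeta^d=1$, where $\tilde g=M\circ g_{k-1}$ and $R_\zeta(w)=\zeta w$; equivalently $g_{k-1}\circ\mu=N_\zeta\circ g_{k-1}$ with $N_\zeta=M^{-1}R_\zeta M$. This produces an exact sequence
\[
 1 \longrightarrow \Deck(g_{k-1}) \longrightarrow \Deck(g_k) \xrightarrow{\ \rho\ } \mathbb{Z}_d ,
\]
so $\Deck(g_{k-1})$ is normal in $\Deck(g_k)$ with cyclic quotient, and the entire question becomes: which rotations $R_\zeta$ lift through $g_{k-1}$, i.e.\ what is the image of $\rho$?

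The heart of the argument is determining this image. The base case $\Deck(g_1)=\Deck(p_d)\cong\mathbb{Z}_d$ (the rotations $z\mapsto\zeta z$, which persist in every $\Deck(g_k)$) is immediate. For the inductive step I would use that a liftable $N_\zeta$ must permute the critical values of $g_{k-1}$; since $N_\zeta$ is conjugate to $R_\zeta$ and, in the non-power case, $M$ does not preserve $\{0,\infty\}$, this confines the admissible $\zeta$ to $\{1\}$ always and, when $d$ is even, to $\zeta=-1$ (the mechanism being already visible at $k=2$, where $N_\zeta$ must preserve $\{0,\infty\}$, forcing $R_\zeta$ to \emph{swap} $\{M(0),M(\infty)\}$, possible only for $\zeta=-1$). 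This is precisely the parity phenomenon that, for odd $d$, leaves only $\zeta=1$ and recovers Theorem~\ref{t:odddegree}. When $\zeta=-1$ is admissible the corresponding $\mu$ interchanges $0$ and $\infty$, so $\Deck(g_k)$ acquires a reflection and, together with the persistent $\mathbb{Z}_d$, becomes $D_{2d}$. The final enhancement to $D_{4d}$ comes from a rotation of order $2d$; I would show such a rotation lies in $\Deck(g_k)$ iff $R_{-1}\in\Deck\big((p_dM)^{k-1}\big)=M^{-1}\Deck(g_{k-1})M$, which by the inductive description occurs exactly when a swap is already present one level down. Because the chain of deck groups is nested, such a swap then also lies in $\Deck(g_k)$: this simultaneously caps the rotation subgroup at order $2d$ and forces any order-$2d$ rotation to be accompanied by a reflection, so $\mathbb{Z}_{2d}$ never occurs for $d>2$ and the non-power possibilities are exactly $\mathbb{Z}_d$, $D_{2d}$, $D_{4d}$, all of order at most $4d$. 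In the power-map case $M$ preserves $\{0,\infty\}$, every $N_\zeta$ lifts, $\rho$ is surjective at each stage, and one recovers $\Deck(g_k)\cong\mathbb{Z}_{d^k}$, giving the cyclic groups $\mathbb{Z}_{d^n}$ of unbounded order.

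The step I expect to be the main obstacle is proving that $\Deck(g_k)$ preserves $\{0,\infty\}$ setwise for non-power maps --- equivalently, that no deck transformation carries the two ``persistent'' critical points (those coming from the innermost $p_d$, which are critical for every $g_j$) onto the later-born critical points of $g_k$. This is what lets one speak cleanly of a rotation subgroup and reflections, excludes the polyhedral groups $A_4,S_4,A_5$ uniformly rather than case-by-case, and underlies the propagation estimate bounding the rotation order by $2d$. I would attack it by distinguishing $\{0,\infty\}$ dynamically through the post-critical combinatorics and the persistence just noted, using that a non-power $M$ breaks the symmetry one would need to identify $\{0,\infty\}$ with any other critical points, and then feeding this back into the recursion above. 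The small even degrees $d\in\{2,4\}$, where $\mathbb{Z}_d$ alone does not rule out the polyhedral groups, would be verified directly.
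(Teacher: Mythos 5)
Your reduction to the normal form $g_k=p_d\circ(M\circ p_d)^{k-1}$ with $M=B\circ A$ is correct, and the exact sequence $1\to\Deck(g_{k-1})\to\Deck(g_k)\xrightarrow{\ \rho\ }\Z_d$ is an attractive repackaging of the paper's machinery: because you peel off the \emph{innermost} $p_d$, the descent is automatic (the ratio $(\tilde g\circ\mu)/\tilde g$ is a continuous map into the $d$-th roots of unity, hence constant), whereas the paper's corresponding step (Proposition~\ref{p:keyprop}, used in Lemma~\ref{l:decksizes}) pushes deck transformations forward through $f$ and therefore can only be run \emph{after} knowing that they preserve $\mathcal{C}_f$. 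Your $k=2$ analysis and the dichotomy ``$f$ is a power map iff $M$ preserves $\{0,\infty\}$'' are also correct, and your intended conclusion (non-power possibilities exactly $\Z_d$, $D_{2d}$, $D_{4d}$) matches what the paper's proof actually yields.

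However, there is a genuine gap, and it is exactly the statement you flag as the ``main obstacle'': that for non-power $f$ every element of $\Deck(g_k)$ preserves $\{0,\infty\}$ setwise. This is precisely Theorem~\ref{t:deckfixescrits} conjugated by $B$, and it is the technical heart of the paper, not a deferrable step: your restriction of the image of $\rho$ to $\{\pm 1\}$ for $k>2$, the cap of the rotation subgroup at order $2d$, the reflection forced to accompany an order-$2d$ rotation, and the ``uniform'' exclusion of $A_4$, $S_4$, $A_5$ are all conditional on it. Your proposed attack (``post-critical combinatorics,'' ``a non-power $M$ breaks the symmetry,'' ``verify $d\in\{2,4\}$ directly'') contains no actual mechanism. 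The difficulty is concrete: an element $\mu\in\Deck(g_k)$ is only known to preserve local degrees of $g_k$ itself, and when critical orbits collide, $g_k$ acquires critical points other than $0,\infty$ with the \emph{same} local degree, so nothing a priori singles out $\{0,\infty\}$ inside $\mathcal{C}_{g_k}$ in a $\mu$-invariant way. For even $d\geq 4$ one can sidestep this using the paper's Theorem~\ref{t:cyclicOrDihedral} (whose fiber-counting proof is independent of preservation) together with uniqueness of the order-$d$ cyclic subgroup of a dihedral group, as in Lemma~\ref{l:higherdegreefixescrits}; but for $d=2$ --- an even degree squarely inside the scope of the theorem --- there is no such shortcut, and the paper has to prove that a dihedral deck group forces $f$ to be critically coalescing (Lemma~\ref{l:dihedralimpliescritcoal}), that an element of order $>2$ produces a point of local degree $>2$ (Lemma~\ref{l:localdegree}), hence a periodic critical point with controlled return structure (Lemma~\ref{l:periodiccritpoint}), and then run the orbit-by-orbit local degree count of Proposition~\ref{p:quadraticdeckfixescrits}. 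The quadratic case is the hardest part of the whole argument, not a finite check, and until you supply a proof of the preservation statement your construction establishes little beyond the crude bound $|\Deck(g_k)|\leq d^k$ coming from the exact sequence.
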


Additionally, in Proposition~\ref{p:examples} we give examples showing that when $d$ is even, each of the groups $D_{2d}$ and $D_{4d}$ are realized as $\Deck(f^k)$ for some degree $d$ bicritical rational map $f$ and some $1 \leq k \leq 3$, making the result of Theorem~\ref{mthm} sharp. A key step toward proving Theorems \ref{t:odddegree} and \ref{mthm} is the following result. 

\begin{theorem}\label{t:deckfixescrits}
 Let $f$ be a bicritical rational map and $\phi \in \Deck(f^k)$ for some $k$. Then $\phi(\mathcal{C}_f) = \mathcal{C}_f$.
 \end{theorem}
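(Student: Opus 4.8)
The plan is to combine an elementary local-degree computation, which shows that any element of $\Deck(f^k)$ stabilizes the critical set of $f^k$, with an inductive ``peeling'' argument that pins down $\mathcal{C}_f$ inside $\mathcal{C}_{f^k}$. First I would record the rigidity forced by bicriticality: Riemann--Hurwitz shows that both critical points of $f$ have local degree exactly $d$, so $f$ is totally ramified over each of its two (necessarily distinct) critical values, and after a Möbius change of coordinates we may write $f=g\circ p$ with $p(z)=z^d$. Then $\mathcal{C}_f=\{0,\infty\}$ and $\Deck(f)=\{z\mapsto\zeta z:\zeta^d=1\}$ is the cyclic group of rotations fixing the pair $\mathcal{C}_f$. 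The chain rule gives $\deg_z(f^k)=d^{N(z)}$, where $N(z)$ counts the visits of $z,f(z),\dots,f^{k-1}(z)$ to $\mathcal{C}_f$; since $\phi$ is a local biholomorphism and $f^k\circ\phi=f^k$, we get $\deg_{\phi(z)}(f^k)=\deg_z(f^k)$ for all $z$, so $\phi$ preserves the grading $N$ and in particular $\phi(\mathcal{C}_{f^k})=\mathcal{C}_{f^k}$.

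The key reduction is a descent relation. Writing $f^k=g\circ p\circ f^{k-1}$ and cancelling the injective factor $g$ in $f^k\circ\phi=f^k$ yields $p\circ(f^{k-1}\circ\phi)=p\circ f^{k-1}$; a connectedness argument (the quotient of the two maps is a continuous, hence constant, $d$-th root of unity) then forces
\[
 f^{k-1}\circ\phi=\gamma\circ f^{k-1}\qquad\text{for some }\gamma\in\Deck(f).
\]
Thus $\phi$ is a Möbius lift through $f^{k-1}$ of a deck transformation of $f$. I would now induct on $k$, the base case $k=1$ being immediate since $\Deck(f)$ fixes $\mathcal{C}_f$. For the inductive step the displayed relation lets one verify directly that $\phi$ normalizes $H:=\Deck(f^{k-1})$: for $\mu\in H$ one computes $f^{k-1}\circ(\phi\mu\phi^{-1})=\gamma\circ f^{k-1}\circ\phi^{-1}=\gamma\gamma^{-1}\circ f^{k-1}=f^{k-1}$. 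By the inductive hypothesis $H$ stabilizes $\mathcal{C}_f$, so $H$ is a finite subgroup of the stabilizer of $\{0,\infty\}$; its rotation subgroup $R=H\cap\{z\mapsto az\}$ is cyclic, contains $\Deck(f)$, and has $\mathrm{Fix}(R)=\mathcal{C}_f$. Since $\phi$ normalizes $H$ and $R$ is the rotation subgroup, $\phi$ normalizes $R$ and hence preserves $\mathrm{Fix}(R)=\mathcal{C}_f$, completing the step.

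The hard part is the final assertion that $R$ is intrinsically pinned down inside $H$. When $d\ge 3$ the group $R$ has order at least $3$ and is automatically the characteristic cyclic subgroup, so the argument closes; the only genuine obstruction is the degenerate case $d=2$ with $H\cong\mathbb{Z}_2\times\mathbb{Z}_2$, where $H$ stabilizes three symmetric pairs of points and abstract group theory cannot distinguish $\mathcal{C}_f$ from the other two (one must also rule out a normal Klein four-group sitting inside an $A_4$ or $S_4$). I expect this to be the main obstacle.

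To resolve it I would return to the arithmetic carried by the grading $N$. The descent relation shows that $\phi$ preserves the local degrees of \emph{both} $f^{k}$ and $f^{k-1}$, hence preserves $N$ at levels $k$ and $k-1$ simultaneously. The two exceptional pairs stabilized by $H$ are \emph{not} critical points of $f$, so along their orbits the first ramified visit to $\mathcal{C}_f$ occurs strictly later than for the points of $\mathcal{C}_f$ themselves; comparing the values of $N$ at the two consecutive levels therefore assigns $\mathcal{C}_f$ a different ramification profile from the spurious pairs. Since $\phi$ must preserve this profile, it cannot interchange $\mathcal{C}_f$ with another stabilized pair, and the degenerate case is eliminated. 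Verifying that this comparison indeed separates $\mathcal{C}_f$ in every instance of the $\mathbb{Z}_2\times\mathbb{Z}_2$ configuration is the crux of the argument.
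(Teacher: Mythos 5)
Your degree $d \geq 3$ argument is essentially complete, and your route to it is genuinely different from the paper's. The descent relation $f^{k-1}\circ\phi=\gamma\circ f^{k-1}$ with $\gamma\in\Deck(f)$ — obtained by normalizing $\mathcal{C}_f=\{0,\infty\}$, writing $f=g\circ p$ with $p(z)=z^d$ and $g$ M\"obius, cancelling the injective factor $g$, and noting that $\bigl(f^{k-1}\circ\phi\bigr)/f^{k-1}$ is a rational function whose $d$-th power is identically $1$, hence constant — is correct, and it is an improvement in one respect: the paper only obtains this statement (Proposition~\ref{p:Sarah1}) as a \emph{consequence} of Theorem~\ref{t:deckfixescrits}, whereas you get it unconditionally. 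Your normalizer computation is also correct, and once the rotation subgroup $R$ of $H=\Deck(f^{k-1})$ has order at least $3$, it is indeed characteristic in $H$ (it is the subgroup generated by all elements of order $\geq 3$), so conjugation by $\phi$ preserves $R$ and hence $\phi$ preserves $\mathrm{Fix}(R)=\mathcal{C}_f$. So your induction closes for every $d\geq 3$, where the paper instead uses Theorem~\ref{t:cyclicOrDihedral} together with uniqueness of large cyclic subgroups of dihedral groups.

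The gap is exactly where you flag it, and your proposed repair does not close it. When $d=2$ and $H\cong V_4$, conjugation by $\phi$ can a priori be any automorphism of $V_4$, permuting the three involutions and hence the three stabilized pairs, so the obstruction must be removed analytically, not group-theoretically. But preservation of the grading $N$ at levels $k$ and $k-1$ cannot do this: those two integers record only the number of visits of the orbit of $z$ to $\mathcal{C}_f$ in the windows $[0,k)$ and $[0,k-1)$, together with their difference (i.e.\ whether $f^{k-1}(z)\in\mathcal{C}_f$); they carry no information about \emph{when} the first visit occurs. If $a$ lies in a spurious pair and its orbit visits $\mathcal{C}_f$ exactly as often in those windows as the orbit of the critical point $0$ does (say each exactly once, the visit of $a$ occurring at some positive time), your invariant takes identical values at $0$ and at $a$ and is blind to the swap. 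What the paper actually does in this case is substantially heavier: first, a dihedral deck group forces $f$ to be critically coalescing (Lemma~\ref{l:dihedralimpliescritcoal}); second, a deck element of order $>2$ forces one critical point $c_1$ to be periodic, with $c_2$ strictly preperiodic onto it and with returns to $c_1$ occurring only at multiples of the period (Lemmas~\ref{l:localdegree} and~\ref{l:periodiccritpoint}); third — the step your two-level comparison misses — since $\Deck(f^k)\subseteq\Deck(f^m)$ for all $m\geq k$, the map $\phi$ preserves local degrees at \emph{every} level $m\geq k$, so $\deg_{f^m}(\phi(c_1))=\deg_{f^m}(c_1)\to\infty$; this forces the orbit of $\phi(c_1)$ to meet critical points infinitely often, hence to enter the critical cycle, and a counting argument against the orbit structure of Lemma~\ref{l:periodiccritpoint} then shows $\phi(c_1)$ is itself a critical point (Proposition~\ref{p:quadraticdeckfixescrits}). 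Without these ingredients — critical coalescence, the periodic critical point, and unboundedly many levels rather than two — the $V_4$ configuration remains open, so your proof is incomplete precisely in the quadratic case, which is where the paper spends nearly all of Section~\ref{s:deckfixescrits}.
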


In \cite{Sym}, Pakovich studies the groups\footnote{Pakovich uses the notation $\Sigma(f^k)$ for $\Deck(f^k)$ and $\Sigma_\infty(f)$ for $\Deck_\infty(f)$.} $\Deck_\infty(f) \coloneqq \bigcup_{k=1}^\infty \Deck(f^k)$ for rational maps $f$. He shows that, if $f$ is not a power map, then $| \Deck_\infty(f) |$ is bounded, and this bound depends only on the degree $d$ of the map $f$. A study of rational maps of minimal degree with a given deck group\footnote{In \cite{HalfSymmetries}, elements of the deck group were called \emph{half-symmetries}.} was carried out in \cite{HalfSymmetries}. 

 This paper is structured as follows. In Section~\ref{s:background} we give the required background on deck groups and M\"obius transformations, as well as give sketch proofs of some important results from \cite{KLS1}. In Section~\ref{s:deckfixescrits}, we give a proof of Theorem~\ref{t:deckfixescrits}; the main difficulty is to prove the result in the quadratic case. After this, in Section~\ref{s:consequences} we will see that Theorem~\ref{t:deckfixescrits} implies a number of important results about deck groups of bicritical rational maps. In Section~\ref{s:specialmobius}, given a bicritical rational map $f$, we study the M\"obius transformations $\mu$ which satisfy $\mu(\mathcal{C}_f) = \mathcal{C}_f$ and $\mu(\mathcal{V}_f) = \mathcal{V}_f$. The results of Sections~\ref{s:consequences} and \ref{s:specialmobius} will then allow us to prove Theorems~\ref{t:odddegree} and \ref{mthm} in Section~\ref{s:proofs}.

\section{Background}\label{s:background}

For background on the dynamics of bicritical rational maps, we refer the reader to \cite{MilnorBicritical}. It can easily be verified that a bicritical rational map necessarily satisfies $|\mathcal{V}_f| =2$. By putting the critical points at $0$ and $\infty$, any bicritical rational map of degree $d$ is conjugate to a map of the form $z \mapsto \frac{\alpha z^d+\beta}{\gamma z^d+\delta}$. For such a map, the deck group is clearly generated by the map $z \mapsto e^{2 \pi i /d} z$. Thus if $f$ is bicritical of degree $d$, we have $\Deck(f) \cong \Z_d$.

Given a M\"obius transformation $\phi \colon \hatC \to \hatC$, denote by $\mathrm{Fix}(\phi)$ its set of fixed points. A non-identity M\"obius transformation  of finite order satisfies $|\mathrm{Fix}(\phi)| = 2$.   The following lemma collects together some standard properties of deck groups. We denote by $\deg_f(z)$ the \emph{local degree} of $f$ at the point $z \in \hatC$.

\begin{lemma}\label{l:standardfacts}
Let $f$ be a rational map of degree $d \geq 1$.
\begin{enumerate}
 \item The group $\Deck(f)$ is finite, and so must be cyclic, dihedral, or isomorphic to one of the polyhedral groups $A_4$, $A_5$ or $S_4$. Furthermore $|\Deck(f)| \leq d$.
 \item Any non-identity element of $\Deck(f)$ has exactly two fixed points.
 \item If $z \in \hatC$ and $\phi \in \Deck(f)$, then $\deg_f(z) = \deg_f(\phi(z))$.
 \item For all $k \geq 1$, $\Deck(f^{k}) \subseteq \Deck(f^{k+1})$.
 \item If $f$ is bicritical of degree $d$, then $\Deck(f) \cong \mathbb{Z}_d$. Furthermore, each non-identity $\phi \in \Deck(f)$ has $\mathrm{Fix}(\phi) = \mathcal{C}_f$.
\end{enumerate}

\end{lemma}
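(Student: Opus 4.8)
The plan is to verify the five items in turn, treating them as essentially independent; items (2)--(4) are short, while (1) and (5) carry the real content. For (4), I would simply compute: if $\mu \in \Deck(f^k)$ then $f^{k+1}\circ\mu = f\circ(f^k\circ\mu) = f\circ f^k = f^{k+1}$, so $\mu \in \Deck(f^{k+1})$. For (3), I would invoke the multiplicativity of the local degree under composition, $\deg_z(f\circ\phi) = \deg_{\phi(z)}(f)\cdot\deg_z(\phi)$; since $\phi$ is a M\"obius transformation we have $\deg_z(\phi)=1$, and since $f\circ\phi = f$ the left-hand side equals $\deg_z(f)$, giving $\deg_z(f) = \deg_{\phi(z)}(f)$. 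For (2), I would use that $\Deck(f)$ is finite (from (1)), so every non-identity $\phi$ has finite order and is therefore elliptic; conjugating $\phi$ to a rotation $z\mapsto\lambda z$ with $\lambda$ a root of unity exhibits its exactly two fixed points, while the only M\"obius transformations with a single fixed point are parabolic and these have infinite order, hence are excluded.

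For (1), finiteness comes from the fact that each $\mu\in\Deck(f)$ permutes every fiber $f^{-1}(w)$, since $f\circ\mu = f$. Taking $w$ to be a regular value, the fiber $f^{-1}(w)$ has $d$ points, and when $d\ge 3$ the homomorphism from $\Deck(f)$ to the symmetric group on $f^{-1}(w)$ has trivial kernel (a M\"obius transformation fixing three points is the identity), so $\Deck(f)$ embeds in a finite group; the cases $d\le 2$ are handled directly. This realizes $\Deck(f)$ as a finite subgroup of $\Mob$, and the cited classification of such subgroups yields the list of isomorphism types. For the bound $|\Deck(f)|\le d$ I would pass to the quotient $\pi\colon\hatC\to\hatC/\Deck(f)$: since $f$ is constant on $\Deck(f)$-orbits it descends to a nonconstant holomorphic map $g$ with $f = g\circ\pi$, and comparing degrees gives $d = |\Deck(f)|\cdot\deg(g)\ge|\Deck(f)|$.

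For (5), I would first apply Riemann--Hurwitz: with exactly two critical points $c_1,c_2$, the relation $\sum_z(\deg_z(f)-1) = 2d-2$ together with $\deg_{c_i}(f)\le d$ forces $\deg_{c_1}(f)=\deg_{c_2}(f)=d$, so both critical points are totally ramified. I would then normalize, being careful about how the deck group transforms: conjugating $f$ moves $\mathcal{C}_f$ to $\{0,\infty\}$ and conjugates $\Deck(f)$ (preserving its isomorphism type), while post-composing by a M\"obius transformation moves the two distinct critical values to $\{0,\infty\}$ and leaves $\Deck(f)$ unchanged. Total ramification then forces $f^{-1}(0)=\{0\}$ and $f^{-1}(\infty)=\{\infty\}$, so $f$ takes the form $z\mapsto cz^{\pm d}$; a direct computation gives $\Deck(z\mapsto cz^{\pm d}) = \{z\mapsto\lambda z : \lambda^d=1\}\cong\mathbb{Z}_d$, and each nontrivial such element fixes exactly $\{0,\infty\}=\mathcal{C}_f$. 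That no deck element swaps $c_1$ and $c_2$ follows from (3), which shows $\Deck(f)$ permutes $\mathcal{C}_f$, together with $f(c_1)\ne f(c_2)$.

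The step I expect to be the main obstacle is (5): the delicate point is tracking the deck group through the two normalizations (it is genuinely invariant under post-composition but only invariant up to conjugacy under pre-composition) and ruling out the coordinate swap of the two critical points, which is exactly where the distinctness of the critical values is essential. As an alternative, the identification $\Deck(f)\cong\mathbb{Z}_d$ in (5) may simply be cited from Milnor's work on bicritical maps.
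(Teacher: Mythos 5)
Your proof is correct, but there is nothing in the paper to compare it against: the paper states this lemma without proof, presenting it as a collection of standard facts and citing Klein for the classification of finite subgroups of $\Mob$ and Milnor's paper on bicritical maps for item (5). Your write-up is therefore strictly more self-contained than the source, and the arguments you chose are the standard ones; they all check out. In (1), the fiber-permutation argument does need the caveat you flag (a M\"obius transformation is only pinned down by three points, so for $d\le 2$ one should act on, say, the union of two regular fibers), and your quotient argument $f = g\circ\pi$ in fact yields the stronger divisibility statement $|\Deck(f)|$ divides $d$, not just $|\Deck(f)|\le d$. A slicker route to the same bound, avoiding the quotient surface: the non-identity elements of $\Deck(f)$ have only finitely many fixed points in total, so a generic regular fiber is acted on freely, and its $d$ points decompose into orbits of size exactly $|\Deck(f)|$. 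In (5), your attention to the two normalizations is exactly the delicate point: post-composition satisfies $\Deck(\psi\circ f)=\Deck(f)$ on the nose, while conjugation gives $\Deck(\psi\circ f\circ\psi^{-1})=\psi\,\Deck(f)\,\psi^{-1}$ and carries $\mathcal{C}_f$ and fixed-point sets along, so the conclusion transfers back to $f$ correctly. One loose end worth closing explicitly: you invoke $f(c_1)\ne f(c_2)$ to rule out a deck element swapping the critical points, and this itself follows from the total ramification you established via Riemann--Hurwitz, since $f(c_1)=f(c_2)$ would give that common value at least $2d$ preimages counted with multiplicity. (The paper records $|\mathcal{V}_f|=2$ as a foundational fact for the same reason.)
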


We will make use of the following classical characterization of commuting M\"obius transformations.

\begin{lemma}[\cite{Beardon:Groups}, Theorem 4.3.6] \label{l:Beardon}
 Let $\phi$ and $\mu$ be non-identity M\"obius transformations.
 % with fixed point sets $\mathrm{Fix}(\phi)$ and $\mathrm{Fix}(\mu)$ respectively. 
 Then the following are equivalent.
 \begin{enumerate}
  \item $\phi \circ \mu = \mu \circ \phi$
  \item $\phi(\mathrm{Fix}(\mu)) = \mathrm{Fix}(\mu)$ and $\mu(\mathrm{Fix}(\phi)) = \mathrm{Fix}(\phi)$.
  \item Either 
  \begin{enumerate}
\item   $\mathrm{Fix}(\mu) =\mathrm{Fix}(\phi)$, or 
\item $\phi$, $\mu$ and $\phi \circ \mu$ are involutions and $\mathrm{Fix}(\phi) \cap \mathrm{Fix}(\mu) = \varnothing$. 
\end{enumerate}
 \end{enumerate}
\end{lemma}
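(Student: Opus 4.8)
The plan is to establish the three conditions as equivalent by proving the cycle $(1) \Rightarrow (2) \Rightarrow (3) \Rightarrow (1)$, using throughout the standard fact that a non-identity M\"obius transformation fixes either exactly one point (the parabolic case) or exactly two points. For $(1) \Rightarrow (2)$, suppose $\phi \circ \mu = \mu \circ \phi$ and let $p \in \mathrm{Fix}(\mu)$. Then $\mu(\phi(p)) = \phi(\mu(p)) = \phi(p)$, so $\phi(p) \in \mathrm{Fix}(\mu)$; since $\phi$ is injective and $\mathrm{Fix}(\mu)$ is finite, $\phi$ restricts to a bijection of $\mathrm{Fix}(\mu)$, giving $\phi(\mathrm{Fix}(\mu)) = \mathrm{Fix}(\mu)$, and the symmetric argument gives $\mu(\mathrm{Fix}(\phi)) = \mathrm{Fix}(\phi)$.

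The implication $(2) \Rightarrow (3)$ is where the real work lies, and I would organize it by the cardinalities of the two fixed-point sets. First I would dispose of the parabolic cases: if $\mu$ is parabolic with $\mathrm{Fix}(\mu) = \{q\}$, then $(2)$ forces $\phi(q) = q$, so $q \in \mathrm{Fix}(\phi)$, and if $\phi$ had a second fixed point $p' \neq q$ then, since $\mu$ permutes $\mathrm{Fix}(\phi)$ and fixes $q$, it would fix $p'$ as well, contradicting that $\mu$ is parabolic; hence $\mathrm{Fix}(\phi) = \{q\} = \mathrm{Fix}(\mu)$, which is case $(3a)$. When both maps have two fixed points, $(2)$ makes $\phi$ permute the two-point set $\mathrm{Fix}(\mu)$ and $\mu$ permute $\mathrm{Fix}(\phi)$, and I would show the two sets must either coincide or be disjoint: if they shared exactly one point $r$, then $\mu$ fixes $r$ and permutes $\mathrm{Fix}(\phi) = \{r, p_2\}$, hence also fixes $p_2$, forcing $p_2 \in \mathrm{Fix}(\mu)$ and thus equality of the sets, a contradiction. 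The coincident case is $(3a)$. In the disjoint case $\phi$ permutes $\mathrm{Fix}(\mu)$ while fixing neither point, so it swaps them, and likewise $\mu$ swaps $\mathrm{Fix}(\phi)$; normalizing $\mathrm{Fix}(\phi) = \{0, \infty\}$ gives $\phi(z) = \lambda z$, and the swapping condition forces $\lambda^2 = 1$, so $\phi(z) = -z$ is an involution. Since $\mu$ interchanges $0$ and $\infty$ it has the form $\mu(z) = c/z$, which is an involution, and a one-line computation gives $\phi \circ \mu(z) = -c/z$ with $(\phi \circ \mu)^2 = \mathrm{id}$; this is precisely case $(3b)$.

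For $(3) \Rightarrow (1)$ the two cases are short. In case $(3a)$ I would conjugate so that the common fixed-point set is $\{0, \infty\}$ or $\{\infty\}$; then $\phi$ and $\mu$ are simultaneously of the form $z \mapsto \lambda z$ or simultaneously of the form $z \mapsto z + b$, and such maps visibly commute. In case $(3b)$ the conclusion is purely algebraic: from $\phi^2 = \mu^2 = (\phi \circ \mu)^2 = \mathrm{id}$ one has $\phi \circ \mu \circ \phi \circ \mu = \mathrm{id}$, and multiplying on the left by $\phi$ and on the right by $\mu$ (using $\phi^2 = \mu^2 = \mathrm{id}$) yields $\mu \circ \phi = \phi \circ \mu$.

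The main obstacle is the middle implication $(2) \Rightarrow (3)$, specifically the dichotomy that the fixed-point sets must coincide or be disjoint, together with the identification of the involution structure in the disjoint case; everything else is either immediate or a single normal-form computation. An alternative to the geometric normalization would be to pass to representing matrices in $\mathrm{SL}(2, \mathbb{C})$ and analyze commutation via shared eigenvectors, but I find the fixed-point bookkeeping above more transparent and self-contained.
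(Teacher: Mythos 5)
Your proof is correct, but there is nothing in the paper to compare it against: the paper does not prove this lemma, it imports it wholesale from Beardon (\emph{The Geometry of Discrete Groups}, Theorem 4.3.6), where the standard proof likewise proceeds by normalizing fixed points. Each of your steps checks out: the conjugation argument for $(1)\Rightarrow(2)$; the disposal of the parabolic case (where $(2)$ forces $\mathrm{Fix}(\phi)=\mathrm{Fix}(\mu)=\{q\}$, so both maps are parabolic and land in $(3a)$); the dichotomy that two two-point fixed sets invariant under the opposite map must coincide or be disjoint (sharing exactly one point $r$ forces $\mu$, fixing $r$ and permuting the pair, to fix the other point too); the normalization $\mathrm{Fix}(\phi)=\{0,\infty\}$ giving $\phi(z)=\lambda z$ with $\lambda=-1$ (note $\lambda=1$ is excluded since $\phi\neq\mathrm{id}$, so ``swaps'' really does force $\lambda=-1$, not merely $\lambda^2=1$) and $\mu(z)=c/z$; and the purely group-theoretic computation $\mu\circ\phi\circ\mu=\phi$ from $(\phi\circ\mu)^2=\mathrm{id}$ in $(3b)\Rightarrow(1)$, which, as you implicitly note, does not even use disjointness of the fixed sets. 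One small observation: for the paper's application the lemma is only ever invoked for finite-order elements of $\Deck(f^k)$, which have exactly two fixed points (Lemma~\ref{l:standardfacts}), so your parabolic case is needed only for the lemma in the generality stated, not for its use downstream. Your argument is self-contained and would serve as a legitimate replacement for the external citation.
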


We now recall some results from \cite{KLS1} and sketch their proofs. The interested reader may refer to \cite{KLS1} for further details.

\begin{proposition}[\cite{KLS1}] \label{p:primeorders}
Let $f$ be a bicritical rational map of degree $d$, and let $p$ be a prime number that does not divide $d$.  Then for all natural numbers $k$, the group $\Deck(f^k)$ has no element of order $p$.
\end{proposition}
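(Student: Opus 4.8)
The plan is to argue by contradiction. Suppose, for some $k$, that $\Deck(f^k)$ contains an element $\phi$ of order exactly $p$, and let me show this forces $p \mid d$. The key structural observation is that, by the very definition of the deck group, $f^k$ is invariant under precomposition by every element of the cyclic group $G := \langle \phi \rangle \cong \mathbb{Z}_p$; that is, $f^k \circ g = f^k$ for all $g \in G$. Thus $f^k$ is constant on the $G$-orbits, and the natural thing to do is to pass to the quotient of the sphere by $G$.

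First I would record the geometry of that quotient. A nontrivial finite cyclic subgroup of $\Mob$ is conjugate to a group of rotations $z \mapsto \zeta z$, so the orbit space $\hatC/G$ is again a Riemann sphere and the quotient map $\pi \colon \hatC \to \hatC/G \cong \hatC$ is a rational map of degree $|G| = p$ (in suitable coordinates, $\pi$ is literally $z \mapsto z^p$). Since $f^k$ is constant on $G$-orbits, it descends through $\pi$: there is a rational map $\bar f$ with $f^k = \bar f \circ \pi$.

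Taking degrees in this factorization gives $d^k = \deg(f^k) = \deg(\bar f) \cdot \deg(\pi) = \deg(\bar f)\cdot p$, so $p \mid d^k$. Because $p$ is prime, this forces $p \mid d$, contradicting the hypothesis $p \nmid d$; hence no element of order $p$ can exist. I expect no serious obstacle along this route. The only points requiring care are the two standard facts invoked: that the quotient of $\hatC$ by a finite automorphism group is again $\hatC$ (immediate from Riemann--Hurwitz, as a positive-genus quotient is impossible), and that the descended map $\bar f$ is holomorphic rather than merely continuous (which holds because $\pi$ is the holomorphic quotient map).

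As a remark, there is a more hands-on argument tailored to the bicritical setting, which may be the preferred presentation since it exhibits where bicriticality enters. Linearizing $\phi$ at one of its two fixed points $z_0$ as $w \mapsto \zeta w$ with $\zeta$ a primitive $p$-th root of unity, and writing $f^k$ in local coordinates as $w \mapsto w^m$ with $m = \deg_{z_0}(f^k)$, the relation $f^k \circ \phi = f^k$ becomes $F(\zeta w) = F(w)$ and forces $\zeta^m = 1$, hence $p \mid m$. But since $f$ is bicritical, each of its two critical points has full local degree $d$ while every other point is a local homeomorphism (see \cite{MilnorBicritical}), so by the chain rule $m = \deg_{z_0}(f^k) = d^s$ for some integer $s \geq 0$. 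Then $p \mid d^s$ again yields $p \mid d$ (and the case $m = 1$ is impossible), completing the contradiction.
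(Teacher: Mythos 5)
Your proposal is correct, but it takes a genuinely different route from the paper. The paper's proof stays inside a single fiber: since $f^k \circ \tau = f^k$, the cyclic group $\langle \tau \rangle \cong \Z_p$ permutes the $d^k$ points of $f^{-k}(w)$ for any regular value $w$, its orbits have size $1$ or $p$, and $p \nmid d^k$ forces a fixed point of $\tau$ in every such fiber; as there are infinitely many regular values, $\tau$ has infinitely many fixed points and is therefore the identity, a contradiction. This uses nothing beyond orbit counting and the fact that a nontrivial M\"obius transformation fixes at most two points. Your main argument instead globalizes the invariance: you pass to the quotient $\hatC / \langle \phi \rangle \cong \hatC$, factor $f^k = \bar{f} \circ \pi$ with $\deg \pi = p$, and read off $p \mid d^k$ from multiplicativity of degree, hence $p \mid d$ by primality. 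The two proofs exploit the same numerical fact ($p \nmid d^k$) but package it differently: yours needs the standard but heavier inputs that the quotient of $\hatC$ by a finite subgroup of $\Mob$ is again $\hatC$ and that the descended map is rational, in exchange for which the contradiction arrives immediately as a divisibility statement with no appeal to fixed points. Note that both your main argument and the paper's are valid for arbitrary rational maps --- bicriticality is never used. Your second, local argument is the only one of the three that genuinely uses bicriticality (it needs local degrees of $f^k$ to be powers of $d$), and it is close in spirit to the paper's later Lemma~\ref{l:localdegree}; its one loose point --- writing $f^k$ as $w \mapsto w^m$ in a coordinate that simultaneously linearizes $\phi$ --- is harmless, since comparing lowest-order coefficients of the power series of $f^k$ in the linearizing coordinate already yields $\zeta^m = 1$ and hence $p \mid m$.
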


\begin{proof}[Sketch Proof]
 If some element $\tau \in \Deck(f^k)$ were to have order $p$, then each element in $\hatC$ would have orbit of length $1$ or $p$ under the action of $\langle \tau \rangle$. In particular the fiber $f^{-k}(w)$ over a regular value $w$ contains $d^k$ points, and since $p$ does not divide $d^k$, the element $\tau$ must fix at least one element in such a fiber. But then as there are infinitely many regular points for $f^k$, we see that $\tau$ must be the identity, a contradiction.
\end{proof}

\begin{theorem}[\cite{KLS1}] \label{t:cyclicOrDihedral}
Let $f$ be a bicritical rational map and $k \in \mathbb{N}$.  Then $\Deck(f^k)$ is either cyclic or dihedral. Furthermore, if the degree of $f$ is odd, then $\Deck(f^k)$ is cyclic.
\end{theorem}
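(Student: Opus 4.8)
The plan is to combine the classification of finite subgroups of $\Mob$ with a single divisibility constraint coming from the dynamics, which dispatches every case uniformly. By Lemma~\ref{l:standardfacts}(1), the group $G := \Deck(f^k)$ is finite, and hence is cyclic, dihedral, or isomorphic to one of the polyhedral groups $A_4$, $S_4$, $A_5$. It therefore suffices to exclude the three polyhedral groups for every $d$, and to exclude the dihedral groups when $d$ is odd.

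The key step is to show that $|G|$ divides $d^k$. I would choose a point $w \in \hatC$ that is not a critical value of $f^k$ and that does not belong to $f^k\big(\bigcup_{g \in G \setminus \{\mathrm{id}\}} \mathrm{Fix}(g)\big)$; since $G$ is finite and each nonidentity $\mu \in G$ has $|\mathrm{Fix}(\mu)| = 2$ by Lemma~\ref{l:standardfacts}(2), this excludes only finitely many values of $w$. Writing $F = (f^k)^{-1}(w)$, we have $|F| = \deg(f^k) = d^k$, and since $f^k \circ g = f^k$ every $g \in G$ permutes $F$. By the choice of $w$, if $g(z) = z$ for some $z \in F$ and $g \neq \mathrm{id}$, then $z \in \mathrm{Fix}(g)$ and so $w = f^k(z)$ would lie in the excluded image, a contradiction. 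Hence $G$ acts freely on $F$, every $G$-orbit in $F$ has size $|G|$, and therefore $|G|$ divides $|F| = d^k$.

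With this divisibility in hand I would exclude the polyhedral groups with no case analysis. Each of $A_4$, $S_4$, $A_5$ has order divisible by $6$, so if $G$ were polyhedral then $6 \mid |G| \mid d^k$, which forces $2 \mid d$ and $3 \mid d$, i.e. $d \geq 6$. On the other hand, iterating Lemma~\ref{l:standardfacts}(4) gives $\Deck(f) \subseteq G$, and by Lemma~\ref{l:standardfacts}(5) we have $\Deck(f) \cong \mathbb{Z}_d$; but the largest cyclic subgroup of any of $A_4$, $S_4$, $A_5$ has order at most $5$, so a polyhedral $G$ cannot contain a copy of $\mathbb{Z}_d$ with $d \geq 6$. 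This contradiction shows $G$ is cyclic or dihedral. Finally, if $d$ is odd then $|G| \mid d^k$ is odd, and since every dihedral group has even order, $G$ must be cyclic, giving the last assertion.

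The crux of the argument is the freeness used in the divisibility step: one must verify that a generic fiber $F$ can be taken to avoid the finitely many fixed points of all nonidentity elements of $G$, so that the $G$-action on $F$ is genuinely free and the orbit–stabilizer count applies. I expect this to be the only delicate point; once $|G| \mid d^k$ is established, the classification of finite Möbius groups and Milnor's computation $\Deck(f) \cong \mathbb{Z}_d$ (Lemma~\ref{l:standardfacts}(5)) combine to finish the proof cleanly and uniformly in the degree.
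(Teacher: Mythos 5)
Your proof is correct, but your key counting lemma differs from the paper's. The paper deduces the theorem from Proposition~\ref{p:primeorders}: for any prime $p \nmid d$, $\Deck(f^k)$ has no element of order $p$. Its proof runs in the opposite direction from yours: if $\tau$ had order $p$, then since $p \nmid d^k$ the orbit count forces $\tau$ to fix a point in \emph{every} regular fiber, and ranging over infinitely many fibers gives $\tau$ infinitely many fixed points, a contradiction. You instead prove the stronger, uniform statement $|\Deck(f^k)| \mid d^k$, by choosing a single generic value $w$ avoiding both the critical values of $f^k$ and the images of the finitely many fixed points of nonidentity elements, so that the action on the fiber $(f^k)^{-1}(w)$ is free and orbit--stabilizer applies; this genericity/freeness step is the one delicate point and you handle it correctly. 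After that, the two arguments converge: both exclude the polyhedral groups by noting they would force $2 \mid d$ and $3 \mid d$, hence $d \geq 6$, contradicting that $\mathbb{Z}_d \cong \Deck(f) \subseteq \Deck(f^k)$ while no polyhedral group contains an element (or cyclic subgroup) of order $\geq 6$; and both handle odd $d$ the same way in substance (you via parity of $|\Deck(f^k)|$, the paper via absence of order-$2$ elements, either of which rules out dihedral groups). What your route buys is a single cleaner arithmetic invariant that dispatches both exclusions at once and avoids the ``infinitely many fibers'' argument; what the paper's route buys is a standalone element-order proposition, which is the form of the constraint it records for reuse.
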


\begin{proof}[Sketch Proof]
The polyhedral groups $A_4$, $A_5$ and $S_4$ all contain elements of order $2$ and elements of order $3$. Thus if $\Deck(f^k)$ were polyhedral we would have $\deg(f) = d \geq 6$ by Proposition~\ref{p:primeorders}. But then by Lemma~\ref{l:standardfacts}, then group $\Deck(f)$ would contain an element of order $d \geq 6$. But none of the polyhedral groups contain an element of order $\geq 6$, so this is a contradiction.

Now suppose the degree of $d$ is odd. In that case, $2$ does not divide $d$ and so by Proposition~\ref{p:primeorders}, $\Deck(f^k)$ cannot contain any element of order $2$. Thus $\Deck(f^k)$ is not dihedral.
\end{proof}

We remark that the following result, which is a special case of Theorem~\ref{mthm}, was obtained in \cite{KLS1}. The results in this paper are obtained by methods markedly different from those employed in \cite{KLS1}.

\begin{theorem}[\cite{KLS1}] \label{mthm3}
If $f$ is quadratic then the possibilities for $\Deck(f^k)$ (up to isomorphism) are $\mathbb{Z}_{2^n}$ for some $n \geq 1$, the Klein \emph{Vierergruppe} $V_4$ or the dihedral group $D_8$ of order $8$. Furthermore, if $f$ is not a power map then $|\Deck(f^k)| \leq 8$.
\end{theorem}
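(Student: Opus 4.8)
The plan is to reduce everything to controlling a single cyclic subgroup. First I would record the coarse structure: since $d=2$, the only prime dividing $d$ is $2$, so by Proposition~\ref{p:primeorders} every element of $\Deck(f^k)$ has order a power of $2$; combined with Theorem~\ref{t:cyclicOrDihedral}, the group $\Deck(f^k)$ is a cyclic or dihedral $2$-group. Next I would normalize so that $\mathcal{C}_f=\{0,\infty\}$; then by Lemma~\ref{l:standardfacts}(5) the generator of $\Deck(f)$ is $z\mapsto -z$, and $f$ factors as $f=F\circ q$ with $q(z)=z^2$ and $F\in\Mob$. By Theorem~\ref{t:deckfixescrits} every $\phi\in\Deck(f^k)$ preserves $\{0,\infty\}$, so the rule recording whether $\phi$ fixes or interchanges the two critical points is a homomorphism $\Deck(f^k)\to\Z_2$. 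Its kernel $K$ consists of the maps fixing both $0$ and $\infty$, hence is a finite---therefore cyclic---group of maps $z\mapsto \lambda z$; being a $2$-group it satisfies $K\cong\Z_{2^m}$, and $[\Deck(f^k):K]\le 2$.

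With this in place the whole theorem reduces to bounding $m$. If $f$ is a power map then $F$ permutes $\{0,\infty\}$ and one checks directly that $\Deck(f^k)\cong\Z_{2^k}$, giving the unbounded cyclic family. So the crux is the following claim: if $f$ is not a power map then $m\le 2$, whence $|\Deck(f^k)|\le 2|K|\le 8$. Granting this, since $\Deck(f^k)$ is a cyclic or dihedral $2$-group of order at most $8$, the only possibilities left are $\Z_{2^n}$, the Vierergruppe $V_4$ (the dihedral group of order $4$), and $D_8$, which is exactly the asserted list.

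To bound $m$ I would use the factorization $f=F\circ q$ to convert a rotational symmetry into one for a lower iterate. Writing $\sigma(z)=\zeta z$ with $\zeta$ a primitive $2^m$-th root of unity, a one-line computation gives the semiconjugacy $f\circ\sigma=N_1\circ f$, where $N_1:=F\circ r\circ F^{-1}$ is the M\"obius conjugate by $F$ of the rotation $r(u)=\zeta^2 u$. In particular $N_1$ fixes the critical values $\mathcal{V}_f=\{F(0),F(\infty)\}$ and has order $2^{m-1}$. Since $f$ is surjective, the identity $f^k\circ\sigma=f^k$ is equivalent to $f^{k-1}\circ N_1=f^{k-1}$, i.e.\ $N_1\in\Deck(f^{k-1})$. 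Applying Theorem~\ref{t:deckfixescrits} to $N_1$ now forces $N_1(\{0,\infty\})=\{0,\infty\}$: the rotation about the critical values must also preserve the critical points.

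The bound on $m$ then drops out of elementary M\"obius geometry. When $f$ is not a power map, $\mathcal{V}_f\ne\{0,\infty\}$, so a nonidentity $N_1$ fixing the two critical values while preserving the two critical points can only be an involution (when the two pairs are disjoint, $N_1$ must swap $0$ and $\infty$) or the identity (when the pairs share a point, $N_1$ fixes three distinct points); either way $\operatorname{ord}(N_1)=2^{m-1}\le 2$, giving $m\le 2$. The main obstacle in this scheme is exactly this reduction step, and it is where Theorem~\ref{t:deckfixescrits} is indispensable: without knowing a priori that the deck transformations of every iterate preserve $\mathcal{C}_f$, there is no reason the induced map $N_1$ should respect $\{0,\infty\}$, and the clean dichotomy between power maps and the uniformly bounded case would collapse.
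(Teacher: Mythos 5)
Your proposal is correct, though it should be noted that this paper never proves Theorem~\ref{mthm3} directly: the statement is quoted from \cite{KLS1}, and what the paper establishes is the even-degree generalization (Theorem~\ref{mthm}), whose specialization to $d=2$ recovers it. Measured against that machinery, your argument rests on the same two pillars --- the coarse classification of $\Deck(f^k)$ as a cyclic or dihedral $2$-group (Proposition~\ref{p:primeorders} with Theorem~\ref{t:cyclicOrDihedral}), and Theorem~\ref{t:deckfixescrits} combined with a descent step (your semiconjugacy $f\circ\sigma = N_1\circ f$, computed from the factorization $f = F\circ q$, is exactly Proposition~\ref{p:keyprop} in coordinates, with the useful extra observation that an element fixing $\mathcal{C}_f$ pointwise descends to one fixing $\mathcal{V}_f$ pointwise) --- but your finishing argument is genuinely different. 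The paper works ``horizontally'' across iterates: Lemma~\ref{l:decksizes} shows the deck group at most doubles per iterate for non-power maps, Lemmas~\ref{l:deckincreases} and~\ref{l:fourMobius} identify the finite reservoir of M\"obius maps that can drive growth, and Proposition~\ref{p:Deckf3isenough} proves stabilization at the third iterate, giving $|\Deck(f^k)|\le|\Deck(f^3)|\le 4d$. You instead work ``vertically'' inside a single iterate: splitting $\Deck(f^k)$ by its action on $\mathcal{C}_f$, you bound the rotation subgroup $K\cong\Z_{2^m}$ in one shot, since a generator of order $2^m$ descends to an element of $\Deck(f^{k-1})$ of order $2^{m-1}$ that fixes $\mathcal{V}_f$ pointwise and (by Theorem~\ref{t:deckfixescrits} again) preserves $\mathcal{C}_f$, and fixed-point counting kills any such element of order greater than $2$ once $\mathcal{V}_f\neq\mathcal{C}_f$. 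Your route is shorter and avoids the stabilization analysis entirely; the paper's route yields finer information (the growth mechanism and stabilization at $k=3$) that feeds the realization examples of Proposition~\ref{p:examples}, and is stated uniformly for all even degrees --- though your argument would also generalize, since a rotation of order $n$ about $\mathcal{C}_f$ descends to one of order $n/\gcd(n,d)$, forcing $n\le 2d$. Importantly, your appeal to Theorem~\ref{t:deckfixescrits} is not circular within this paper, as its proof (Proposition~\ref{p:quadraticdeckfixescrits} and the lemmas of Section~\ref{s:deckfixescrits}) nowhere invokes Theorem~\ref{mthm3}. Two minor points to tidy: the descent step needs $k\ge 2$ (for $k=1$ the bound is immediate from $\Deck(f)\cong\Z_2$), and the power-map claim ``one checks directly'' deserves the one-line verification that $\Deck(f^k)$ consists precisely of the rotations $z\mapsto\zeta z$ with $\zeta^{2^k}=1$.
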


\section{Proof of Theorem \ref{t:deckfixescrits}}\label{s:deckfixescrits}

The primary goal of this section is to prove Theorem \ref{t:deckfixescrits}, which we reproduce here for convenience. 

\begin{critpointspreserved}
 Let $f$ be a bicritical rational map and $\phi \in \Deck(f^k)$ for some $k$. Then $\phi(\mathcal{C}_f) = \mathcal{C}_f$.
\end{critpointspreserved}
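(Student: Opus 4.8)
The plan is to transfer the question about the set $\mathcal{C}_f$ into a question about the finite group $\Deck(f^k)$ and its distinguished cyclic subgroup $\Deck(f)$. By Lemma~\ref{l:standardfacts}(5), $\Deck(f) \cong \mathbb{Z}_d$ has a generator $\rho$ with $\mathrm{Fix}(\rho) = \mathcal{C}_f$, and moreover every non-identity element of $\Deck(f)$ has fixed-point set exactly $\mathcal{C}_f$. For any M\"obius transformation $\phi$ we have $\phi(\mathrm{Fix}(\rho)) = \mathrm{Fix}(\phi\rho\phi^{-1})$, so $\phi(\mathcal{C}_f) = \mathcal{C}_f$ would follow at once if $\phi\rho\phi^{-1} \in \Deck(f)$, that is, if $\Deck(f)$ were normal in $\Deck(f^k)$. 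The first step is therefore to reduce the theorem to the normality of $\Deck(f)$ in $\Deck(f^k)$, using the containment $\Deck(f) \subseteq \Deck(f^k)$ from Lemma~\ref{l:standardfacts}(4) together with the fact that $\Deck(f^k)$ is cyclic or dihedral (Theorem~\ref{t:cyclicOrDihedral}).

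The easy cases fall out immediately. If $\Deck(f^k)$ is cyclic then all its elements share the axis $\mathcal{C}_f$ and there is nothing to prove. If $\Deck(f^k)$ is dihedral and $d = \mathrm{ord}(\rho) > 2$, then $\rho$, having order greater than $2$, must lie in the cyclic rotation subgroup, which is normal; conjugation then sends $\rho$ to $\rho^{\pm 1} \in \Deck(f)$, and $\mathrm{Fix}(\rho^{\pm1}) = \mathcal{C}_f$ by Lemma~\ref{l:standardfacts}(5). By Theorem~\ref{t:cyclicOrDihedral} the dihedral case only arises for even $d$, so the sole remaining possibility is $d = 2$ with $\Deck(f^k)$ dihedral, i.e.\ (by Theorem~\ref{mthm3}) $\Deck(f^k) \cong V_4$ or $\Deck(f^k) \cong D_8$. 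Since $V_4$ is abelian we have $\phi\rho\phi^{-1} = \rho$, which settles that case and isolates $\Deck(f^k) \cong D_8$ as the genuinely hard one.

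In the $D_8$ case I would reduce to a statement about a single element. The centre of $D_8$ is $\{1, r^2\}$, where $r$ is the order-$4$ rotation, and the involution $\rho$ is central precisely when $\rho = r^2$, precisely when $\rho$ commutes with $r$; by Lemma~\ref{l:Beardon} (and the fact that $r$ is not an involution) this is in turn equivalent to $\mathrm{Fix}(\rho) = \mathrm{Fix}(r)$, i.e.\ to $\mathcal{C}_f = \mathrm{Fix}(r)$. Once $\rho = r^2$ is central, $\Deck(f)$ is normal and the theorem follows as before. Thus everything comes down to showing that the fixed points of the order-$4$ symmetry $r$ are exactly the critical points of $f$.

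To prove $\mathrm{Fix}(r) = \mathcal{C}_f$ I would exploit local-degree invariance. Since $\phi \in \Deck(f^k)$ gives $\deg_z(f^k) = \deg_{\phi(z)}(f^k)$ (Lemma~\ref{l:standardfacts}(3) applied to $f^k$), and since by the chain rule $\deg_z(f^k) = 2^{m(z)}$, where $m(z)$ is the number of $0 \le j \le k-1$ with $f^j(z) \in \mathcal{C}_f$, the function $m$ is $\Deck(f^k)$-invariant. The relation $f^k \circ r = f^k$ makes $f^k$ invariant under the order-$4$ rotation $r$, so in local coordinates centred at a point of $\mathrm{Fix}(r)$ the map $f^k$ factors through $w \mapsto w^4$; hence $\deg_z(f^k)$ is divisible by $4$ there, forcing $m \ge 2$ at each fixed point of $r$. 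The remaining, and genuinely hardest, step is to upgrade this to $\mathrm{Fix}(r) \subseteq \mathcal{C}_f$: assuming instead that $\rho$ is a reflection, so that $\mathcal{C}_f$ is an antipodal pair distinct from $\mathrm{Fix}(r)$, I would seek a contradiction by analysing the forward orbit of $\mathcal{C}_f$ against the $D_8$-symmetry of the branching of $f^k$, i.e.\ a Riemann--Hurwitz count constrained by the postcritical combinatorics of the degree-$2$ map. I expect this final step---locating the centre of the $D_8$-action precisely at the critical points---to be the main obstacle, since everything preceding it is group theory combined with Lemmas~\ref{l:standardfacts} and~\ref{l:Beardon}.
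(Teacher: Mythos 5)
Your reductions are correct and, as far as they go, follow the same route as the paper: the cyclic case is immediate, the dihedral case with $d \geq 3$ follows from normality of the rotation subgroup (this is the paper's Lemma~\ref{l:higherdegreefixescrits}, via Lemma~\ref{l:dihedralfacts}), the $V_4$ case follows from commutativity together with Lemma~\ref{l:Beardon}, and you correctly isolate the genuinely hard case: $f$ quadratic with $\Deck(f^k) \cong D_8$, where the issue is whether the order-$2$ element $\rho$ of $\Deck(f)$ is the central element $r^2$, equivalently whether $\mathrm{Fix}(r) = \mathcal{C}_f$. But at exactly this point the proposal stops being a proof: you say you ``would seek a contradiction by analysing the forward orbit of $\mathcal{C}_f$ against the $D_8$-symmetry of the branching,'' and you yourself flag this as the main obstacle. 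That step is the entire content of the theorem in the quadratic case, and no argument is supplied for it. What you do establish there --- that every point of $\mathrm{Fix}(r)$ has $\deg_{f^k}$ divisible by $4$, hence its orbit meets $\mathcal{C}_f$ at least twice within $k$ steps --- is true (it is essentially the paper's Lemma~\ref{l:localdegree}), but by itself it is compatible with $\mathrm{Fix}(r)$ being a pair of precritical, non-critical points, so no contradiction follows from it alone.

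The missing ingredients, which the paper must develop, are structural facts about $f$ rather than about the group: (1) a descent mechanism (Lemmas~\ref{l:Mobiuslemma} and \ref{l:keylemma}) showing that an element of $\Deck(f^k)$ preserving $\mathcal{C}_f$ induces an element of $\Deck(f^{k-1})$ preserving $\mathcal{V}_f$; (2) using this, that a dihedral $\Deck(f^k)$ forces $f$ to be critically coalescing, i.e. $f(v_1) = f(v_2)$ (Lemma~\ref{l:dihedralimpliescritcoal}); (3) in consequence, that the existence of a point with local degree $>2$ for $f^k$ forces one critical point $c_1$ to be periodic of some period $p$, with $f^n(c) = c_1$ only when $p$ divides $n$ (Lemma~\ref{l:periodiccritpoint}); and (4) a counting argument (Proposition~\ref{p:quadraticdeckfixescrits}): any point $a$ in the $\Deck(f^k)$-orbit of $c_1$ has the same unbounded local-degree growth as $c_1$ by Lemma~\ref{l:standardfacts}, hence eventually falls onto $c_1$, and the return-time structure from (3) then forces $a$ itself to be a critical point, giving $\mathrm{orb}_{\Deck(f^k)}(c_1) = \mathcal{C}_f$ and hence the theorem. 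Your proposed Riemann--Hurwitz count has nothing to grip without (2) and (3): it is precisely the critically coalescing property and the induced periodic critical orbit that make the postcritical combinatorics rigid enough to yield the contradiction you are hoping for.
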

 
We begin by proving two useful lemmas.  First, Lemma \ref{l:Mobiuslemma} generalizes an argument from \cite{KLS1} (a similar result is also given by Pakovich in \cite{Sym}).

\begin{lemma}\label{l:Mobiuslemma}
Let $f$ be a degree $d$ bicritical rational map with critical point set $\mathcal{C}_f$ and critical value set $\mathcal{V}_f$. Then if $\phi$ is a M\"obius transformation such that $\phi(\mathcal{C}_f) = \mathcal{C}_f$, then there exists a unique M\"obius transformation $\mu$ such that $\mu \circ f = f \circ \phi$. Furthermore $\mu (\mathcal{V}_f) = \mathcal{V}_f$.
\end{lemma}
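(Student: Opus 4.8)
The plan is to reduce to a normal form and then descend $f\circ\phi$ through $f$ by an explicit Möbius transformation. First I note that the statement is invariant under conjugation of $f$: if $c\in\Mob$ and $\tilde f = c\circ f\circ c^{-1}$, then $\mathcal C_{\tilde f}=c(\mathcal C_f)$, the hypothesis $\phi(\mathcal C_f)=\mathcal C_f$ becomes $\tilde\phi(\mathcal C_{\tilde f})=\mathcal C_{\tilde f}$ for $\tilde\phi = c\circ\phi\circ c^{-1}$, and $\mu\circ f = f\circ\phi$ is equivalent to $(c\circ\mu\circ c^{-1})\circ\tilde f = \tilde f\circ\tilde\phi$. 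Hence I may assume $\mathcal C_f=\{0,\infty\}$. By the normal form for bicritical maps recalled above, $f = R\circ p_d$ for some $R\in\Mob$, where $p_d(z)=z^d$. Since $\phi$ permutes the two points of $\mathcal C_f=\{0,\infty\}$, either $\phi(z)=\lambda z$ or $\phi(z)=\lambda/z$ for some $\lambda\neq 0$.

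The key observation, and the only real step, is that post-composing $p_d$ by such a $\phi$ again has the form $(\text{M\"obius})\circ p_d$: indeed $p_d(\lambda z)=\lambda^d\, p_d(z)$ and $p_d(\lambda/z)=\lambda^d/p_d(z)$, so in either case $p_d\circ\phi = \nu\circ p_d$, where $\nu$ is the M\"obius transformation $w\mapsto\lambda^d w$ or $w\mapsto \lambda^d/w$ respectively. Therefore $f\circ\phi = R\circ(p_d\circ\phi) = R\circ\nu\circ p_d$, and I claim $\mu := R\circ\nu\circ R^{-1}$ works. Indeed $\mu\circ f = R\circ\nu\circ R^{-1}\circ R\circ p_d = R\circ\nu\circ p_d = f\circ\phi$, and $\mu$ is M\"obius as a composition of M\"obius transformations.

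For uniqueness, if $\mu_1\circ f = \mu_2\circ f$ then $\mu_1$ and $\mu_2$ agree on the image of $f$, which is all of $\hatC$ since $f$ is surjective; hence $\mu_1=\mu_2$. Finally, to see $\mu(\mathcal V_f)=\mathcal V_f$, write $g:=\mu\circ f = f\circ\phi$ and compute its critical values in two ways. As $\mu$ and $\phi$ lie in $\Mob$ and so have no critical points, $\mathcal C_g=\mathcal C_f$ and $\mathcal V_g = g(\mathcal C_f)$. Using $g=f\circ\phi$ gives $\mathcal V_g = f(\phi(\mathcal C_f)) = f(\mathcal C_f)=\mathcal V_f$, while using $g=\mu\circ f$ gives $\mathcal V_g = \mu(f(\mathcal C_f))=\mu(\mathcal V_f)$; equating yields $\mu(\mathcal V_f)=\mathcal V_f$.

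I expect no serious obstacle here: the content is entirely in the reduction to $f=R\circ p_d$ and the elementary fact that $p_d\circ\phi$ retains the form $\nu\circ p_d$. The one point requiring a little care is handling both possibilities for $\phi$ --- fixing versus swapping the two critical points --- but both are dispatched by the single identity $p_d\circ\phi=\nu\circ p_d$ above. One could alternatively argue more invariantly by observing that $f$ and $f\circ\phi$ are two degree-$d$ maps totally ramified over the same pair of critical points, so that $f\circ\phi$ is constant on the fibers of $f$ (equivalently, $\phi$ normalizes $\Deck(f)$), and then define $\mu$ as the induced map on the quotient $\hatC\cong\hatC/\Deck(f)$; I prefer the explicit computation since it produces $\mu$ directly.
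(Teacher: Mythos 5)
Your proof is correct and follows essentially the same route as the paper's: reduce to the power map $z\mapsto z^d$ via the normal-form decomposition of a bicritical map (your conjugation to $\mathcal{C}_f=\{0,\infty\}$ plus $f=R\circ p_d$ is just the paper's two-sided decomposition $f=\alpha\circ g\circ\beta$), do the explicit computation there, conjugate back, and get uniqueness from surjectivity of $f$. If anything, your write-up is slightly more careful than the paper's at two points the paper glosses over: you treat the critical-point-swapping case $\phi(z)=\lambda/z$ explicitly (where $\nu(w)=\lambda^d/w$ rather than $\lambda^d w$), and you actually prove the claim $\mu(\mathcal{V}_f)=\mathcal{V}_f$, which the paper dismisses as ``clear.''
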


\begin{proof}
Once we prove existence, the uniqueness will follow from the surjectivity of $f$. We first prove the existence result for the sepcial case $g(z) = z^d$. In this case, $\phi$ is a M\"obius transformation such that $\phi(\mathcal{C}_g) = \mathcal{C}_g$ if and only if $\phi(z) = a z^{\pm 1}$ for some $a \in \mathbb{C} \setminus \{ 0 \}$. But then $g \circ \phi = a^d z^{\pm d}$, and so taking $\mu(z) = a^d z$ completes the proof for $g(z) = z^d$.

Now suppose that $f$ is an arbitrary bicritical rational map of degree $d$. Then there exist M\"obius transformations $\alpha$ and $\beta$ such that $f = \alpha \circ g \circ \beta$, where $g(z) = z^d$. In particular $\beta(\mathcal{C}_f) = \mathcal{C}_g$ and $\alpha(\mathcal{V}_g) = \mathcal{V}_f$. Thus if $\phi$ fixes $\mathcal{C}_f$ as a set then $\phi' = \beta \circ \phi \circ \beta^{-1}$ fixes $\mathcal{C}_g$ as a set, and by the above there exists $\mu'$ such that $\mu' \circ g = g \circ \phi'$. Hence taking $\mu = \alpha \circ \mu' \circ \alpha^{-1}$, a simple calculation yields 
\begingroup
\allowdisplaybreaks
\begin{align*}
\mu \circ f 	&= \mu \circ (\alpha \circ g \circ \beta) \\
			&= (\alpha \circ \mu' \circ \alpha^{-1}) \circ \alpha \circ g \circ \beta \\
			& = \alpha \circ (\mu' \circ g) \circ \beta \\
			&= \alpha \circ (f \circ \phi') \circ \beta \\
			&= \alpha \circ f \circ (\beta \circ \phi \circ \beta^{-1}) \circ \beta \\
			&= \alpha \circ g \circ \beta \circ \phi \\
			&= f \circ \phi
\end{align*}
\endgroup
as desired. The fact that $\mu (\mathcal{V}_F) = \mathcal{V}_F$ is clear.
\end{proof}

In particular, when the map $\phi$ in Lemma~\ref{l:Mobiuslemma} belongs to $\Deck(f^k)$, we get the following.

\begin{lemma}\label{l:keylemma}
Let $f$ be a bicritical rational map, $k  \geq 2$, and $\phi \in \Deck(f^k)$. If $\phi(\mathcal{C}_f) = \mathcal{C}_f$, then there exists a unique map $\phi_{k-1} \in \Deck(f^{k-1})$ such that $f \circ \phi_k = \phi_{k-1} \circ f$. Furthermore, $\phi_{k-1}(\mathcal{V}_f) = \mathcal{V}_f$.
\end{lemma}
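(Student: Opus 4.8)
The plan is to produce $\phi_{k-1}$ directly from Lemma~\ref{l:Mobiuslemma} and then verify that it descends into the smaller deck group. Writing $\phi = \phi_k$, the hypothesis $\phi_k(\mathcal{C}_f) = \mathcal{C}_f$ is exactly what is needed to invoke Lemma~\ref{l:Mobiuslemma}, which furnishes a unique M\"obius transformation $\mu$ with $\mu \circ f = f \circ \phi_k$ and $\mu(\mathcal{V}_f) = \mathcal{V}_f$. I would set $\phi_{k-1} \coloneqq \mu$; then the intertwining identity $f \circ \phi_k = \mu \circ f = \phi_{k-1} \circ f$ holds by construction, and the claimed invariance $\phi_{k-1}(\mathcal{V}_f) = \mathcal{V}_f$ is immediate from Lemma~\ref{l:Mobiuslemma}.

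The substantive step is to check that this $\mu$ actually belongs to $\Deck(f^{k-1})$, i.e.\ that $f^{k-1} \circ \mu = f^{k-1}$. I would compute
\[
 f^{k-1} \circ \mu \circ f = f^{k-1} \circ (f \circ \phi_k) = f^k \circ \phi_k = f^k = f^{k-1} \circ f,
\]
where the first equality uses the relation from Lemma~\ref{l:Mobiuslemma} and the third uses the hypothesis $\phi_k \in \Deck(f^k)$. Since $f$ is surjective, it can be cancelled on the right, yielding $f^{k-1} \circ \mu = f^{k-1}$, so $\mu = \phi_{k-1} \in \Deck(f^{k-1})$ as desired.

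Uniqueness of $\phi_{k-1}$ follows from the same right-cancellation: if $\psi \circ f = f \circ \phi_k = \phi_{k-1} \circ f$ for a M\"obius transformation $\psi$, then surjectivity of $f$ forces $\psi = \phi_{k-1}$ (this also follows directly from the uniqueness clause of Lemma~\ref{l:Mobiuslemma}). I do not anticipate a genuine obstacle here; the lemma is essentially a descent statement, and once Lemma~\ref{l:Mobiuslemma} is in hand the only real point is the membership verification, whose crux is the cancellation computation displayed above combined with the deck relation $f^k \circ \phi_k = f^k$.
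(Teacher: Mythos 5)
Your proposal is correct and is essentially the paper's own proof: the paper also obtains $\phi_{k-1}$ from Lemma~\ref{l:Mobiuslemma} and then verifies membership in $\Deck(f^{k-1})$ via a commutative-diagram chase (outer rectangle commutes since $\phi_k \in \Deck(f^k)$, hence the right square commutes), which is exactly your cancellation computation written diagrammatically. Your version just makes explicit the right-cancellation by surjectivity of $f$ that the paper leaves implicit.
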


\begin{proof}
By Lemma~\ref{l:Mobiuslemma}, it suffices to show that $\phi_{k-1} \in \Deck(f^{k-1})$. To see this, consider the following diagram.
 \begin{center}
  \begin{tikzcd} 
  \hat{\mathbb{C}} \arrow[r, "f"] \arrow[d, "\phi_k" ]
    &   \hat{\mathbb{C}} \arrow[d, "\phi_{k-1}" ]  \arrow[r, "f^{k-1}"]  &   \hatC  \arrow[d, "\mathrm{id}"]\\
      \hat{\mathbb{C}} \arrow[r, "f"]  &   \hat{\mathbb{C}}  \arrow[r, "f^{k-1}" ] &  \hatC 
\end{tikzcd}
\end{center}
The large outermost rectangle commutes since $\phi_k \in \Deck(f^k)$, and the square on the left comes directly from Lemma~\ref{l:Mobiuslemma}. Therefore, the square on the right commutes as well. As a consequence, $\phi_{k-1} \in  \Deck(f^{k-1})$.
\end{proof}

%In the next section, we will show that if $f$ is a bicritical rational map, and $\phi \in \Deck(f^k)$, then $\phi(\mathcal{C}_f) = \mathcal{C}_f$. This will allow us to drop this hypothesis in Lemma~\ref{l:keylemma} and strengthen it in Proposition~\ref{p:keyprop}. 

%However, we will find the result of Lemma~\ref{l:keylemma} useful when dealing with quadratic rational maps.  
 
 Next, we enumerate some simple properties of dihedral groups that we will use later.   
 
 \begin{lemma}\label{l:dihedralfacts}
 Let $n \geq 2$. Consider the presentation of the dihedral group 
 \begin{equation}\label{e:dihedralpresentation}
D_{2n} =  \langle R, F \mid R^n = F^2 = (RF)^2 = \mathrm{id} \rangle.
\end{equation}
\begin{enumerate}
\item \label{i:dihedral} For each integer $c \geq 3$, $D_{2n}$ has at most one cyclic subgroup of order $c$.  Furthermore, any generator of such a cyclic group is a power of $R$.

% If $\Gamma = \langle r \rangle$ is a cyclic subgroup of $D_{2n}$ such that $| \Gamma | \geq 3$ then $\Gamma$ is the unique such subgroup. Furthermore, there must exist $\ell$ such that $R^\ell = r$. 
\item \label{i:dihedralfact1} Let $n$ be even and suppose $\alpha \in D_{2n}$ has order $2$. Then there exists a subgroup $\Gamma$ of $D_{2n}$ such that $\alpha \in \Gamma$ and $\Gamma \cong V_4$. 
\end{enumerate}
\end{lemma}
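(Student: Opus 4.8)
The plan is to work directly with the standard description of $D_{2n}$ afforded by the presentation \eqref{e:dihedralpresentation}: its $2n$ elements split into the $n$ \emph{rotations} $R^0, R^1, \dots, R^{n-1}$, forming the cyclic subgroup $\langle R \rangle$ of order $n$, together with the $n$ \emph{reflections} $R^0 F, R^1 F, \dots, R^{n-1} F$. From the relations $F^2 = (RF)^2 = \mathrm{id}$ one extracts the conjugation rule $F R = R^{-1} F$, and from it the fact that every reflection satisfies $(R^i F)^2 = \mathrm{id}$. Hence every reflection is an involution, and consequently every element of order at least $3$ is a rotation, i.e.\ a power of $R$.

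For part~\ref{i:dihedral}, the observation just made immediately gives the ``furthermore'' claim: any generator of a cyclic subgroup of order $c \geq 3$ has order $c \geq 3$, hence is a power of $R$. It follows that every cyclic subgroup of order $c \geq 3$ is contained in $\langle R \rangle$. Since $\langle R \rangle$ is cyclic of order $n$, it has exactly one subgroup of each order dividing $n$ (and none of any other order), so it contains at most one subgroup of order $c$. Therefore $D_{2n}$ has at most one cyclic subgroup of order $c$.

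For part~\ref{i:dihedralfact1}, write $n = 2m$ and set $\sigma = R^{m} = R^{n/2}$, the unique rotation of order $2$. The crucial point, valid precisely because $n$ is even, is that $\sigma$ is central: it commutes with each $R^j$, and since $R^{-m} = R^{m}$ we have $\sigma(R^j F) = R^{m+j} F = R^{j} R^{-m} F = (R^j F)\sigma$. Now take $\alpha$ of order $2$. If $\alpha \neq \sigma$, then $\alpha$ and $\sigma$ are two distinct commuting involutions, so their product is again an involution distinct from both, and $\Gamma = \langle \alpha, \sigma \rangle = \{\mathrm{id}, \alpha, \sigma, \alpha\sigma\} \cong V_4$ with $\alpha \in \Gamma$. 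If instead $\alpha = \sigma$, pick any reflection, say $F$; then $\sigma$ and $F$ are distinct commuting involutions and $\Gamma = \langle \sigma, F \rangle \cong V_4$ again contains $\alpha$.

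The only genuine subtlety is in part~\ref{i:dihedralfact1}: realizing $\alpha$ inside a copy of $V_4$ requires a \emph{second} involution commuting with $\alpha$, and it is exactly the centrality of $R^{n/2}$ — a property that fails when $n$ is odd — that supplies one. Part~\ref{i:dihedral} rests on the standard structure theorem for finite cyclic groups (a unique subgroup of each order dividing the order), applied to $\langle R \rangle$.
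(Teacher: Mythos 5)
Your proof is correct and takes essentially the same approach as the paper: the heart of part~\ref{i:dihedralfact1} is the centrality of $R^{n/2}$ when $n$ is even, which is exactly the paper's argument. Yours is in fact slightly more complete, since the paper leaves part~\ref{i:dihedral} to the reader and only explicitly treats involutions $\alpha \neq R^{n/2}$ (disposing of $n=2$ separately), whereas you prove part~\ref{i:dihedral} in full and handle $\alpha = R^{n/2}$ uniformly by pairing it with a reflection.
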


\begin{remark} Since we allow $n=2$, we consider the \emph{Klein Vierergruppe} $V_4$ to be dihedral, i.e. $V_4 = D_{4}$. \end{remark} 

\begin{proof}
The proof of \eqref{i:dihedral} is a standard result about dihedral groups, and is left to the reader, who may wish to appeal to the characterization of $D_{2n}$ as the group of symmetries of the regular $n$-gon. For \eqref{i:dihedralfact1}, note first that the result trivially holds for the group $V_4$.  So assume $n>2$. Using the presentation \eqref{e:dihedralpresentation},
the center $Z(D_{2n})$ is isomorphic to $\mathbb{Z}_2$, and is generated by $\mu = R^{n/2}$. Thus if $\alpha$ is any other element of order $2$ in $D_{2n}$, then $\Gamma = \{ \mathrm{id}, \mu, \alpha, \mu \alpha = \alpha \mu \}$ forms a subgroup of $D_{2n}$ isomorphic to $V_4$.
\end{proof}
 
\subsection{Proof of Theorem~\ref{t:deckfixescrits} for degree $d \geq 3$}
 
 We now turn our attention to proving Theorem \ref{t:deckfixescrits}. The proof in the case for $\deg(f) \geq 3$ is relatively simple.
 
 \begin{lemma}\label{l:higherdegreefixescrits}
 Let $f$ be a bicritical rational map of degree $d \geq 3$ and suppose $\phi \in \Deck(f^k)$ for some $k$. Then $\phi(\mathcal{C}_f) = \mathcal{C}_f$.
 \end{lemma}

\begin{proof}
By Lemma~\ref{l:standardfacts}, $\Deck(f) \cong \Z_d$ and is generated by the $d$-fold rotation $\rho$ which fixes the points of  $\mathcal{C}_f$ pointwise. By Theorem \ref{t:cyclicOrDihedral},  $\Deck(f^k)$ is either cyclic or dihedral.

 If $\Deck(f^k)$ is cyclic, then all non-identity elements of $\Deck(f^k)$ have the same pair of fixed points. Since by Lemma~\ref{l:standardfacts} we have $\Deck(f^k) \subseteq \Deck(f)$, we see that $\rho \in \Deck(f^k)$. Thus for all non-identity $\phi \in \Deck(f^k)$, we have $\mathrm{Fix}(\phi) = \mathrm{Fix}(\rho) = \mathcal{C}_f$.
 
 If $\Deck(f^k)$ is dihedral, then Lemma \ref{l:dihedralfacts} part \eqref{i:dihedral} implies that $\Deck(f)$ is the unique cyclic subgroup of order $d$ in $\Deck(f^k)$, 
   and the generator $\rho$ of $\Deck(f)$ is an iterate of an element of maximal order in $\Deck(f^k)$. That is, using the presentation \eqref{e:dihedralpresentation}, $\rho = R^\ell$ for some $\ell \geq 1$. Hence all powers of $R$ fix the set $\mathrm{Fix}(\rho) = \mathcal{C}_f$. Since a generator $F$ as in \eqref{e:dihedralpresentation} is an involution, it must either swap the two critical points or fix them pointwise; either way $F$ fixes $\mathcal{C}_f$. Since the generators of $\Deck(f^k)$ fix $\mathcal{C}_f$ as a set, it follows that all elements of $\Deck(f^k)$ must fix $\mathcal{C}_f$ as a set.
\end{proof}
 
\subsection{Proof of Theorem~\ref{t:deckfixescrits} in the quadratic case} 
 
In the quadratic case a more careful analysis is required, since if $\Deck(f^k)$ is dihedral, it does not immediately follow that any order $2$ subgroup of $\Deck(f^k)$ must be the group $\Deck(f)$. To study this case, we introduce terminology from \cite{KLS1}.
 
\begin{definition}[\cite{KLS1}]\label{d:critcoal} We say a bicritical rational map with critical values $v_1$ and $v_2$ is \emph{critically coalescing} if $f(v_1) = f(v_2)$.
\end{definition}

We first give some simple properties of critically coalescing maps.

\begin{lemma}\label{l:critcoalmaps}
Let $f$ be a degree $d$ critically coalescing rational map with critical points $\mathcal{C}_f = \{c_1,c_2\}$ and critical values $\mathcal{V}_f = \{ v_1,v_2\}$, with $f(c_1) = v_i$ for $i=1,2$. Then
\begin{enumerate}
\item $\mathcal{C}_f \cap \mathcal{V}_f = \varnothing$.
\item $f^k(c_1) = f^k(c_2)$ for all $k \geq 2$
\end{enumerate}
\end{lemma}

\begin{proof}\mbox{}
\begin{enumerate}
\item Without loss of generality suppose $v_1 \in \mathcal{C}_f \cap \mathcal{V}_f$. Then $\{ v_1, f(v_2) \} \subseteq f^{-1}(f(v_1))$, so $f(v_1)$ has $d+1$ preimages, counting multiplicity. This is a contradiction.
\item  This follows easily from the part (i) and the fact that $f(v_1) = f(v_2)$.
\end{enumerate}
\end{proof}

 %Note that if $f$ is critically coalescing, we must have $\mathcal{C}_f \cap \mathcal{V}_f = \varnothing$. For if $v$ were both a critical point and a critical value of a degree $d$ bicritical rational map which is critically coalescing, the image $f(v)$ would have at least $d+1$ preimages (counting multiplicity), which is impossible. Denoting the critical points of $f$ by $c_1$ and $c_2$, the condition $f(v_1) = f(v_2)$ is equivalent to the condition that $f^k(c_1) = f^k(c_2)$ for all $k \geq 2$. 
 
 The next lemma is a slight generalization of a result from \cite{KLS1}. It relates Definition~\ref{d:critcoal} with the groups $\Deck(f^k)$ for a bicritical rational map.

\begin{lemma}\label{l:dihedralimpliescritcoal}
Let $f$ be a bicritical rational map of even degree $d$. If $\Deck(f^k)$ is dihedral for some $k \in \mathbb{N}$, then $f$ is critically coalescing.
\end{lemma}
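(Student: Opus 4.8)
The plan is to pass to a normal form and reduce the conclusion to a single symmetry relation among the critical values. After conjugating I may assume $\mathcal{C}_f = \{0,\infty\}$, so that $f = M \circ g$ with $g(z) = z^d$ and $M \in \Mob$; then the critical values are $v_1 = M(0)$ and $v_2 = M(\infty)$, and $\Deck(f) = \langle \rho \rangle$ with $\rho(z) = \zeta z$, $\zeta = e^{2\pi i/d}$, fixing $0$ and $\infty$. Since $M$ is injective, $f$ is critically coalescing precisely when $v_1^d = v_2^d$; because $d$ is even, it therefore suffices to establish the relation $v_2 = -v_1$.

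To produce this relation I will exploit a reflection together with the descent machinery of Lemma~\ref{l:keylemma}. Since $\Deck(f^k)$ is dihedral, Lemma~\ref{l:dihedralfacts}\eqref{i:dihedral} identifies $\Deck(f) = \langle\rho\rangle$ as the order-$d$ subgroup of the cyclic part (for $d \ge 3$), whose common fixed set is $\mathcal{C}_f$; hence every reflection in $\Deck(f^k)$ swaps $0$ and $\infty$. Fixing such a reflection $\phi_k \in \Deck(f^k)$, I invoke Lemma~\ref{l:keylemma} repeatedly to build a chain $\phi_k, \phi_{k-1}, \ldots, \phi_1$ with $\phi_j \in \Deck(f^j)$, $f\circ\phi_{j+1} = \phi_j\circ f$, and $\phi_j(\mathcal{V}_f) = \mathcal{V}_f$. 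Here Theorem~\ref{t:deckfixescrits}, available for $d \ge 3$ by Lemma~\ref{l:higherdegreefixescrits}, guarantees $\phi_j(\mathcal{C}_f) = \mathcal{C}_f$ at each stage, so each $\phi_j$ either fixes $\{0,\infty\}$ pointwise or interchanges the two points. A direct computation from $f\circ\phi_{j+1} = \phi_j\circ f$, namely $\phi_j(v_1) = \phi_j(f(0)) = f(\phi_{j+1}(0))$, shows that whenever $\phi_{j+1}$ swaps $0$ and $\infty$, the map $\phi_j$ swaps $v_1$ and $v_2$.

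The descent is closed off by a pigeonhole step. The top map $\phi_k$ swaps $\mathcal{C}_f$, whereas $\phi_1 \in \Deck(f) = \langle\rho\rangle$ fixes $\mathcal{C}_f$ pointwise; let $j^\ast$ be the largest index for which $\phi_{j^\ast}$ fixes $0$ and $\infty$ pointwise, so that $j^\ast < k$. By maximality $\phi_{j^\ast+1}$ does not fix $\mathcal{C}_f$ pointwise, hence interchanges $0$ and $\infty$, and so $\phi_{j^\ast}$ swaps $v_1$ and $v_2$. Being a Möbius transformation that fixes $0$ and $\infty$ and interchanges the distinct points $v_1, v_2$, the map $\phi_{j^\ast}$ must equal $z \mapsto -z$, forcing $v_2 = -v_1$. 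This yields $v_1^d = v_2^d$ and hence critical coalescence for every even $d \ge 4$.

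The main obstacle is the quadratic case $d = 2$, which is exactly the regime in which Theorem~\ref{t:deckfixescrits} is not yet at my disposal, so that neither the existence of a reflection swapping $\mathcal{C}_f$ nor the invariance $\phi_j(\mathcal{C}_f) = \mathcal{C}_f$ along the chain may be taken for granted; a priori a finite dihedral subgroup of $\Mob$ containing $\rho(z) = -z$ could have its rotation axis transverse to $\{0,\infty\}$, moving the critical set. For $d = 2$ I would therefore argue directly from the explicit classification of finite dihedral subgroups of $\Mob$, together with the constraint $\langle\rho\rangle = \Deck(f) \subseteq \Deck(f^k)$, to locate an involution that fixes $\mathcal{C}_f$ pointwise while swapping $\mathcal{V}_f$, again recovering $v_2 = -v_1$; this recovers the original quadratic statement of \cite{KLS1} that the present lemma generalizes.
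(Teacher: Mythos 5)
Your argument for even degree $d \geq 4$ is correct, and it takes a genuinely different route from the paper's: you normalize $\mathcal{C}_f = \{0,\infty\}$, use Lemma~\ref{l:higherdegreefixescrits} to know that every element of every $\Deck(f^j)$ preserves $\{0,\infty\}$, run the descent chain of Lemma~\ref{l:keylemma} from a reflection $\phi_k$ down to $\phi_1 \in \Deck(f)$, and extract $v_2 = -v_1$ by pigeonholing on the largest index $j^\ast$ at which the chain element fixes $0$ and $\infty$ pointwise. This is non-circular, since in the paper Lemma~\ref{l:higherdegreefixescrits} is proved before, and independently of, the present lemma. One point you leave implicit: the element $\phi_{j^\ast}$ fixes $0,\infty$ pointwise \emph{and} interchanges $v_1,v_2$, which already forces $v_1, v_2 \notin \{0,\infty\}$, so writing $\phi_{j^\ast}(z)=cz$ and concluding $c=-1$ is legitimate; this should be said explicitly, but it is not a gap.

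The genuine gap is the case $d=2$, which your proposal does not prove: the last paragraph is a plan, not an argument, and as stated the plan essentially restates the goal. With $\mathcal{C}_f = \{0,\infty\}$, the only M\"obius involution fixing $\mathcal{C}_f$ pointwise is $\rho(z)=-z$, and ``$\rho$ swaps $\mathcal{V}_f$'' is exactly the assertion that $f$ is critically coalescing; so ``locating an involution that fixes $\mathcal{C}_f$ pointwise while swapping $\mathcal{V}_f$'' is the thing to be proven, and the classification of finite dihedral subgroups of $\Mob$ cannot by itself deliver it --- as you yourself observe, a dihedral group containing $\rho$ could a priori have its rotation axis transverse to $\{0,\infty\}$, with $\rho$ among the flip involutions, and pure group theory does not exclude this configuration. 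The paper closes precisely this gap without any appeal to Theorem~\ref{t:deckfixescrits}: take $k$ minimal with $\Deck(f^k)$ dihedral; by Lemma~\ref{l:dihedralfacts} the involution $\mu \in \Deck(f)$ lies in a subgroup $\Gamma \cong V_4$ of $\Deck(f^k)$; Lemma~\ref{l:Beardon} (commutation with $\mu$, whose fixed set is $\mathcal{C}_f$) forces the other involutions $\alpha \in \Gamma$ to satisfy $\alpha(\mathcal{C}_f)=\mathcal{C}_f$; Lemma~\ref{l:keylemma} then descends $\alpha$ to a nontrivial $\nu \in \Deck(f^{k-1})$ with $\nu(\mathcal{V}_f)=\mathcal{V}_f$; minimality of $k$ makes $\Deck(f^{k-1})$ cyclic, so $\mathrm{Fix}(\nu)=\mathcal{C}_f$ and $\nu = \mu$, and since $f$ cannot be a power map, $\mu$ must swap $\mathcal{V}_f$, i.e.\ $f$ is critically coalescing. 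That argument is uniform in even $d$, so adopting it would also eliminate your case split. Note finally that you cannot repair $d=2$ by first establishing Theorem~\ref{t:deckfixescrits} in degree $2$: in the paper the quadratic case of that theorem (Proposition~\ref{p:quadraticdeckfixescrits}, via Lemma~\ref{l:periodiccritpoint}) depends on the present lemma, so that route is circular.
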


\begin{proof}
Write $\Deck(f) = \langle \tau \rangle$ and suppose $\mu = \tau^{d/2}$ is the unique element of order $2$ in $\Deck(f)$. Suppose $k>1$ is minimal such that $\Deck(f^k)$ is dihedral. By Lemma~\ref{l:dihedralfacts}, there exists $\Gamma$, a subgroup of $\Deck(f^k)$ such that $\Gamma \cong V_4$ and $\mu \in \Gamma$. Write $\Gamma = \{ \mathrm{id}, \mu, \alpha, \beta \}$, where $\alpha$ and $\beta$ are order $2$ elements of $\Deck(f^k)$. Since $\Gamma$ is abelian and $\mathrm{Fix}(\mu) = \mathcal{C}_f$, then by Lemma~\ref{l:Beardon} we have $\alpha(\mathcal{C}_f) = \beta(\mathcal{C}_f) = \mathcal{C}_f$. Thus, by Lemma~\ref{l:keylemma}, there exists $\nu \in \Deck(f^{k-1})$ such that $\nu \circ f = f \circ \alpha$. Furthermore,  $\nu(\mathcal{V}_f ) = \mathcal{V}_f = \{v_1, v_2 \}$. Since $\alpha$ is an order $2$ element of $\Deck(f^k)$ distinct from $\mu$, it cannot be an element of $\Deck(f)$. Hence $f \circ \alpha \neq f$, and so $\nu \neq \mathrm{id}$. By the assumption on the minimality of $k$, $\Deck(f^{k-1})$ must be cyclic. Since all non-identity elements of a finite cyclic group of M\"obius transformations share the same pair of fixed points, we see that for all non-identity elements $\gamma \in \Deck(f^{k-1})$ we have $\mathrm{Fix}(\gamma) = \mathrm{Fix}(\mu) = \mathcal{C}_f$. In particular $\mathrm{Fix}(\nu) = \mathcal{C}_f$.

Suppose that $\nu$ fixes the elements of $\mathcal{V}_f$ pointwise. Then we have $\mathcal{C}_f = \mathcal{V}_f$, and so $f$ is a power map. But this is impossible, since $\Deck(f^k)$ is always cyclic for power maps. So $\nu$ must swap the elements of $\mathcal{V}_f$, and so $\nu$ is an involution. But since $\Deck(f^{k-1})$ is cyclic, it contains at most one involution. But since $\mu$ is an order $2$ element of $\Deck(f) \subseteq \Deck(f^{k-1})$, me must have $\nu = \mu$. Thus $\nu = \mu \in \Deck(f)$ interchanges the elements of $\mathcal{V}_f$, and so $f(v_1) = f(v_2)$.
\end{proof}

It was shown in \cite{KLS1} that the converse to the above lemma is true in the quadratic case. However, in Example~\ref{ex:critcoalconversefalse} we will show that for higher degrees, the converse to Lemma \ref{l:dihedralimpliescritcoal} does not hold. 

% Our aim is to show that if $f$ is quadratic and $\phi \in \Deck(f^k)$, then $\phi(\mathcal{C}_f) = \mathcal{C}_f$. In the case where $\Deck(f^k)$is cyclic or isomorphic to $V_4$, the argument is simple. To deal with the case where $\Deck(f^k)$ is dihedral and non-abelian requires a bit more work. First we prove a general statement about the deck group of a rational map.

The following is a simple topological observation.

\begin{lemma}\label{l:localdegree}
 Let $F$ be a rational map of degree $d$ and suppose $\Deck(F)$ contains an element of order $k$. Then there exists $z \in \hatC$ such that $\deg_{F}(z) \geq k$. 
\end{lemma}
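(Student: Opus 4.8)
The plan is to show that an order-$k$ element $\phi \in \Deck(F)$ forces a point of local degree at least $k$ by exploiting the fact that $\phi$ permutes fibers of $F$ while fixing $F$ itself. First I would recall that since $\phi \in \Deck(F)$, we have $F \circ \phi = F$, so for any $w \in \hatC$ the M\"obius transformation $\phi$ maps the fiber $F^{-1}(w)$ to itself. Thus the cyclic group $\langle \phi \rangle \cong \Z_k$ acts on each fiber, and the orbits of this action have size dividing $k$ (hence size $1$ or a divisor of $k$). My goal is to locate a point where the local degree of $F$ absorbs the full order $k$.

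The key step is to examine the two fixed points of $\phi$. Since $\phi$ has finite order $k \geq 2$, by the standard fact (Lemma~\ref{l:standardfacts}(2)) it has exactly two fixed points, say $p$ and $q$. Because $F \circ \phi = F$, these fixed points must map to critical values: more precisely I would argue that $p$ is a critical point of $F$ with local degree divisible by $k$. The cleanest way to see this is to pass to local coordinates centered at $p$ and at $F(p)$. In suitable coordinates, $\phi$ acts near its fixed point $p$ as multiplication by a primitive $k$-th root of unity $\zeta$, i.e. $\phi(z) = \zeta z + O(z^2)$ after linearizing (a finite-order M\"obius map is conjugate to $z \mapsto \zeta z$, so this is exact in the right chart). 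Writing $F$ near $p$ as $F(z) = c\, z^{m} + \cdots$ where $m = \deg_p(F)$, the relation $F(\phi(z)) = F(z)$ becomes $c\,\zeta^{m} z^{m} + \cdots = c\, z^{m} + \cdots$, which forces $\zeta^{m} = 1$, i.e. $k \mid m$. Hence $\deg_p(F) = m \geq k$, giving the desired point $z = p$.

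The main obstacle will be handling the local linearization carefully: one must choose coordinates in which $\phi$ is exactly $z \mapsto \zeta z$ (using that any finite-order M\"obius transformation is conjugate to a rotation, so its action is linearizable at each fixed point) and simultaneously expand $F$ as a power series with leading term of degree equal to the local degree. Once both normalizations are in place, matching the lowest-order terms of the identity $F \circ \phi = F$ is routine and yields $\zeta^m = 1$. I would note that the argument only uses the behavior at a single fixed point, so no global counting of fibers is actually needed; the orbit-size observation in the first paragraph merely motivates why a fixed point of $\phi$ is the natural place to find concentrated local degree. This completes the plan.
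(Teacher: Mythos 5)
Your proof is correct, but it takes a genuinely different route from the paper's. Both arguments begin at a fixed point of the order-$k$ element $\phi$, but the paper proceeds topologically: it chooses a simply connected neighborhood $V$ of $F(z)$ meeting $\mathcal{V}_F$ only possibly in $F(z)$, takes the component $U$ of $F^{-1}(V)$ containing the fixed point $z$, shrinks $V$ so that $F^{-1}(F(z)) \cap U = \{z\}$, and then observes that since $\phi^j$ fixes $z$ and preserves $F$, each $\phi^j$ maps $U$ to itself; hence the $\langle \phi \rangle$-orbit of any preimage $u_0 \in U$ of a point $w \in V$ stays in $U$, so $F$ is at least $k$-to-one on $U$ and $\deg_z(F) \geq k$ follows by Riemann--Hurwitz. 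Your argument is instead purely local-analytic: conjugate so that $\phi$ is exactly $z \mapsto \zeta z$ with $\zeta$ a primitive $k$-th root of unity (legitimate, since a finite-order M\"obius map is elliptic and its multiplier at a fixed point of an order-$k$ element is a primitive $k$-th root of unity), expand $F$ in charts at $p$ and $F(p)$, and match leading coefficients in $F \circ \phi = F$ to get $\zeta^m = 1$, hence $k \mid m$. Your version is shorter and actually yields a slightly stronger conclusion, namely the divisibility $k \mid \deg_p(F)$ rather than just the inequality; it also sidesteps a small point the paper's proof must implicitly handle (that the orbit points $\phi^j(u_0)$ are pairwise distinct for generic $u_0$, which holds because all nontrivial powers of $\phi$ share the same two fixed points). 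What the paper's approach buys is consistency of technique: its fiber-counting style matches the orbit arguments used elsewhere (e.g., in the proof of Proposition~\ref{p:primeorders}) and avoids invoking linearization and power-series normalization. One small thing to make explicit when writing yours up: the conjugations at source and target must be applied coherently, i.e.\ replace $F$ by $N \circ F \circ M$ and $\phi$ by $M^{-1} \circ \phi \circ M$, so that the deck relation and the local degrees are both preserved.
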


\begin{proof}
 First notice that if $\phi$ has order $k$, then for every $\zeta \notin \mathrm{Fix}(\phi)$, the points  $\zeta, \phi(\zeta), \dotsc , \phi^{k-1}(\zeta)$ must all be distinct. Otherwise, if $\phi^{i}(\zeta) = \phi^{j}(\zeta)$ for some $0 \leq i < j \leq k-1$, then $\phi^{j-i}$ fixes the point $\phi^{i}(\zeta)$. But then $\mathrm{Fix}(\phi^{j-i}) \supseteq \mathrm{Fix}(\phi) \cup \{ \phi^{i} \}$, so that $|\mathrm{Fix}(\phi^{j-1})| \geq 3$. This implies $\phi^{j-i} = \mathrm{id}$. Since $j - i < k$, this is a contradiction.
 
 Let $z \in \mathrm{Fix}(\phi)$. Suppose that $V$ is a simply connected neighborhood of $F(z)$ such that $V \cap \mathcal{V}_F \subseteq \{ F(z) \}$, and let $U$ be the component of $F^{-1}(V)$ which contains $z$. By restricting $V$ if necessary, we may assume that $F^{-1}(F(z)) \cap U = \{z\}$ and that $z$ is the only element of $\mathrm{Fix}(\phi)$ in $U$. 
 
 Let $w \neq F(z)$ be an element of $V$.  Then there exists $u_0 \in U$ such that $F(u_0) = w$. For each $0 \leq j \leq k-1$, define $u_j = \phi^j(u_0)$. From the above, we know that $u_i \neq u_j$ for $i \neq j$. Furthermore, since $F = F \circ \phi^j$ and $\phi^j(z) = z$ for all $0 \leq j \leq k-1$, we see that $u_j \in U$ for all $0 \leq j \leq k-1$. Hence $F \colon U \to V$ is a branched covering of degree (at least) $k$ on the simply connected set $U$. Since the only critical point in $U$ is $z$, we have $\deg_{F}(z) \geq k$ by the Riemann-Hurwitz Theorem.
\end{proof}

We are particularly interested in applying the previous result to the case where $F = f^k$ is an iterate of a quadratic rational map $f$ and $\Deck(f^k)$ is dihedral.

\begin{lemma}\label{l:periodiccritpoint}
Let $f$ be a quadratic rational map and suppose $\Deck(f^k)$ is dihedral. If $f^k$ has a critical point with local degree greater than $2$, then one of the critical points $c_1$ of $f$ is periodic of some period $p$. Furthermore:
\begin{itemize}
 \item the second critical point $c_2$ satisfies $f^p(c_2) = c_1$ and
 \item for either critical point $c$, $f^n(c) = c_1$ if and only if $n = ap$ for some $a \geq 0$.
\end{itemize}

\end{lemma}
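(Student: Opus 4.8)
The plan is to extract combinatorial information from the local-degree hypothesis and then feed it into the fact that the dihedral hypothesis forces $f$ to be critically coalescing, which makes the two critical orbits merge after two steps.

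First I would record the consequences of the standing hypotheses. Since $\deg(f)=2$ is even and $\Deck(f^k)$ is dihedral, Lemma~\ref{l:dihedralimpliescritcoal} says $f$ is critically coalescing, i.e.\ $f(v_1)=f(v_2)$. As noted immediately after the definition of critically coalescing, this also gives $\mathcal{C}_f\cap\mathcal{V}_f=\varnothing$, and, applying $f^{m-2}$ to $f^2(c_1)=f^2(c_2)$, it gives $f^m(c_1)=f^m(c_2)$ for every $m\ge 2$. These three facts are the engine of the whole argument.

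Next I would unwind the hypothesis that $f^k$ has a critical point $z$ with $\deg_z(f^k)>2$. Because $f$ is quadratic, multiplicativity of local degrees along orbits yields $\deg_z(f^k)=\prod_{j=0}^{k-1}\deg_{f^j(z)}(f)=2^m$, where $m$ is the number of indices $0\le j\le k-1$ with $f^j(z)\in\mathcal{C}_f$. The inequality $\deg_z(f^k)>2$ forces $m\ge 2$, so there are $0\le j_1<j_2\le k-1$ with $c:=f^{j_1}(z)\in\mathcal{C}_f$ and $f^{N}(c)\in\mathcal{C}_f$, where $N:=j_2-j_1\ge 1$. In words, some critical point lands on a critical point under a positive iterate. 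Here $N=1$ is impossible, since $f(c)$ is a critical value and $f(c)\in\mathcal{C}_f$ would contradict $\mathcal{C}_f\cap\mathcal{V}_f=\varnothing$; hence $N\ge 2$. Critical coalescence then gives $f^N(c_1)=f^N(c_2)$, and since this common value is the critical point $f^N(c)$, after relabelling we may call it $c_1$. Thus $f^N(c_1)=c_1$ (so $c_1$ is periodic) and $f^N(c_2)=c_1$. Letting $p$ be the exact period of $c_1$, we have $p\mid N$, and $p\ge 2$ because $p=1$ would give $v_1=f(c_1)=c_1$, again contradicting $\mathcal{C}_f\cap\mathcal{V}_f=\varnothing$.

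From here the two bullet points fall out. For the first, $p\ge 2$ and coalescence give $f^p(c_2)=f^p(c_1)=c_1$. For the second, the equivalence for $c=c_1$ is just the characterisation of the exact period. For $c=c_2$, if $f^m(c_2)=c_1$ then $m\ge 2$ (as $m=1$ would force $v_2=c_1$ and $m=0$ gives $c_2\ne c_1$), so coalescence yields $f^m(c_1)=f^m(c_2)=c_1$ and hence $p\mid m$; conversely $f^{ap}(c_2)=f^{(a-1)p}(c_1)=c_1$ for $a\ge 1$, matching $n=ap$ (with $a\ge 1$ in the $c_2$ case). I expect the only genuinely delicate step to be the passage from ``\emph{some} critical point hits a critical point'' to ``\emph{both} critical points hit the same periodic critical point'': this is exactly where critical coalescence is indispensable, and where the correct relabelling together with the exclusions $N=1$ and $p=1$ (all via $\mathcal{C}_f\cap\mathcal{V}_f=\varnothing$) must be handled carefully. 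The remaining period bookkeeping is routine.
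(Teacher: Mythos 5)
Your proposal is correct and follows essentially the same route as the paper: invoke Lemma~\ref{l:dihedralimpliescritcoal} to get critical coalescence (hence $\mathcal{C}_f\cap\mathcal{V}_f=\varnothing$ and merged orbits for $\ell\ge 2$), use multiplicativity of local degrees to find two critical-point hits along an orbit, and then use coalescence to produce the periodic critical point and do the period bookkeeping. If anything, your write-up is slightly more careful than the paper's at the edge cases (excluding $N=1$ and $p=1$ via disjointness, and proving the divisibility claim for all $n$ rather than just $0<j<p$), but the underlying argument is the same.
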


\begin{proof}
By Lemma~\ref{l:dihedralimpliescritcoal}, $f$ must be critically coalescing. Let $\mathcal{C}_f = \{ c_1, c_2\}$.  Since $f$ is critically coalescing, we see that by Lemma~\ref{l:critcoalmaps} that $f(c_i) \notin \mathcal{C}_f$ for $i=1,2$. 

A point $z \in \hatC$ maps forward with local degree greater than $1$ under $f^k$ if and only if  $z$ is a preimage $f^{-j}(c_i)$ for some $0 \leq j < k$ and $i =1,2$. Furthermore, if $z$ maps forward by local degree strictly greater than $2$ under $f^k$, then the forward orbit
\[
 \mathcal{O}_k(z) = ( z, f(z), f^2(z), \dotsc, f^{k-1}(z) )
\]
must contain (at least) two critical points of $f$. If the same critical point $c_i$ appears twice, we are done, since then that critical point would be periodic. So assume without loss of generality that there exist $0 \leq n < m < k$ with $f^n(z) = c_2$ and $f^m(z) = c_1$. Then we have $f^{m-n}(c_2) = c_1$. But since $f$ is critically coalescing, then by Lemma~\ref{l:critcoalmaps} we have $f^{\ell}(c_1) = f^{\ell}(c_2)$ for all $\ell \geq 2$. Thus $f^{m-n}(c_1) = c_1$ and so $c_1$ is a periodic critical point under $f$.

Now suppose that $p$ is the period of $c_1$, so that $p>0$ is minimal such that $f^p(c_1) = c_1$. Since $f$ is critically coalescing, we also have $f^p(c_2) = c_1$, and if there were $0<j<p$ such that $f^j(c_2) = c_1$, then this would imply $f^j(c_1) = c_1$, which is a contradiction.
\end{proof}

\begin{proposition}\label{p:quadraticdeckfixescrits}
Let $f$ be a quadratic rational map and suppose that for some $k \in \mathbb{N}$ the group $\Deck(f^k)$ is dihedral. If $\phi \in \Deck(f^k)$ then $\phi(\mathcal{C}_f) = \mathcal{C}_f$.
\end{proposition}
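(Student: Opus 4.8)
The plan is to reduce the statement to a single group-theoretic assertion about the deck involution $\mu$ generating $\Deck(f)\cong\Z_2$, and then to rule out the one remaining geometric configuration using the periodicity established in Lemma~\ref{l:periodiccritpoint}.

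\smallskip

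First, by Proposition~\ref{p:primeorders} the group $\Deck(f^k)$ is a $2$-group, so if it is dihedral then $\Deck(f^k)\cong D_{2n}$ with $n$ a power of $2$; in particular $n$ is even and $n\ge 2$. I claim it suffices to prove that $\mu$ lies in the center of $\Deck(f^k)$. Indeed, if $\mu$ is central then every $\phi\in\Deck(f^k)$ commutes with $\mu$, so by Lemma~\ref{l:Beardon} we get $\phi(\mathrm{Fix}(\mu))=\mathrm{Fix}(\mu)$; since $\mathrm{Fix}(\mu)=\mathcal C_f$ by Lemma~\ref{l:standardfacts}, this gives $\phi(\mathcal C_f)=\mathcal C_f$. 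When $\Deck(f^k)\cong V_4$ the group is abelian and there is nothing to prove, so assume $n\ge 4$. Writing $D_{2n}=\langle R,F\rangle$ as in \eqref{e:dihedralpresentation}, the center is $\{\mathrm{id},R^{n/2}\}$, and since an involution is determined by its two fixed points, $\mu$ is central exactly when $\mu=R^{n/2}$, i.e.\ when $\mathrm{Fix}(\mu)=\mathrm{Fix}(R)$. Thus the whole proposition reduces to showing $\mathcal C_f=\mathrm{Fix}(R)$.

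\smallskip

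Next I would exploit the central involution $\nu:=R^{n/2}$, whose fixed set is the rotation axis $\mathrm{Fix}(\nu)=\mathrm{Fix}(R)$. Since $\nu$ is central, Lemma~\ref{l:Beardon} already forces every element of $\Deck(f^k)$, and in particular $\mu$, to preserve $\mathrm{Fix}(R)$ as a set. As $\mu$ is an involution this leaves two possibilities: either $\mu$ fixes both points of $\mathrm{Fix}(R)$, in which case $\mathrm{Fix}(R)\subseteq\mathrm{Fix}(\mu)=\mathcal C_f$ and we are done, or $\mu$ interchanges the two points of $\mathrm{Fix}(R)$. I would assume the latter and derive a contradiction. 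In this case $\nu$ commutes with $\mu$ but $\mathrm{Fix}(\nu)\ne\mathcal C_f$, so by Lemma~\ref{l:Beardon} the map $\nu$ preserves $\mathcal C_f$ without fixing its points; that is, $\nu$ swaps the two critical points $c_1\leftrightarrow c_2$. Writing $\{N,S\}=\mathrm{Fix}(R)$, the relation $f\circ\mu=f$ together with $\mu(N)=S$ yields $f(N)=f(S)$; since $\deg(f)=2$ and $N\ne S$, this says $\{N,S\}$ is a complete fiber $f^{-1}(q)$, disjoint from $\mathcal C_f$, that is fixed pointwise by the rotation $R$ of order $n\ge 4$.

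\smallskip

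The final, and hardest, step is to contradict this configuration. Applying Lemma~\ref{l:localdegree} to the order-$n$ element $R$ produces a point, necessarily in $\mathrm{Fix}(R)=\{N,S\}$, at which $f^k$ has local degree at least $n>2$, so Lemma~\ref{l:periodiccritpoint} applies and one of the critical points, $c_1$, is periodic of some period $p$ with $f^p(c_2)=c_1$. I would then normalize so that $N=0$, $S=\infty$, $R(z)=\zeta z$ for a primitive $n$-th root of unity $\zeta$, and $\mu(z)=\lambda/z$; the invariance $f\circ\mu=f$ forces $\mathcal C_f=\{\pm\sqrt{\lambda}\}$ and $f=g\circ\pi$ with $\pi(z)=z+\lambda/z$ and $g\in\Mob$. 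The constraint $f^k(\zeta z)=f^k(z)$ then says the rotates $\zeta^{\,j}\sqrt{\lambda}$ are all critical points of $f^k$ of the same local degree as $c_1$, hence all lie in the (eventually periodic, ``$\rho$-shaped'') grand orbit of the critical points described by Lemma~\ref{l:periodiccritpoint}. The main obstacle is precisely to show that requiring all $n\ge 4$ of these rotates to lie in this grand orbit is incompatible with $f$ not being a power map: it should force either $\zeta=\pm 1$ (so $n\le 2$) or $\mathcal C_f=\mathcal V_f$, and in either case this contradicts the standing assumptions. This is the genuinely quadratic-specific computation flagged in the introduction as the crux of the section.
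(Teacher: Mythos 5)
Your reduction is correct as far as it goes, but it stops exactly where the actual proof has to begin, so there is a genuine gap. The group-theoretic part is fine: $\Deck(f^k)$ is a $2$-group by Proposition~\ref{p:primeorders} (via Cauchy's theorem), so $n$ is even and $Z(D_{2n})=\{\mathrm{id},R^{n/2}\}$ is nontrivial; $\mu$ is central if and only if $\mathrm{Fix}(\mu)=\mathrm{Fix}(R)$, since a M\"obius involution is determined by its fixed pair; and if $\mu$ is not central, Lemma~\ref{l:Beardon} puts you in the configuration where $\mathrm{Fix}(R)=\{N,S\}$ is a full fiber $f^{-1}(q)$ disjoint from $\mathcal{C}_f$, pointwise fixed by a rotation of order $n\geq 4$, with $\nu=R^{n/2}$ swapping $c_1$ and $c_2$. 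But ruling out this configuration \emph{is} the proposition: everything up to that point is soft (Beardon plus elementary group theory), and your final paragraph does not rule it out. It writes down the normal form $f=g\circ\pi$ with $\pi(z)=z+\lambda/z$, observes that the rotates $\zeta^{j}\sqrt{\lambda}$ are critical points of $f^k$, and then asserts that this ``should force'' $\zeta=\pm 1$ or $\mathcal{C}_f=\mathcal{V}_f$. No argument is given for that implication, and your own phrase ``the main obstacle is precisely to show\dots'' is an accurate description of what is missing. This missing step is precisely the quadratic-specific difficulty the paper's proof exists to overcome, so the proposal cannot be counted as a proof.

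For comparison, the paper closes exactly this gap with a counting argument along orbits, not with the normal form, and your setup can be finished the same way. Since $R\in\Deck(f^k)\subseteq\Deck(f^m)$ for all $m\geq k$ by Lemma~\ref{l:standardfacts}(4), part (3) of that lemma gives $\deg_{f^m}\bigl(R^{j}(c_1)\bigr)=\deg_{f^m}(c_1)$ for every $m\geq k$; the right-hand side is unbounded in $m$ because $c_1$ is a periodic critical point, so each rotate $a=R^{j}(c_1)$ hits critical points infinitely often and hence eventually lands on $c_1$. Now count: for $m=jp+1\geq k$ (with $p$ the period of $c_1$), the segment $(a,f(a),\dotsc,f^{jp}(a))$ must contain exactly $j+1$ critical points, since that is the count for $c_1$; but Lemma~\ref{l:periodiccritpoint} forces all critical visits of $a$ to occur at times congruent modulo $p$ to the first one, so if $a$ were not itself critical there could be at most $j$ such visits. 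Hence every rotate $R^{j}(c_1)$ lies in $\mathcal{C}_f$, so $R^2$ fixes $c_1$; as $R^2$ also fixes $N$ and $S$, it fixes three points and is the identity, contradicting $n\geq 4$. This is, in essence, the paper's argument that $\mathrm{orb}_{\Deck(f^k)}(c_1)=\mathcal{C}_f$ (the paper applies it to the whole group orbit at once, which also makes your centrality reduction unnecessary). Without this or an equivalent argument, your proof is incomplete at its crux.
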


\begin{proof}
If $\Deck(f^k)$ is isomorphic to $V_4$, then $\Deck(f^k)$ is abelian. Thus every element of $\Deck(f^k)$ commutes with $\mu$, the unique order $2$ element of $\Deck(f)$. Hence by Lemma~\ref{l:Beardon}, since $\mathrm{Fix}(\mu) = \mathcal{C}_f$, we have $\phi(\mathcal{C}_f) = \mathcal{C}_f$ for all $\phi \in \Deck(f^k)$.

We now assume that $\Deck(f^k)$ is dihedral and contains an element of order $n>2$. By Lemma~\ref{l:localdegree}, there must exist $z \in \hatC$ such that $\deg_{f^k}(z) \geq n > 2$ and so by Lemma~\ref{l:periodiccritpoint}, $f$ has a periodic critical point, $c_1$, and the other critical point $c_2$ eventually maps onto $c_1$, but is not in the forward orbit of $c_1$. We will show that the orbit $\mathrm{orb}_{\Deck(f^k)}(c_1)$ under the action of $\Deck(f^k)$ is equal to $\mathcal{C}_f = \{ c_1, c_2\}$. To see that $c_2 \in \mathrm{orb}_{\Deck(f^k)}(c_1)$, let $\Gamma$ be a subgroup of $\Deck(f^k)$ which is isomorphic to $V_4$ and which contains $\mu$ (such a subgroup exists by Lemma~\ref{l:dihedralfacts}). By Lemma~\ref{l:Beardon}, every non-identity element $\phi \in \Gamma$ must have $\phi(\mathcal{C}_f) = \mathcal{C}_f$. In particular, if $\phi \neq \mu$, then since $\mu$ and $\phi$ are distinct and both have order $2$, we see that $\phi$ must transpose the elements of $\mathrm{Fix}(\mu) = \mathcal{C}_f$. Hence $\phi(c_1) = c_2$, meaning $\{c_1,c_2\} = \mathcal{C}_f \subseteq \mathrm{orb}_{\Deck(f^k)}(c_1)$.

To prove the reverse inclusion, suppose that $a \in \mathrm{orb}_{\Deck(f^k)}(c_1)$, so that there exists $\phi \in \Deck(f^k)$ such that $\phi(c_1) = a$. Let $p \geq 2$ be the period of $c_1$. By Lemma~\ref{l:standardfacts}, we have $\deg_{f^s}(a) = \deg(f^s)(c_1)$ for all $s \geq k$. Thus, for all $j \geq 0$ and $1 \leq m \leq p$ such that $jp +m \geq k$ we have 
\begin{equation}\label{e:localdegree}
\deg_{f^{jp + m}}(a) = \deg_{f^{jp + m}}(c_1) = 2^{j+1}.
\end{equation}
Since $\deg_{f^{jp + m}}(a)>1$, it follows that $a$ must eventually map onto a critical point. Let $q \geq 0$ be minimal such that $f^q(a) \in \mathcal{C}_f$.

We now show that $a \in \mathcal{C}_f$. Let $j$ be minimal such that $jp + 1 \geq k$. Then by \eqref{e:localdegree}, we have $\deg_{f^{jp + 1}}(a) = 2^{j+1}$ and so the orbit
\[
 \mathcal{O}_{jp}(a) = ( a, f(a), f^2(a), \dotsc, f^{jp}(a) )
\]
must contain exactly $j+1$ critical points. Suppose $a \notin \mathcal{C}_f$ so that $q >0$. By Lemma~\ref{l:periodiccritpoint}, the points $f^q(a),f^{p+q}(a),f^{2p+q}(a),\dotsc $ are all critical. But since $q>0$, the inequality $ip + q \leq jp$ has at most $j$ solutions for $i \geq 0$. Thus there are at most $j$ critical points in the orbit $\mathcal{O}_{jp}(a)$, which is a contradiction. Thus $q=0$ and so $a \in \mathcal{C}_f$. Hence $\mathrm{orb}_{\Deck(f^k)}(c_1) \subseteq \mathcal{C}_f$. We conclude that $\mathrm{orb}_{\Deck(f^k)}(c_1) = \mathrm{orb}_{\Deck(f^k)}(c_1) = \mathcal{C}_f$. It follows that $\phi(\mathcal{C}_f) = \mathcal{C}_f$ for all $\phi \in \Deck(f^k)$.
\end{proof}

 \begin{proof}[Proof of Theorem~\ref{t:deckfixescrits}]
 In light of Lemma~~\ref{l:higherdegreefixescrits}, we only need to prove the result in the quadratic case. By Theorem~\ref{t:cyclicOrDihedral}, $\Deck(f^k)$ is either cyclic or dihedral. If $\Deck(f^k)$ is cyclic, then every non-identity element of $\Deck(f^k)$ has the same set of fixed points. But since the unique order $2$ element $\mu \in \Deck(f)$ has $\mathrm{Fix}(\mu) = \mathcal{C}_f$. Since $\mu \in \Deck(f^k)$, we see that for all non-identity $\phi \in \Deck(f^k)$ we have $\mathrm{Fix}(\phi) = \mathcal{C}_f$. Hence in this case $\phi(\mathcal{C}_f) = \mathcal{C}_f$. On the other hand, if $\Deck(f^k)$ is dihedral then the result holds by Proposition~\ref{p:quadraticdeckfixescrits}.
 \end{proof}
 
 \section{Consequences of Theorem~\ref{t:deckfixescrits}}\label{s:consequences}
 
 Theorem~\ref{t:deckfixescrits} has a number of useful consequences. We first state a strengthened version of Lemma~\ref{l:keylemma}.

\begin{proposition}\label{p:keyprop}
 Let $f$ be a bicritical rational map and $\phi_k \in \Deck(f^k)$ for some $k$. Then there exists a unique $\phi_{k-1} \in \Deck(f^{k-1})$ such that $f \circ \phi_k = \phi_{k-1} \circ f$. Moreover $\phi_{k-1}(\mathcal{V}_f) = \mathcal{V}_f$.
\end{proposition}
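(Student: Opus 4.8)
The plan is to recognize that Proposition~\ref{p:keyprop} is precisely Lemma~\ref{l:keylemma} with its standing hypothesis $\phi(\mathcal{C}_f) = \mathcal{C}_f$ removed. That hypothesis is no longer needed, because Theorem~\ref{t:deckfixescrits}, which we have just established, guarantees that \emph{every} element of $\Deck(f^k)$ preserves $\mathcal{C}_f$ setwise. In effect the substantive content of the proposition has already been proved; what remains is to assemble the pieces.

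Concretely, the first step is to apply Theorem~\ref{t:deckfixescrits} to the given $\phi_k \in \Deck(f^k)$ to conclude $\phi_k(\mathcal{C}_f) = \mathcal{C}_f$. With this in hand, for $k \geq 2$ I would invoke Lemma~\ref{l:keylemma} directly: it produces the unique $\phi_{k-1} \in \Deck(f^{k-1})$ satisfying $f \circ \phi_k = \phi_{k-1} \circ f$, together with $\phi_{k-1}(\mathcal{V}_f) = \mathcal{V}_f$. Uniqueness throughout is automatic from the surjectivity of $f$, exactly as noted in the proof of Lemma~\ref{l:Mobiuslemma}: if $\mu_1 \circ f = \mu_2 \circ f$ then $\mu_1 = \mu_2$.

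The only point needing separate attention is the base case $k = 1$, since Lemma~\ref{l:keylemma} is stated for $k \geq 2$. Here the claim is essentially vacuous under the natural convention that $\Deck(f^0) = \Deck(\mathrm{id})$ is trivial: for $\phi_1 \in \Deck(f)$ we have $f \circ \phi_1 = f$ by definition, so $\phi_0 = \mathrm{id}$ is the unique element of $\Deck(f^0)$ with $f \circ \phi_1 = \phi_0 \circ f$, and $\phi_0(\mathcal{V}_f) = \mathcal{V}_f$ holds trivially. Alternatively, one may bypass the indexing issue altogether by feeding $\phi_1(\mathcal{C}_f) = \mathcal{C}_f$ (from Theorem~\ref{t:deckfixescrits}, or directly from Lemma~\ref{l:standardfacts}) into Lemma~\ref{l:Mobiuslemma}.

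I do not expect a genuine obstacle: all of the difficulty has been absorbed into the proof of Theorem~\ref{t:deckfixescrits}, and this proposition is the clean corollary that strips the unwanted hypothesis from Lemma~\ref{l:keylemma}. The one thing to be careful about is simply to fix the convention for $\Deck(f^0)$ so that the indexing in the conclusion is meaningful in the base case.
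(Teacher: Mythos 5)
Your proof is correct and takes essentially the same approach as the paper, whose entire proof is the observation that the hypothesis $\phi_k(\mathcal{C}_f) = \mathcal{C}_f$ in Lemma~\ref{l:keylemma} can be discharged by Theorem~\ref{t:deckfixescrits}. Your extra care with the base case $k=1$ via the convention $\Deck(f^0) = \{ \mathrm{id} \}$ is a reasonable tidying of a detail the paper leaves implicit (and adopts later, in Section~\ref{s:consequences}).
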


\begin{proof}
 The proof is the same as Lemma~\ref{l:keylemma}, with the hypothesis that $\phi_k(\mathcal{C}_f) = \mathcal{C}_f$ removed by Theorem~\ref{t:deckfixescrits}.
\end{proof}

We now use Proposition~\ref{p:keyprop} to prove a number of preliminary results which we will use to prove the main theorems. Observe that by Proposition~\ref{p:keyprop}, if for some $k > 1$ we have $\phi_k \in \Deck(f^k)$, then we can recursively define a sequence
\[
 (\phi_k , \phi_{k-1}, \dotsc, \phi_1, \phi_0 = \mathrm{id} )
\]
where for each $j$,  $\phi_j \in \Deck(f^j)$ and $f \circ \phi_j = \phi_{j-1} \circ f$. Each $\phi_j$ is uniquely determined by the initial choice of $\phi_{k}$, and we must have $f^{k-j} \circ \phi_{k} = \phi_j \circ f^{k-j}$. This gives the following commutative diagram.

\begin{center}
  \begin{tikzcd} 
        \hatC \arrow[r, "f"]  \arrow[d, "\phi_k" ]
    &   \hatC \arrow[r, "f" ] \arrow[d, "\phi_{k-1}" ]
    &   \dots \arrow[r, "f" ] 
    &   \hatC \arrow[r, "f" ] \arrow[d, "\phi_1" ]
    &   \hatC                 \arrow[d, "\mathrm{id}" ] \\
        \hatC \arrow[r, "f"] 
    &   \hatC \arrow[r, "f" ] 
    &   \dots \arrow[r, "f" ] 
    &   \hatC \arrow[r, "f" ] 
    &   \hatC                 
\end{tikzcd}
\end{center}

In particular, if $j=1$ we obtain the following result.

\begin{proposition}\label{p:Sarah1}
Let $f$ be a bicritical rational map. Let $k > 1$ and suppose $\phi_k \in \Deck(f^k)$. Then there exists a unique $\phi_1 \in \Deck(f)$ such that the following diagram commutes.
\begin{center}
  \begin{tikzcd} 
        \hatC \arrow[r, "f^{k-1}"] \arrow[d, "\phi_k" ']
    &   \hatC \arrow[r, "f" ] \arrow[d, "\phi_1"]
    &   \hatC \arrow[d, "\mathrm{id}" ] \\
        \hatC \arrow[r, "f^{k-1}" ]
    &   \hatC \arrow[r, "f" ]
    &   \hatC  
\end{tikzcd}
\end{center}
Furthermore
\begin{enumerate}
 \item $\phi_1$ is the identity if and only if $\phi_k \in \Deck(f^{k-1})$.
 \item $\phi_1(\mathcal{V}_f) = \mathcal{V}_f$.
\end{enumerate}
\end{proposition}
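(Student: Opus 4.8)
The plan is to obtain $\phi_1$ as the terminal step of the recursive construction already described preceding the statement, and then to verify the two enumerated properties by elementary diagram-chasing. Since $k>1$ and $\phi_k \in \Deck(f^k)$, repeated application of Proposition~\ref{p:keyprop} produces the sequence $(\phi_k, \phi_{k-1}, \dots, \phi_1)$ with $\phi_j \in \Deck(f^j)$ and $f \circ \phi_j = \phi_{j-1}\circ f$; composing these relations telescopes to $f^{k-1}\circ \phi_k = \phi_1 \circ f^{k-1}$, which is exactly the commutativity of the left square, while $\phi_1 \in \Deck(f)$ says $f \circ \phi_1 = f$, the commutativity of the right square. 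Uniqueness of $\phi_1$ is inherited from the uniqueness clause of Proposition~\ref{p:keyprop} at each stage; equivalently, any $\psi \in \Deck(f)$ making the left square commute satisfies $\psi \circ f^{k-1} = \phi_1 \circ f^{k-1}$, and surjectivity of $f^{k-1}$ forces $\psi = \phi_1$.

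Property (2) requires no new work: $\phi_1$ is precisely the Möbius transformation produced by Proposition~\ref{p:keyprop} from $\phi_2 \in \Deck(f^2)$ (taking the ``$k=2$'' instance of that proposition), so its ``Moreover'' clause already gives $\phi_1(\mathcal{V}_f) = \mathcal{V}_f$. This holds whether $k=2$, in which case $\phi_2 = \phi_k$ directly, or $k>2$, in which case $\phi_2$ is an intermediate term of the chain but still lies in $\Deck(f^2)$.

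For property (1) I would argue both implications from the left-square relation $f^{k-1}\circ \phi_k = \phi_1 \circ f^{k-1}$ together with surjectivity of $f^{k-1}$. If $\phi_k \in \Deck(f^{k-1})$, then $f^{k-1}\circ \phi_k = f^{k-1}$, so $\phi_1 \circ f^{k-1} = f^{k-1}$; since $f^{k-1}$ is onto, $\phi_1$ fixes every point of $\hatC$, whence $\phi_1 = \mathrm{id}$. Conversely, if $\phi_1 = \mathrm{id}$ then the same relation reads $f^{k-1}\circ \phi_k = f^{k-1}$, which is exactly the assertion $\phi_k \in \Deck(f^{k-1})$.

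Since the substantive content—existence, the intertwining relation, and the critical-value invariance—has been front-loaded into Proposition~\ref{p:keyprop} and the recursive construction preceding the statement, there is no serious obstacle here. The only points demanding care are keeping the order of composition straight, so that the telescoped relation genuinely matches the left square, and invoking surjectivity of $f^{k-1}$ (rather than of $f$) when cancelling on the right in property~(1).
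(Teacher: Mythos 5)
Your proposal is correct and follows essentially the same route as the paper: both obtain $\phi_1$ by iterating Proposition~\ref{p:keyprop} along the chain $(\phi_k,\phi_{k-1},\dotsc,\phi_1)$, telescope to get $f^{k-1}\circ\phi_k=\phi_1\circ f^{k-1}$, deduce property (2) from the ``Moreover'' clause of that proposition, and settle property (1) by the same two-way diagram chase. Your explicit appeal to surjectivity of $f^{k-1}$ for uniqueness and for the forward direction of (1) is just a slightly more spelled-out version of what the paper leaves implicit.
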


\begin{proof}
By Proposition~\ref{p:keyprop} and the discussion following Proposition~\ref{p:keyprop}, we know that there exists a unique $\phi_1 \in \Deck(f)$ such that $f^{k-1} \circ \phi_k = \phi_1 \circ f^{k-1}$. This proves the diagram commutes. Now suppose that $\phi_k \in \Deck(f^{k-1})$. Then since $f^{k-1} = f^{k-1} \circ \phi_k$, we see from the diagram that $\phi_1 = \mathrm{id}$. On the other hand, if $\phi_1 = \mathrm{id}$, then the diagram shows that $f^{k-1} \circ \phi_k = f^{k-1}$, and so $\phi_k \in \Deck(f^{k-1})$. The assertion that $\phi_1(\mathcal{V}_f) = \mathcal{V}_f$ again follows from Proposition~\ref{p:keyprop}.
\end{proof}

It should be noted that in general, an element $\phi \in \Deck(f)$ need not map $\mathcal{V}_f$ to itself. For example, if $f(z) = \frac{1}{z^2-1}$, then the unique non-identity element of $\Deck(f)$ is $\phi(z) = -z$, which fixes the critical points $0$ and $\infty$ of $f$. However, $\mathcal{V}_f = \{ 0 , -1 \}$, which is clearly not preserved by $\phi$. 

The following can be thought of as a partial converse to Proposition~\ref{p:keyprop}.

\begin{lemma}\label{l:keypropconverse}
Let $f$ be a bicritical rational map of degree $d$ and suppose $\mu$ is a M\"obius transformation such that $\mu(\mathcal{V}_f) = \mathcal{V}_f$. Then there exists a M\"obius transformation $\phi$ such that $f \circ \phi = \mu \circ f$ and $\phi(\mathcal{C}_f) = \mathcal{C}_f$. In particular, if $\mu \in \Deck(f^{k})$ for some $k \geq 1$ then $\phi \in \Deck(f^{k+1})$.
\end{lemma}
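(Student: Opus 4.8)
The plan is to establish the existence of $\phi$ by conjugating $f$ into the normal form $g(z)=z^d$ and then solving the required functional equation by hand; morally, this shows that the correspondence $\phi\mapsto\mu$ produced by Lemma~\ref{l:Mobiuslemma} is surjective onto the Möbius transformations preserving $\mathcal{V}_f$, and so it is a genuine converse. First I would write $f=\alpha\circ g\circ\beta$ with $\alpha,\beta\in\Mob$ and $g(z)=z^d$, so that $\mathcal{C}_f=\beta^{-1}(\{0,\infty\})$ and $\mathcal{V}_f=\alpha(\{0,\infty\})$. Setting $\mu'=\alpha^{-1}\circ\mu\circ\alpha$, the hypothesis $\mu(\mathcal{V}_f)=\mathcal{V}_f$ translates to $\mu'(\{0,\infty\})=\{0,\infty\}$, so $\mu'$ is either $z\mapsto cz$ or $z\mapsto c/z$ for some $c\neq 0$. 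Looking for $\phi$ of the shape $\phi=\beta^{-1}\circ\phi'\circ\beta$, a routine substitution shows that $f\circ\phi=\mu\circ f$ is equivalent to $g\circ\phi'=\mu'\circ g$, and that $\phi(\mathcal{C}_f)=\mathcal{C}_f$ is equivalent to $\phi'(\{0,\infty\})=\{0,\infty\}$. Thus everything reduces to the power-map case.

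The heart of the argument is then the explicit root extraction. When $\mu'(z)=cz$ I would take $\phi'(z)=c^{1/d}z$ for any fixed $d$-th root of $c$, and when $\mu'(z)=c/z$ I would take $\phi'(z)=c^{1/d}/z$; in each case one checks directly that $(\phi'(z))^d=\mu'(z^d)$, and each such $\phi'$ plainly preserves $\{0,\infty\}$. Transporting back by $\beta$ yields a Möbius transformation $\phi$ with $f\circ\phi=\mu\circ f$ and $\phi(\mathcal{C}_f)=\mathcal{C}_f$, proving the existence claim. It is worth remarking that, in contrast with the forward direction of Lemma~\ref{l:Mobiuslemma}, where $\mu$ is \emph{uniquely} determined by $\phi$, here $\phi$ is only determined up to precomposition by an element of $\Deck(f)$, corresponding precisely to the $d$ choices of $d$-th root.

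For the final ``in particular'' assertion, suppose $\mu\in\Deck(f^{k})$, so $f^{k}\circ\mu=f^{k}$. Using the intertwining identity $f\circ\phi=\mu\circ f$ just obtained, I compute
\[
 f^{k+1}\circ\phi=f^{k}\circ(f\circ\phi)=f^{k}\circ(\mu\circ f)=(f^{k}\circ\mu)\circ f=f^{k}\circ f=f^{k+1},
\]
so $\phi\in\Deck(f^{k+1})$, as desired. The argument is short and contains no serious obstacle; the only points requiring care are the bookkeeping under the conjugation $f=\alpha\circ g\circ\beta$ and verifying that the explicitly chosen lift $\phi'$ (hence $\phi$) indeed preserves the critical set rather than merely satisfying the functional equation—both of which are transparent from the normal form.
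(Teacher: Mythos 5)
Your proposal is correct and takes essentially the same route as the paper's proof: reduce to the normal form $g(z)=z^d$ via $f=\alpha\circ g\circ\beta$, solve $g\circ\phi'=\mu'\circ g$ explicitly by extracting a $d$-th root, conjugate back by $\beta$, and obtain the ``in particular'' claim by postcomposing the intertwining relation with $f^k$. One small point in your favor: your lift $\phi'(z)=c^{1/d}z^{\pm 1}$ is the correct formula, whereas the paper's printed $\phi(z)=a^{d}z^{\pm 1}$ in the power-map case is evidently a typo for $a^{1/d}z^{\pm 1}$.
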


\begin{proof}
The proof proceeds like that of Lemma~\ref{l:Mobiuslemma}, but note in this case there is no uniqueness. First, suppose $g(z) = z^d$. Then if $\mu(\mathcal{V}_g) = \mathcal{V}_g = \{0,\infty\}$, we have $\mu(z) = az^{\pm 1}$ for some $a \in \mathbb{C} - \{0\}$. Thus taking $\phi(z) = a^d z^{\pm 1}$, we get $g \circ \phi = \mu \circ g$ and $\phi(\mathcal{C}_g) = \mathcal{C}_g$ as required.

For the general case, we again note that if $f$ is a bicritical rational map of degree $d$, then there exist M\"obius transformations $\alpha$ and $\beta$ such that $f = \alpha \circ g \circ \beta$ for $g(z) = z^d$, so that $\alpha(\mathcal{V}_g) = \mathcal{V}_f$ and $\beta(\mathcal{C}_f) = \mathcal{C}_g$. Thus if $\mu$ is a M\"obius transformation and $\mu(\mathcal{V}_f) = \mathcal{V}_f$, then $\mu' = \alpha^{-1} \circ \mu \circ \alpha$ must satisfy $\mu'(\mathcal{V}_g) = \mathcal{V}_g$. Hence, by the previous paragraph, there exists $\phi'$ such that $g \circ \phi' = \mu' \circ g$ and $\phi'(\mathcal{C}_g) = \mathcal{C}_g$. Thus taking $\phi = \beta^{-1} \circ \phi' \circ \beta$ we get $f \circ \phi = \mu \circ f$ and $\phi(\mathcal{C}_f) = \mathcal{C}_f$, as required.

Finally, if $\mu \in \Deck(f^k)$ then since $f \circ \phi = \mu \circ f$, composing on the left by $f^k$ gives
\[
f^{k+1} \circ \phi = f^k \circ (f \circ \phi) = f^k \circ (\mu \circ f) = (f^k \circ \mu) \circ f   = f^{k} \circ f = f^{k+1}
\]
and so $\phi \in \Deck(f^{k+1})$.
\end{proof}

\begin{lemma}\label{l:eventuallyconstant}
Let $f$ be a bicritical rational map. Suppose for some $k$ that $\Deck(f^k) = \Deck(f^{k+1})$. Then $\Deck(f^{k+2}) = \Deck(f^{k+1}) = \Deck(f^k)$.
\end{lemma}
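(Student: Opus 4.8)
The plan is to reduce everything to a single inclusion. By Lemma~\ref{l:standardfacts}\,(4) we already have $\Deck(f^{k+1}) \subseteq \Deck(f^{k+2})$, so it suffices to prove the reverse inclusion $\Deck(f^{k+2}) \subseteq \Deck(f^{k+1})$. Once this is established, combining it with the given hypothesis $\Deck(f^k) = \Deck(f^{k+1})$ yields the chain of equalities $\Deck(f^{k+2}) = \Deck(f^{k+1}) = \Deck(f^k)$. Thus the whole problem comes down to taking an arbitrary $\phi \in \Deck(f^{k+2})$ and showing that $f^{k+1} \circ \phi = f^{k+1}$.

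The main idea is to \emph{descend} $\phi$ by one level. Applying Proposition~\ref{p:keyprop} to $\phi \in \Deck(f^{k+2})$ produces a (unique) $\psi \in \Deck(f^{k+1})$ satisfying $f \circ \phi = \psi \circ f$. At this point I invoke the hypothesis: since $\Deck(f^{k+1}) = \Deck(f^k)$, the transformation $\psi$ in fact lies in $\Deck(f^k)$, so $f^k \circ \psi = f^k$. This is the crux of the argument — having $\psi$ land not merely in $\Mob$ but in the \emph{smaller} group $\Deck(f^k)$ is exactly what the hypothesis buys us.

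The rest is a formal computation, peeling off one copy of $f$ from the front and using the intertwining relation:
\[
f^{k+1}\circ \phi = f^{k}\circ(f\circ \phi) = f^{k}\circ(\psi \circ f) = (f^{k}\circ \psi)\circ f = f^{k}\circ f = f^{k+1},
\]
so $\phi \in \Deck(f^{k+1})$, giving the desired inclusion.

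I expect the only genuine obstacle to be the production of the intermediate map $\psi$ together with the guarantee that it belongs to $\Deck(f^{k+1})$; this is precisely the content of Proposition~\ref{p:keyprop} (which in turn rests on Theorem~\ref{t:deckfixescrits}), and it is where the structural hypotheses on $f$ enter. Everything after that is bookkeeping. It is worth noting that the lemma is the base case of an obvious induction: iterating it shows that once the increasing chain $\Deck(f)\subseteq \Deck(f^2)\subseteq\cdots$ fails to grow for a single step, it is constant thereafter, so the sequence of deck groups of iterates is eventually stationary.
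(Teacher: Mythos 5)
Your proof is correct and takes essentially the same route as the paper's: the paper likewise applies Proposition~\ref{p:keyprop} to descend $\phi \in \Deck(f^{k+2})$ to a $\mu \in \Deck(f^{k+1}) = \Deck(f^k)$ and then postcomposes $f \circ \phi = \mu \circ f$ with $f^k$ to conclude $\phi \in \Deck(f^{k+1})$. One small remark: both your argument and the paper's implicitly require $f$ to be bicritical (since Proposition~\ref{p:keyprop} is stated only for bicritical maps), even though the lemma is phrased for general rational maps --- a mismatch already present in the paper itself.
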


\begin{proof}
Let $\phi \in \Deck(f^{k+2})$. By Proposition~\ref{p:keyprop}, there exists $\mu \in \Deck(f^{k+1})$ such that
\begin{equation}\label{e:eventuallyconstant}
 f \circ \phi = \mu \circ f.
\end{equation} 
Since $\Deck(f^{k+1}) = \Deck(f^k)$, we see that $\mu \in \Deck(f^k)$. But then postcomposing \eqref{e:eventuallyconstant} by $f^k$ yields
\[
 f^{k+1} \circ \phi = f^k \circ (\mu \circ f) = (f^k \circ \mu) \circ f = f^{k+1}
\]
and so $\phi \in \Deck(f^{k+1})$.
\end{proof}

\begin{remark}
The authors are unaware if there are counterexamples to Lemma~\ref{l:eventuallyconstant} in the case where $f$ is a general rational map. That is, if $f$ is a rational map and $k \geq 1$, is it true that $\Deck(f^k) = \Deck(f^{k+1})$ implies $\Deck(f^n) = \Deck(f^k)$ for all $n \geq k$?
\end{remark}

In the following, we will use the notation $\Deck^\ast(f^k)=\Deck(f^k) \setminus \Deck(f^{k-1})$, with the convention that $\Deck(f^0) = \Deck^\ast(f^0) = \{ \mathrm{id} \}$. Note that by using this convention, we have $\Deck^\ast(f) = \Deck(f) \setminus \{ \mathrm{id} \}$ is the set of non-identity elements of $\Deck(f)$.

\begin{lemma}\label{l:deckincreases}
 Let $f$ be a bicritical rational map and $k \geq 0$. Then $\Deck^\ast(f^{k+1}) \neq \varnothing$ if and only if there exists $\mu \in \Deck^\ast(f^k)$ such that $\mu(\mathcal{V}_f) = \mathcal{V}_f$.
\end{lemma}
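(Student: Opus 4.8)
The plan is to read both implications off the two transfer results already established: Proposition~\ref{p:keyprop}, which pushes an element of $\Deck(f^{k+1})$ down to one of $\Deck(f^k)$ preserving $\mathcal{V}_f$, and Lemma~\ref{l:keypropconverse}, which lifts a $\mathcal{V}_f$-preserving element of $\Deck(f^k)$ back up to $\Deck(f^{k+1})$. Both produce a pair $(\phi,\mu)$ related by $f \circ \phi = \mu \circ f$ with $\phi \in \Deck(f^{k+1})$ and $\mu \in \Deck(f^k)$; the only real work is to verify that the ``$\ast$'' (i.e.\ newness) condition transfers between $\phi$ and $\mu$. First I would dispose of the trivial base case $k=0$, where $\Deck^\ast(f^0) = \{\mathrm{id}\}$, the condition $\mu(\mathcal{V}_f) = \mathcal{V}_f$ holds for $\mu = \mathrm{id}$, and $\Deck^\ast(f^1) = \Deck(f) \setminus \{\mathrm{id}\} \neq \varnothing$ since $\Deck(f) \cong \mathbb{Z}_d$ with $d \geq 2$; so both sides hold. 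Thereafter I assume $k \geq 1$.

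For the forward direction, suppose $\phi \in \Deck^\ast(f^{k+1})$. Proposition~\ref{p:keyprop} furnishes a unique $\mu \in \Deck(f^k)$ with $f \circ \phi = \mu \circ f$ and $\mu(\mathcal{V}_f) = \mathcal{V}_f$, so it remains only to check $\mu \notin \Deck(f^{k-1})$. I would argue by contradiction: if $\mu \in \Deck(f^{k-1})$, then postcomposing $f \circ \phi = \mu \circ f$ with $f^{k-1}$ gives
\[
 f^k \circ \phi = f^{k-1} \circ \mu \circ f = f^{k-1} \circ f = f^k,
\]
so $\phi \in \Deck(f^k)$, contradicting $\phi \in \Deck^\ast(f^{k+1})$. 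Hence $\mu \in \Deck^\ast(f^k)$, as required.

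For the reverse direction, suppose $\mu \in \Deck^\ast(f^k)$ satisfies $\mu(\mathcal{V}_f) = \mathcal{V}_f$. Lemma~\ref{l:keypropconverse} produces $\phi \in \Deck(f^{k+1})$ with $f \circ \phi = \mu \circ f$, and I need $\phi \notin \Deck(f^k)$. Again arguing by contradiction, if $\phi \in \Deck(f^k)$ then
\[
 f^{k-1} \circ \mu \circ f = f^{k-1} \circ (f \circ \phi) = f^k \circ \phi = f^k = f^{k-1} \circ f.
\]
Here the one genuine step is to cancel the rightmost $f$: since $f$ is surjective, $f^{k-1} \circ \mu \circ f = f^{k-1} \circ f$ forces $f^{k-1} \circ \mu = f^{k-1}$, i.e.\ $\mu \in \Deck(f^{k-1})$, contradicting $\mu \in \Deck^\ast(f^k)$. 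Therefore $\phi \in \Deck^\ast(f^{k+1})$, which is thus nonempty.

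The argument is essentially bookkeeping around the single relation $f \circ \phi = \mu \circ f$, and I expect no serious obstacle; the only point requiring care is the right-cancellation of $f$ in the reverse direction, which is legitimate precisely because $f$ is surjective. The two contradiction arguments are mirror images, reflecting the duality between Proposition~\ref{p:keyprop} and Lemma~\ref{l:keypropconverse}.
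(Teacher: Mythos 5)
Your proof is correct and follows essentially the same route as the paper's: both directions rest on Proposition~\ref{p:keyprop} and Lemma~\ref{l:keypropconverse}, with mirror-image contradiction arguments to transfer the ``newness'' condition. The only cosmetic differences are that you treat the $k=0$ case explicitly and, in the reverse direction, cancel the rightmost $f$ directly by surjectivity where the paper instead invokes the uniqueness clause of Proposition~\ref{p:keyprop} --- which amounts to the same thing, since that uniqueness is itself a consequence of surjectivity.
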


\begin{proof}
 Suppose $\phi \in \Deck^\ast(f^{k+1})$. By Proposition~\ref{p:keyprop}, there exists $\mu \in \Deck(f^k)$ such that 
 \begin{equation}\label{eq:Symmetryidentity}
 \mu \circ f = f \circ \phi 
 \end{equation}
 and $\mu(\mathcal{V}_f) = \mathcal{V}_f$. If $\mu$ is not an element of $\Deck^\ast(f^k)$, then $\mu \in \Deck(f^{k-1})$. Thus $f^{k-1} \circ \mu = f^{k-1}$, and so composing $f^{k-1}$ on the left of \eqref{eq:Symmetryidentity} we get
 \[
  f^{k} = f^{k-1} \circ \mu \circ f = f^{k} \circ \phi
 \]
and so $\phi \in \Deck(f^k)$. But this contradicts $\phi \in \Deck^\ast(f^{k+1})$, and so we conclude that $\mu \in \Deck^\ast(f^k)$.
 
 Conversely, suppose that there exists $\mu \in \Deck^\ast(f^k)$ such that $\mu(\mathcal{V}_f) = \mathcal{V}_f$. It follows from Lemma~\ref{l:keypropconverse} that there exists $\phi \in \Deck(f^{k+1})$ such that $\mu \circ f = f \circ \phi$. Suppose that $\phi \in \Deck(f^k)$. Then Proposition~\ref{p:keyprop} asserts that there is a unique $\mu' \in \Deck(f^{k-1})$ such that $\mu' \circ f = f \circ \phi$. But then $\mu = \mu'$, and so this contradicts $\mu \in \Deck^\ast(f^k)$. Hence $\phi \in \Deck^\ast(f^{k+1})$ and so $\Deck^\ast(f^{k+1}) \neq \varnothing$.  
\end{proof}

Before continuing, we provide an example which shows that the converse to Lemma~\ref{l:dihedralimpliescritcoal} does not generalize to higher degrees.

\begin{example}\label{ex:critcoalconversefalse}
 Let $f(z) = \frac{z^4-1}{z^4+i}$. Then $\mathcal{V}_f = \{1, i\}$ and $f(1) = f(i) = 0$; thus $f$ is critically coalescing. One can check by direct computation that $\Deck(f^2) = \Deck(f) \cong \Z_4$ and then appeal to Lemma~\ref{l:eventuallyconstant}, but here is an argument that also makes use of Lemma~\ref{l:deckincreases}. It is easy to see that $\Deck(f)$ is generated by the order $4$ rotation $\rho(z) = i z$. But then for all $n \in \{1,2,3\}$, we have $\rho^n(\mathcal{V}_f) \neq \mathcal{V}_f$, and so there does not exist a non-identity element of $\Deck(f)$ which fixes $\mathcal{V}_f$ as a set. From Lemma~\ref{l:deckincreases}, we see that $\Deck(f^2) = \Deck(f) \cong \Z_4$. Then by  Lemma~\ref{l:eventuallyconstant}, we see that $\Deck(f^k) \cong \Z_4$ for all $k \geq 1$.
\end{example}

\begin{lemma}\label{l:decksizes}
 Let $f$ be a bicritical rational map of degree $d \geq 2$. Then
 \begin{equation}\label{eq:decksizes}
  \frac{|\Deck(f^k)|}{|\Deck(f^{k-1})|} \leq d.
 \end{equation}
Furthermore, if $f$ is not a power map, then the quotient is at most $2$.
\end{lemma}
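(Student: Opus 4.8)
The plan is to realize the quotient $\Deck(f^k)/\Deck(f^{k-1})$ as (a group isomorphic to) a subgroup of $\Deck(f) \cong \Z_d$, and then to bound the order of that subgroup. Throughout I assume $k \geq 2$; when $k=1$ the ratio equals $|\Deck(f)| = d$, so the first inequality \eqref{eq:decksizes} is immediate, and the ``furthermore'' clause is to be read for $k \geq 2$ (since by Lemma~\ref{l:standardfacts}(5) the group $\Deck(f) \cong \Z_d$ already has order $d$, the clause cannot hold at $k=1$ once $d > 2$).

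First I would use Proposition~\ref{p:Sarah1} to define a map $\Phi \colon \Deck(f^k) \to \Deck(f)$ sending $\phi_k$ to the unique $\phi_1 \in \Deck(f)$ with $f^{k-1} \circ \phi_k = \phi_1 \circ f^{k-1}$. The uniqueness clause of Proposition~\ref{p:Sarah1} makes $\Phi$ a group homomorphism: if $\phi_k, \psi_k \in \Deck(f^k)$ have images $\phi_1, \psi_1$, then $f^{k-1} \circ (\phi_k \circ \psi_k) = \phi_1 \circ \psi_1 \circ f^{k-1}$, and uniqueness forces $\Phi(\phi_k \circ \psi_k) = \phi_1 \circ \psi_1$. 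By Proposition~\ref{p:Sarah1}(1), $\Phi(\phi_k) = \mathrm{id}$ exactly when $\phi_k \in \Deck(f^{k-1})$, so $\ker \Phi = \Deck(f^{k-1})$ (a subgroup of $\Deck(f^k)$ by Lemma~\ref{l:standardfacts}). The first isomorphism theorem then yields
\[
\Deck(f^k)/\Deck(f^{k-1}) \cong \mathrm{im}(\Phi) \leq \Deck(f),
\]
so the ratio in \eqref{eq:decksizes} equals $|\mathrm{im}(\Phi)|$, which is at most $|\Deck(f)| = d$ by Lemma~\ref{l:standardfacts}(5). This proves the first inequality.

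For the ``furthermore'' clause I would bound $\mathrm{im}(\Phi)$ more tightly. By Proposition~\ref{p:Sarah1}(2) every element of $\mathrm{im}(\Phi)$ preserves $\mathcal{V}_f$, so $\mathrm{im}(\Phi)$ is contained in the stabilizer $H := \{ \phi \in \Deck(f) : \phi(\mathcal{V}_f) = \mathcal{V}_f \}$. It therefore suffices to show $|H| \leq 2$ when $f$ is not a power map. I would consider the permutation action of $H$ on the two-point set $\mathcal{V}_f$, i.e.\ the homomorphism $H \to \mathrm{Sym}(\mathcal{V}_f) \cong \Z_2$, and argue its kernel is trivial: any nonidentity element of this kernel lies in $\Deck(f)$ and fixes both points of $\mathcal{V}_f$, but by Lemma~\ref{l:standardfacts}(5) a nonidentity element of $\Deck(f)$ fixes exactly the two points of $\mathcal{C}_f$; this would force $\mathcal{V}_f \subseteq \mathcal{C}_f$, hence $\mathcal{V}_f = \mathcal{C}_f$ and $f$ a power map, contrary to hypothesis. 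Thus $H$ embeds into $\Z_2$, giving $|H| \leq 2$ and a ratio of at most $2$.

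The homomorphism bookkeeping is routine. The one step demanding genuine care is the triviality of the kernel of $H \to \mathrm{Sym}(\mathcal{V}_f)$: this is precisely where the non-power-map hypothesis is used, ruling out a nonidentity deck element that fixes both critical values. I would also double-check the boundary case $k=1$ against the convention $\Deck(f^0) = \{\mathrm{id}\}$ so that the stated inequality is literally correct as written.
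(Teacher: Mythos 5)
Your proof is correct and follows essentially the same route as the paper: both define the homomorphism $\Deck(f^k) \to \Deck(f)$ via Proposition~\ref{p:Sarah1}, identify its kernel as $\Deck(f^{k-1})$, and bound the image by $d$ in general and by $2$ (via the stabilizer of $\mathcal{V}_f$ in $\Deck(f)$) when $f$ is not a power map. Your write-up in fact supplies slightly more detail than the paper at two points—the explicit permutation-action argument showing the $\mathcal{V}_f$-stabilizer has order at most $2$, and the $k=1$ boundary convention—both of which the paper leaves implicit.
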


\begin{proof}
Suppose $\phi \in \Deck(f^k)$. Then by Proposition~\ref{p:Sarah1}, there exists a unique $\mu \in \Deck(f)$ such that $f^{k-1} \circ \phi = \mu \circ f^{k-1}$ and  $\mu \neq \mathrm{id}$ if and only if $\phi \in \Deck^\ast(f^k)$. Furthermore, $\mu(\mathcal{V}_f) = \mathcal{V}_f$.
 
 Define $h \colon \Deck(f^k) \to \Deck(f)$ by $h(\phi) = \mu$, where $\mu$ is defined as the map from the above paragraph. We claim that $h$ is a homomorphism. To see this, note that if $f^{k-1} \circ \phi_1 = \mu_1 \circ f^{k-1}$ and $f^{k-1} \circ \phi_2 = \mu_2 \circ f^{k-1}$, then
 \begin{align*}
  f^{k-1} \circ \phi_1 \circ \phi_2  &= \mu_1 \circ f^{k-1} \circ \phi_2 \\
  &= \mu_1 \circ \mu_2 \circ f^{k-1} \\
  &= \mu_1 \circ \mu_2 \circ f^{k-1}
 \end{align*}
It follows that $h(\phi_1 \circ \phi_2) = h(\phi_1) \circ h(\phi_2)$. By Proposition~\ref{p:Sarah1}, we have $\ker h = \Deck(f^{k-1})$. Since each coset of $\ker h$ in $\Deck(f^k)$ has cardinality equal to $|\Deck(f^{k-1})|$, and since there are at most $|\Deck(f)| = d$ cosets, we conclude from the First Isomorphism Theorem that \eqref{eq:decksizes} holds.

To prove the final claim, recall that if $f$ is bicritical then all non-identity elements $\mu \in \Deck(f)$ satisfy $\mathrm{Fix}(\mu) = \mathcal{C}_f$.  Furthermore, if $\mu(\mathcal{V}_f) = \mathcal{V}_f$, then $\mu$ either fixes the elements of $\mathcal{V}_f$ pointwise, or $\mu$ is an involution which transposes the elements of $\mathcal{V}_f$. Since an involution is completely defined by its fixed points, we see that there is at most one $\mu \in \Deck(f)$ which transposes the elements of $\mathcal{V}_f$. Denote this element by $\nu$.

 Now suppose $f$ is bicritical but not a power map, so that $| \mathcal{C}_f \cup \mathcal{V}_f | \geq 3$. If $\mu \in \Deck(f)$ fixes the elements of $\mathcal{V}_f$ pointwise, then $| \mathrm{Fix}(\phi) | \geq 3$, so that $\phi$ is the identity. It follows that $\ran h \subseteq \{ \mathrm{id}, \nu \}$, and so again by the First Isomorphism Theorem we must have that the quotient \eqref{eq:decksizes} is at most $2$.
\end{proof}

To end this section, we give a result of independent interest. A general form of the following result was proved by Pakovich in \cite{Sym}, making use of algebraic curves. Here we give a dynamical proof, using properties of deck groups.

 \begin{proposition}
 Let $f$ be a bicritical rational map and $\phi \in \Deck(f^k)$. Then $f \circ \phi$ is conjugate to $f$.
 \end{proposition}
 
 \begin{proof}
 Let $\phi_k = \phi$. By Proposition~\ref{p:keyprop}, there exists $\phi_{k-1} \in \Deck(f^{k-1})$ such that $f \circ \phi_k = \phi_{k-1} \circ f$. Precomposing by $\phi_{k-1}$ gives
 \begin{equation}\label{e:conjugacyprop1}
  f \circ \phi_k \circ \phi_{k-1} = \phi_{k-1} \circ f \circ \phi_{k-1}.
 \end{equation}
 Again by Proposition~\ref{p:keyprop}, there exists $\phi_{k-2} \in \Deck(f^{k-2})$ such that $f \circ \phi_{k-1} = \phi_{k-2} \circ f$, and so \eqref{e:conjugacyprop1} becomes
 \[
 f \circ \phi_{k} \circ \phi_{k-1} = \phi_{k-1} \circ f \circ \phi_{k-1} = \phi_{k-1} \circ \phi_{k-2} \circ f.
 \]
 We can repeat the above process to recursively obtain $\phi_{j} \in \Deck(f^{j})$, so that
 \begin{equation}\label{e:conjugacyprop2}
  f \circ \phi_{k} \circ \phi_{k-1} \cdots \circ \phi_{1} = \phi_{k-1} \circ \cdots \phi_1 \circ \phi_0 \circ f.
 \end{equation}
 Furthermore, we must have $\phi_0 = \mathrm{id}$, so denoting $\Phi = \phi_{k-1} \circ \cdots \phi_{1}$, we see that \eqref{e:conjugacyprop2} becomes
 \[
  (f \circ \phi) \circ \Phi = \Phi \circ f.
 \]
 Since $\Phi$ is a M\"obius transformation, the result follows.
 \end{proof}

\section{M\"obius transformations preserving the sets of critical points and critical values values of a bicritical rational map}\label{s:specialmobius}

As can be ascertained from Proposition~\ref{p:keyprop} and Lemma~\ref{l:deckincreases}, the M\"obius transformations $\mu$ such that $\mu(\mathcal{C}_f) = \mathcal{C}_f$ and $\mu(\mathcal{V}_f) = \mathcal{V}_f$ are of particular importance when it comes to analyzing the groups $\Deck(f^k)$. In fact, when $f$ is not a power map these two conditions on $\mu$ are very restrictive.

\begin{lemma}\label{l:CfcapVfequals3}
 Let $f$ be a bicritical rational map of degree $d$ such that $|\mathcal{C}_f \cup \mathcal{V}_f| = 3$. Then the only M\"obius transformation $\mu$ satisfying $\mu(\mathcal{C}_f) = \mathcal{C}_f$ and $\mu(\mathcal{V}_f) = \mathcal{V}_f$ is the identity. Furthermore, $\Deck(f^k) \cong \Z_d$ for all $k \geq 1$
\end{lemma}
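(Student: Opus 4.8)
The plan is to prove the two assertions in turn, with the first doing essentially all of the geometric work and the second following formally from the machinery already developed in this section.

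For the first assertion I would begin by unpacking the hypothesis. Since $|\mathcal{C}_f| = |\mathcal{V}_f| = 2$, the equality $|\mathcal{C}_f \cup \mathcal{V}_f| = 3$ forces $|\mathcal{C}_f \cap \mathcal{V}_f| = 1$; write $p$ for the unique common point, $c$ for the remaining critical point, and $v$ for the remaining critical value, so that $p, c, v$ are three distinct points. Now let $\mu$ be a M\"obius transformation with $\mu(\mathcal{C}_f) = \mathcal{C}_f$ and $\mu(\mathcal{V}_f) = \mathcal{V}_f$. Since $p \in \mathcal{C}_f$ we have $\mu(p) \in \mathcal{C}_f$, and since $p \in \mathcal{V}_f$ we have $\mu(p) \in \mathcal{V}_f$; hence $\mu(p) \in \mathcal{C}_f \cap \mathcal{V}_f = \{p\}$, so $\mu$ fixes $p$. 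Because $\mu$ permutes $\mathcal{C}_f = \{p, c\}$ and fixes $p$, it must also fix $c$, and likewise it fixes $v$. A M\"obius transformation fixing three distinct points is the identity, so $\mu = \mathrm{id}$.

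For the second assertion I would leverage the first together with Lemmas~\ref{l:deckincreases} and \ref{l:eventuallyconstant}. By Lemma~\ref{l:standardfacts}, $\Deck(f) \cong \Z_d$, and every non-identity element of $\Deck(f)$ fixes $\mathcal{C}_f$ pointwise, and in particular preserves $\mathcal{C}_f$ as a set. If some $\mu \in \Deck^\ast(f) = \Deck(f) \setminus \{\mathrm{id}\}$ were to satisfy $\mu(\mathcal{V}_f) = \mathcal{V}_f$, then $\mu$ would preserve both $\mathcal{C}_f$ and $\mathcal{V}_f$, forcing $\mu = \mathrm{id}$ by the first assertion, a contradiction. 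Hence no element of $\Deck^\ast(f)$ preserves $\mathcal{V}_f$, and Lemma~\ref{l:deckincreases} (applied with $k = 1$) gives $\Deck^\ast(f^2) = \varnothing$, i.e. $\Deck(f^2) = \Deck(f)$. Feeding this into Lemma~\ref{l:eventuallyconstant} and iterating shows $\Deck(f^k) = \Deck(f)$ for every $k \geq 1$, whence $\Deck(f^k) \cong \Z_d$.

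I expect no serious obstacle here: the entire content is the short fixed-point argument of the second paragraph. The one point requiring care is to verify that elements of $\Deck(f)$ automatically preserve $\mathcal{C}_f$, so that the first assertion is applicable to them; this is immediate from Lemma~\ref{l:standardfacts}. It is worth observing that the hypothesis $|\mathcal{C}_f \cup \mathcal{V}_f| = 3$ excludes power maps (for which $\mathcal{C}_f = \mathcal{V}_f$) and critically coalescing maps (for which $\mathcal{C}_f \cap \mathcal{V}_f = \varnothing$), which is consistent with the fact that those are precisely the maps whose deck groups can grow beyond $\Z_d$.
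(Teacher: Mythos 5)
Your proposal is correct and follows essentially the same route as the paper: the unique point of $\mathcal{C}_f \cap \mathcal{V}_f$ must be fixed by $\mu$, forcing $\mu$ to fix all three points of $\mathcal{C}_f \cup \mathcal{V}_f$ and hence be the identity, after which Lemmas~\ref{l:deckincreases} and \ref{l:eventuallyconstant} give $\Deck(f^k) = \Deck(f) \cong \Z_d$ for all $k$. The only difference is that you spell out the deduction of the second assertion (which the paper leaves as a citation of those two lemmas), and you do so correctly, including the needed observation from Lemma~\ref{l:standardfacts} that non-identity elements of $\Deck(f)$ fix $\mathcal{C}_f$.
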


\begin{proof}
 Since $|\mathcal{C}_f \cup \mathcal{V}_f| = 3$, there exists a unique $w \in \mathcal{C}_f \cap \mathcal{V}_f$. But then any M\"obius transformation $\mu$ such that $\mu(\mathcal{C}_f) = \mathcal{C}_f$ and $\mu(\mathcal{V}_f) = \mathcal{V}_f$ must fix $w$. Therefore, $\mu$ would have to act as the identity on the three element set $\mathcal{C}_f \cup \mathcal{V}_f$, and so $\mu = \mathrm{id}$. Since $\Deck(f) \cong \Z_d$, the final claim then follows from Lemmas~\ref{l:eventuallyconstant} and \ref{l:deckincreases}.
\end{proof}

We now consider the case where $f$ is bicritical and $\mathcal{C}_f \cap \mathcal{V}_f = \varnothing$. This is equivalent to $|\mathcal{C}_f \cup \mathcal{V}_f| = 4$. First we consider the set of M\"obius transformations $\mu$ such that $\mu(\mathcal{C}_f) = \mathcal{C}_f$ and $\mu(\mathcal{V}_f) = \mathcal{V}_f$ in this case.

\begin{lemma}\label{l:fourMobius}
Let $f$ be a bicritical rational map such that $|\mathcal{C}_f \cup \mathcal{V}_f| = 4 $. Then there exist at most four M\"obius transformations $\mu$ such that $\mu(\mathcal{C}_f) = \mathcal{C}_f$ and $\mu(\mathcal{V}_f) = \mathcal{V}_f$, which are the following.
\begin{enumerate}
\item $\mu$ fixes the elements of $\mathcal{C}_f$ and $\mathcal{V}_f$ pointwise, so that $\mu = \mathrm{id}$.
\item $\mu_1$ such that $\mathrm{Fix}(\mu_1) = \mathcal{C}_f$ and $\mu_1$ swaps the elements of $\mathcal{V}_f$.
\item $\mu_2$ such that $\mathrm{Fix}(\mu_2) = \mathcal{V}_f$ and $\mu_2$ swaps the elements of $\mathcal{C}_f$.
\item $\mu_3$ such that $\mu_3$ swaps the elements of $\mathcal{C}_f$ and swaps the elements of $\mathcal{V}_f$.
\end{enumerate}
 Furthermore, each $\mu_i$, $i=1,2,3$ is an an involution. Furthermore, if all the above maps exist for the map $f$, they form a group which is isomorphic to $V_4$.
\end{lemma}

\begin{proof}
 Since a M\"obius transformation is uniquely characterized by its action on three points, we see there are the following possibilities for $\mu$. 

Furthermore, since each of the maps $\mu_1$, $\mu_2$ and $\mu_3$ have a period $2$ orbit, each one must be an involution. To complete the proof, we need to show that the four M\"obius transformations listed above form a group. But by Lemma~\ref{l:Beardon}, $\mu_1$ and $\mu_2$ commute. Furthermore, $\mu_1 \circ \mu_2$ is an involution which swaps the elements of $\mathcal{C}_f$ and swaps the elements of $\mathcal{V}_f$, so that $\mu_1 \circ \mu_2 = \mu_3$. Thus $\langle \mu_1, \mu_2 \rangle = \{ \mathrm{id}, \mu_1, \mu_2, \mu_3 \} \cong V_4$.
\end{proof}

We will continue to use the notation $\mu_i$, $i=1,2,3$ to denote the transformations obtained from the above lemma. Since any element of $\Deck(f)$ fixes $\mathcal{C}_f$ pointwise, we have $\mu_2, \mu_3 \notin \Deck(f)$.

\begin{lemma}\label{l:notpowerDeckf2}
 Let $f$ be a bicritical rational map of degree $d$, and suppose $f$ is not a power map. If $\Deck^\ast(f^2) \neq \varnothing$ then $\Deck(f^2) \cong D_{2d}$.
\end{lemma}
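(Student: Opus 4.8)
The plan is to determine the order of $\Deck(f^2)$ exactly and then use the cyclic-or-dihedral dichotomy of Theorem~\ref{t:cyclicOrDihedral}, so that the whole problem reduces to excluding the possibility $\Deck(f^2)\cong\Z_{2d}$. First I would compute $|\Deck(f^2)|$. Since $f$ is not a power map, Lemma~\ref{l:decksizes} gives $|\Deck(f^2)|/|\Deck(f)|\le 2$; the hypothesis $\Deck^\ast(f^2)\neq\varnothing$ means $\Deck(f)\subsetneq\Deck(f^2)$, so this quotient is exactly $2$, and as $|\Deck(f)|=d$ we obtain $|\Deck(f^2)|=2d$. By Theorem~\ref{t:cyclicOrDihedral}, $\Deck(f^2)$ is cyclic or dihedral; a dihedral group of order $2d$ is $D_{2d}$, so it remains only to rule out $\Deck(f^2)\cong\Z_{2d}$.

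Before the main step I would record that $\mathcal{C}_f\cap\mathcal{V}_f=\varnothing$. Indeed, if $|\mathcal{C}_f\cup\mathcal{V}_f|=3$ then Lemma~\ref{l:CfcapVfequals3} would force $\Deck(f^k)\cong\Z_d$ for all $k$, contradicting $\Deck^\ast(f^2)\neq\varnothing$; and $|\mathcal{C}_f\cup\mathcal{V}_f|=2$ is excluded because $f$ is not a power map. Hence the four points of $\mathcal{C}_f\cup\mathcal{V}_f$ are distinct, and in particular no critical value is fixed by a nonidentity element of $\Deck(f)$, since every such element has fixed-point set exactly $\mathcal{C}_f$ by Lemma~\ref{l:standardfacts}.

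The crux is a fixed-point comparison. Writing $\Deck(f)=\langle\tau\rangle$, I would pick any $\phi\in\Deck^\ast(f^2)$ and attach to it, via Proposition~\ref{p:keyprop}, the unique $\phi_1\in\Deck(f)$ with $f\circ\phi=\phi_1\circ f$; because $\phi\notin\Deck(f)$, Proposition~\ref{p:Sarah1}(1) guarantees $\phi_1\neq\mathrm{id}$. Evaluating $f\circ\phi=\phi_1\circ f$ at a critical point $c_1$ gives $f(\phi(c_1))=\phi_1(f(c_1))$; since $\phi_1\neq\mathrm{id}$ has $\mathrm{Fix}(\phi_1)=\mathcal{C}_f$ disjoint from $\mathcal{V}_f$, it does not fix the critical value $f(c_1)\in\mathcal{V}_f$, so $f(\phi(c_1))\neq f(c_1)$ and therefore $\phi(c_1)\neq c_1$. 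Thus $\mathrm{Fix}(\phi)\neq\mathcal{C}_f=\mathrm{Fix}(\tau)$, i.e.\ $\phi$ and $\tau$ are nonidentity elements of $\Deck(f^2)$ with different fixed-point pairs. In a cyclic group of Möbius transformations all nonidentity elements share a common fixed-point pair, so $\Deck(f^2)$ cannot be cyclic. Combined with Theorem~\ref{t:cyclicOrDihedral} and $|\Deck(f^2)|=2d$, this yields $\Deck(f^2)\cong D_{2d}$.

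I expect no serious obstacle here; the argument is short once the earlier machinery is assembled. The only points requiring care are the two facts feeding the final contradiction: that the $\phi_1$ associated to $\phi$ is genuinely nontrivial (this is exactly Proposition~\ref{p:Sarah1}(1)), and that $\mathcal{C}_f\cap\mathcal{V}_f=\varnothing$ (so that $\phi_1$ cannot fix a critical value). With those in place the evaluation at $c_1$ delivers the contradiction immediately.
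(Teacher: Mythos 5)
Your proof is correct, but the step where you exclude the cyclic possibility is genuinely different from the paper's. Both arguments open identically: Lemma~\ref{l:decksizes} together with $\Deck^\ast(f^2)\neq\varnothing$ gives $|\Deck(f^2)|=2|\Deck(f)|=2d$, and Theorem~\ref{t:cyclicOrDihedral} reduces the lemma to ruling out $\Deck(f^2)\cong\Z_{2d}$. The paper does this analytically: an element of order $2d$ would, by Lemma~\ref{l:localdegree}, force a point of local degree at least $2d$ for $f^2$, hence a critical point mapping onto a critical point, so $\mathcal{C}_f\cap\mathcal{V}_f\neq\varnothing$; Lemma~\ref{l:CfcapVfequals3} then gives $\Deck(f^2)\cong\Z_d$, a contradiction. (The paper also first records, via Lemma~\ref{l:deckincreases}, that $f$ is critically coalescing, though this is not essential to its contradiction.) You use Lemma~\ref{l:CfcapVfequals3} in the opposite direction---to establish up front that $\mathcal{C}_f\cap\mathcal{V}_f=\varnothing$---and then argue purely with fixed points: the semiconjugacy $f\circ\phi=\phi_1\circ f$ from Proposition~\ref{p:Sarah1} has $\phi_1\neq\mathrm{id}$, and since $\mathrm{Fix}(\phi_1)=\mathcal{C}_f$ is disjoint from $\mathcal{V}_f$, evaluating at $c_1$ shows $\phi(c_1)\neq c_1$; thus $\phi$ and the generator $\tau$ of $\Deck(f)$ are non-identity elements of $\Deck(f^2)$ with distinct fixed-point pairs, which cannot happen in a finite cyclic group of M\"obius transformations---a fact the paper itself invokes repeatedly (e.g.\ in Lemma~\ref{l:higherdegreefixescrits}) and which follows from Lemma~\ref{l:Beardon}. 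Your route avoids the Riemann--Hurwitz-flavored Lemma~\ref{l:localdegree} entirely and stays within the algebraic machinery of Section~\ref{s:consequences}, at the cost of needing the preliminary reduction to $|\mathcal{C}_f\cup\mathcal{V}_f|=4$; the paper's route is shorter on the page and yields the critical-coalescence of $f$ as a byproduct.
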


\begin{proof}
By Lemma~\ref{l:decksizes}, since $f$ is not a power map we have $|\Deck(f^2)| = 2|\Deck(f)| = 2d$. Since $\Deck(f) \cong \Z_d$, it follows from Theorem~\ref{t:cyclicOrDihedral} that $\Deck(f^2) \cong \Z_{2d}$ or $\Deck(f^2) \cong D_{2d}$. Furthermore, by assumption $\Deck(f^2) \neq \Deck(f)$, and so we know from Lemma~\ref{l:deckincreases} that there exists a non-identity $\mu \in \Deck(f)$ such that $\mu(\mathcal{V}_f) = \mathcal{V}_f$. By Lemma~\ref{l:CfcapVfequals3} and the fact that $f$ is not a power map, we must have $|\mathcal{C}_f \cup \mathcal{V}_f| = 4$, and so $\mu$ must be the map $\mu_1$ from Lemma~\ref{l:fourMobius}. Since $\mu_1$ swaps the elements of $\mathcal{V}_f = \{ v_1, v_2\}$ we see that $f(v_1) = f(v_2)$, and so $f$ is critically coalescing.

Now suppose that $\Deck(f^2) \cong \Z_{2d}$. By Lemma~\ref{l:localdegree}, there exists $c \in \hatC$ such that $\deg_{f^2}(c) \geq 2d$. But as $f$ is bicritical, we must have $\deg_{f^2}(c) \in \{1, d, d^2 \}$. Hence $\deg_{f^2}(c) = d^2$ and so $c \in \mathcal{C}_f \cap \mathcal{V}_f$. Therefore by Lemma~\ref{l:CfcapVfequals3}, we have $\Deck(f^2) \cong \Z_d$, which is a contradiction. Thus $\Deck(f^2) \cong D_{2d}$.
\end{proof}

The next result shows that if $f$ is bicritical but not a power map, then the group $\Deck_\infty(f) = \bigcup_{k=1}^\infty \Deck(f^k)$ studied by Pakovich is \cite{Sym} is equal to $\Deck(f^3)$.

\begin{proposition}\label{p:Deckf3isenough}
 Let $f$ be a bicritical rational map which is not a power map. Then $\Deck(f^k) = \Deck(f^3)$ for all $k \geq 3$.
\end{proposition}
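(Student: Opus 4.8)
The plan is to reduce the claim to the single equality $\Deck(f^4) = \Deck(f^3)$ and then to obtain this equality by a counting argument on the \emph{subgroups} of a fixed group of order at most four. For the reduction, note that Lemma~\ref{l:eventuallyconstant} shows that once two consecutive iterates have equal deck groups the sequence $(\Deck(f^k))_k$ is constant from that point on; thus it suffices to prove $\Deck^\ast(f^4) = \varnothing$, i.e.\ $\Deck(f^4) = \Deck(f^3)$, and then to iterate Lemma~\ref{l:eventuallyconstant} starting at $k = 3$ to conclude $\Deck(f^k) = \Deck(f^3)$ for every $k \geq 3$.

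First I would introduce the group $G$ of all M\"obius transformations $\mu$ with $\mu(\mathcal{C}_f) = \mathcal{C}_f$ and $\mu(\mathcal{V}_f) = \mathcal{V}_f$. Since $f$ is not a power map, $\mathcal{C}_f \neq \mathcal{V}_f$ and hence $|\mathcal{C}_f \cup \mathcal{V}_f| \in \{3,4\}$; Lemma~\ref{l:CfcapVfequals3} (in the size-$3$ case) and Lemma~\ref{l:fourMobius} (in the size-$4$ case) show that $G$ is indeed a group and that $|G| \leq 4$, being trivial or isomorphic to a subgroup of $V_4$. By Theorem~\ref{t:deckfixescrits} every element of $\Deck(f^k)$ already preserves $\mathcal{C}_f$, so the subgroup $H_k := \Deck(f^k) \cap G$ is precisely the set of elements of $\Deck(f^k)$ that preserve $\mathcal{V}_f$. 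Because $\Deck(f^{k-1}) \subseteq \Deck(f^k)$ (Lemma~\ref{l:standardfacts}), the groups $H_0 \subseteq H_1 \subseteq H_2 \subseteq \cdots$ form an increasing chain of subgroups of $G$, with $H_0 = \{\mathrm{id}\}$.

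The crux is to translate Lemma~\ref{l:deckincreases} into this language: that lemma says $\Deck^\ast(f^{k+1}) \neq \varnothing$ if and only if some element of $\Deck^\ast(f^k) = \Deck(f^k) \setminus \Deck(f^{k-1})$ preserves $\mathcal{V}_f$, which is exactly the condition $H_k \supsetneq H_{k-1}$. Now suppose, for contradiction, that $\Deck^\ast(f^4) \neq \varnothing$. Applying Lemma~\ref{l:deckincreases} three times (each witnessing element lying in $\Deck^\ast(f^{3})$, then $\Deck^\ast(f^{2})$, then $\Deck^\ast(f)$ in turn) forces the three strict inclusions $H_0 \subsetneq H_1 \subsetneq H_2 \subsetneq H_3$. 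But a chain of four distinct subgroups cannot sit inside a group of order at most $4$ (the longest subgroup chain in any group of order at most $4$ has at most three terms). This contradiction yields $\Deck^\ast(f^4) = \varnothing$ and completes the proof.

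I expect the only real subtlety to lie in the bookkeeping of the last paragraph: one must check that the hypothesis $\Deck^\ast(f^4)\neq\varnothing$ genuinely propagates backwards to give all three strict inclusions simultaneously (rather than, say, a single jump), and that $G$ is bounded uniformly across both the $|\mathcal{C}_f\cup\mathcal{V}_f|=3$ and $=4$ cases. Everything else is a direct application of the already-established lemmas; in particular, no use of Lemma~\ref{l:decksizes} is needed, since the order-$\leq 4$ bound on $G$ already caps how many times the chain $(H_k)$ can strictly grow.
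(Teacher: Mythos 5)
Your proof is correct, and it shares its skeleton with the paper's: both reduce the problem to showing $\Deck(f^4)=\Deck(f^3)$ and then invoke Lemma~\ref{l:eventuallyconstant}, both run on Lemma~\ref{l:deckincreases} together with Theorem~\ref{t:deckfixescrits}, and both exploit the fact that the group $\Gamma$ of M\"obius maps preserving $\mathcal{C}_f$ and $\mathcal{V}_f$ has order at most $4$ (Lemmas~\ref{l:CfcapVfequals3} and~\ref{l:fourMobius}). Where you genuinely diverge is in the finishing step. The paper tracks specific elements: the first jump $\Deck(f)\subsetneq\Deck(f^2)$ forces $\mu_1\in\Deck^\ast(f)$, the second jump forces $\mu_2$ or $\mu_3$ into $\Deck^\ast(f^2)$, and then the relation $\mu_3=\mu_1\circ\mu_2$ together with $\mu_1\in\Deck(f^2)$ shows \emph{all} of $\Gamma$ lies in $\Deck(f^2)$, so $\Gamma\cap\Deck^\ast(f^3)=\varnothing$ and no third jump can occur. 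You instead abstract everything into the chain of subgroups $H_k=\Deck(f^k)\cap\Gamma$, observe that each jump of the deck groups is equivalent to a strict inclusion $H_{k-1}\subsetneq H_k$, and kill a hypothetical third jump by noting that a group of order at most $4$ admits no chain of four distinct subgroups (Lagrange). Your version is cleaner in that it needs no case analysis on which $\mu_i$ witnesses which jump and no use of the product relation inside $V_4$; it would work verbatim whenever the ambient symmetry group is small enough to bound subgroup-chain length. The paper's version buys a concrete structural fact that yours does not record: when three distinct deck groups occur, the entire group $\Gamma$ is already contained in $\Deck(f^2)$, which pinpoints exactly which transformations are responsible for each enlargement. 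One small point to make explicit if you write this up: the step from $H_{k-1}\subsetneq H_k$ chains to the contradiction does require Lagrange's theorem (strictly increasing \emph{cardinalities} $1<2<3<4$ alone would not be a contradiction), but your phrasing via subgroup-chain length handles this correctly.
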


\begin{proof}
If $\Deck(f) = \Deck(f^2)$ or $\Deck(f^2) = \Deck(f^3)$, then the result holds by Lemma~\ref{l:eventuallyconstant}. Thus we may assume that $\Deck(f) \subsetneq \Deck(f^2) \subsetneq \Deck(f^3)$. Since $\Deck(f) \subsetneq \Deck(f^2)$, it follows from Lemma~\ref{l:deckincreases} that, using the notation of  Lemma~\ref{l:fourMobius}, $\mu_1 \in \Deck^\ast(f)$. Similarly, since $\Deck(f^3) \neq \Deck(f^2)$, there exists $\mu \in \Deck^\ast(f^2)$ such that $\mu(\mathcal{V}_f) = \mathcal{V}_f$. Such a map must be either $\mu_2$ or $\mu_3$ from Lemma~\ref{l:fourMobius}.  But since $\mu_3 = \mu_1 \circ \mu_2$ and $\mu_2 = \mu_1 \circ \mu_3$, we see that $\mu_2 \in \Deck^\ast(f^2)$ if and only if $\mu_3 \in \Deck^\ast(f^2)$. However, this means that $\{ \mathrm{id}, \mu_1, \mu_2, \mu_3 \} \subseteq \Deck(f^2)$ and so $\Deck^\ast(f^3) \cap \{ \mathrm{id}, \mu_1, \mu_2, \mu_3 \} = \varnothing$. Thus $\Deck^\ast(f^3)$ does not contain a M\"obius transformation $\mu$ such that $\mu(\mathcal{V}_f) = \mathcal{V}_f$. But then Lemma~\ref{l:deckincreases} implies $\Deck(f^4) = \Deck(f^3)$, and so by Lemma~\ref{l:eventuallyconstant} we have $\Deck(f^k) = \Deck(f^3)$ for all $k \geq 3$.
\end{proof}

\section{Proofs of the Main Theorems}\label{s:proofs}

We are now ready to prove our main theorems.

\subsection{Proof of Theorem~\ref{t:odddegree}}

\begin{proof}[Proof of Theorem~\ref{t:odddegree}]
It is clear that if $f$ is a power map then $\Deck(f^k) \cong \Z_{d^k}$. Now suppose $f$ is not a power map, so that $|\mathcal{C}_f \cup \mathcal{V}_f| > 2$. If $|\mathcal{C}_f \cup \mathcal{V}_f| = 3$, then Lemma~\ref{l:CfcapVfequals3} asserts that $\Deck(f^k) \cong \Z_d$ for all $k$. If $|\mathcal{C}_f \cup \mathcal{V}_f| =4$ then we note that since $d$ is odd, $\Deck(f) \cong \Z_d$ cannot contain an element of order $2$. But this means none of the elements $\mu_i$, $i=1,2,3$ from Lemma~\ref{l:fourMobius} can belong to $\Deck(f)$, and so by Lemma~\ref{l:deckincreases} we have $\Deck(f^2) = \Deck(f) \cong \Z_d$. Thus by Lemma~\ref{l:eventuallyconstant}, we have $\Deck(f^k) = \Deck(f) \cong \Z_d$ for all $k \geq 1$.
\end{proof}

\subsection{Proof of Theorem~\ref{mthm}}

\begin{proof}[Proof of Theorem~\ref{mthm}]
 Again, we note that if $f$ is a power map, then $\Deck(f^k) \cong \Z_{d^k}$ for all $k$. If $f$ is not a power map, then by Lemmas~\ref{l:eventuallyconstant} and \ref{l:notpowerDeckf2}, then either $\Deck(f^2) \cong D_{2d}$ or $\Deck(f^k) \cong \Z_{d}$ for all $k \geq 1$.
 
 If $\Deck(f^2) \cong D_{2d}$, then by Theorem~\ref{t:cyclicOrDihedral} and Lemma~\ref{l:decksizes}, the only possibilities for $\Deck(f^3)$ (up to isomorphism) are $D_{4d}$ or $D_{2d}$. But by Proposition~\ref{p:Deckf3isenough}, the group $\Deck(f^k)$ cannot be larger than $\Deck(f^3)$, and this completes the proof.
\end{proof}

We conclude by showing that $D_{2d}$ and $D_{4d}$ are actually realized as $\Deck(f^k)$ for some bicritical rational map $f$ of even degree $d$. 

\begin{proposition}\label{p:examples}
 Let $d \geq 2$ be even.
 \begin{enumerate}
  \item If $f(z) = \frac{z^d-a}{z^d+a}$ for some $a \neq 0$ then $\Deck(f^2) \cong D_{2d}$.
  \item If $g(z) = \frac{z^d-1}{z^d+1}$ then $\Deck(g^3) \cong D_{4d}$. 
 \end{enumerate}
\end{proposition}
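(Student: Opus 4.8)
The plan is to handle the two parts in order, exploiting the fact that the map $g$ in part (2) is precisely the map $f$ of part (1) with $a=1$, so part (1) immediately supplies $\Deck(g^2)\cong D_{2d}$. First I would record the data common to both maps. Since $f=\alpha\circ(z\mapsto z^d)$ for a M\"obius $\alpha$, its critical points are those of $z\mapsto z^d$, so $\mathcal{C}_f=\{0,\infty\}$, and evaluating gives $f(0)=-1$, $f(\infty)=1$, hence $\mathcal{V}_f=\{1,-1\}$. In particular $\mathcal{C}_f\neq\mathcal{V}_f$, so $f$ is not a power map. A direct check that $f(\zeta z)=f(z)$ for $\zeta=e^{2\pi i/d}$ shows $\Deck(f)=\langle\tau\rangle\cong\Z_d$ with $\tau(z)=\zeta z$, in agreement with Lemma~\ref{l:standardfacts}.

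For part (1) I would exhibit a non-identity element of $\Deck(f)$ preserving $\mathcal{V}_f$. As $d$ is even, $\mu_1:=\tau^{d/2}$ is the involution $z\mapsto -z$; it fixes $\mathcal{C}_f$ pointwise and swaps the two critical values $\pm1$, so $\mu_1(\mathcal{V}_f)=\mathcal{V}_f$. Applying Lemma~\ref{l:deckincreases} with $k=1$ (where $\Deck^\ast(f)=\Deck(f)\setminus\{\mathrm{id}\}$), the existence of such a $\mu_1$ forces $\Deck^\ast(f^2)\neq\varnothing$. Since $f$ is not a power map, Lemma~\ref{l:notpowerDeckf2} then gives $\Deck(f^2)\cong D_{2d}$, finishing part (1).

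For part (2) the crux is to produce an element of $\Deck^\ast(g^2)$ that preserves $\mathcal{V}_g$, so that Lemma~\ref{l:deckincreases} with $k=2$ yields $\Deck^\ast(g^3)\neq\varnothing$. The natural candidate is $\mu_2(z)=1/z$, which swaps $0$ and $\infty$ and fixes $\pm1$, so $\mu_2(\mathcal{V}_g)=\mathcal{V}_g$; moreover $\mu_2\notin\Deck(g)$ because every element of $\Deck(g)$ fixes $\mathcal{C}_g$ pointwise (Lemma~\ref{l:standardfacts}). It then remains to verify $\mu_2\in\Deck(g^2)$. This is the one genuinely computational point, and the hard part of the whole proposition: a short calculation gives $g(1/z)=-g(z)$, and since $d$ is even one has $g(-w)=g(w)$, whence $g^2(1/z)=g(-g(z))=g(g(z))=g^2(z)$. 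Thus $\mu_2\in\Deck^\ast(g^2)$ preserves $\mathcal{V}_g$, and we conclude $\Deck(g^3)\supsetneq\Deck(g^2)\cong D_{2d}$.

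Finally I would pin down the isomorphism type. Because $g$ is not a power map, Lemma~\ref{l:decksizes} bounds $|\Deck(g^3)|\le 2|\Deck(g^2)|=4d$, and the strict inclusion just obtained upgrades this to $|\Deck(g^3)|=4d$. Since $\Deck(g^3)$ contains the subgroup $\Deck(g^2)\cong D_{2d}$, which is non-cyclic for every even $d\ge2$, it cannot itself be cyclic; by Theorem~\ref{t:cyclicOrDihedral} it is therefore dihedral of order $4d$, i.e. $\Deck(g^3)\cong D_{4d}$. The only subtle identity in the argument is $g(1/z)=-g(z)$ combined with $g(-w)=g(w)$, which is exactly the mechanism letting $z\mapsto 1/z$ descend to a symmetry of $g^2$ without being one of $g$; everything else is bookkeeping with the cited lemmas.
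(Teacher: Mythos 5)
Your proof is correct and follows essentially the same route as the paper's: part (1) uses the involution $z\mapsto -z$ together with Lemmas~\ref{l:deckincreases} and \ref{l:notpowerDeckf2}, and part (2) uses the element $\phi(z)=1/z$ and the identity $g(1/z)=-g(z)$, followed by Lemmas~\ref{l:deckincreases} and \ref{l:decksizes}. The only cosmetic differences are that you verify $\phi\in\Deck(g^2)$ by direct composition where the paper cites Lemma~\ref{l:keypropconverse}, and that you spell out the final identification of $\Deck(g^3)$ as dihedral of order $4d$, which the paper leaves implicit.
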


\begin{proof}
As with Example~\ref{ex:critcoalconversefalse}, one could compute the groups $\Deck(f^2)$ and $\Deck(g^3)$ by hand, but we instead use some of our previously obtained results. We first note that since $\mathcal{C}_f = \mathcal{C}_g = \{ 0, \infty \}$ and $d$ is even, the involution $\mu(z) = -z$ belongs to $\Deck(f)$ and $\Deck(g)$.
 \begin{enumerate}
  \item It is clear that $\mathcal{V}_f = \{-1,1\}$, and since $\mu(\mathcal{V}_f) = \mathcal{V}_f$, it follows from Lemmas~\ref{l:deckincreases} and \ref{l:notpowerDeckf2} that $\Deck(f^2) \cong D_{2d}$.
  \item Let $\phi(z) = \frac{1}{z}$. Then a simple calculation yields
  \[
   g \circ \phi(z) = -\frac{z^d-1}{z^d + 1} = \mu \circ g(z).
  \]
Since from the first part we know $\mu \in \Deck(g)$, we see that by Lemma~\ref{l:keypropconverse} we must have $\phi \in \Deck(g^2)$. Furthermore, it is clear that $\phi(\mathcal{V}_g) = \mathcal{V}_g = \{ -1,1\}$ and so $\Deck^\ast(g^3) \neq \varnothing$ by Lemma~\ref{l:deckincreases}. But then $|\Deck(g^3)| = 2|\Deck(g^2)| $ by Lemma~\ref{l:decksizes} and so $\Deck(g^3) \cong D_{4d}$. \qedhere
 \end{enumerate}
\end{proof}

\subsection*{Acknowledgments} S. Koch was partially supported by NSF grant \#2104649.  K. Lindsey was partially supported by NSF grant \#1901247. There are no conflicts of interest associated with this funding.
\bibliography{Deckgroups}
\bibliographystyle{plain}

\end{document}